\theoremstyle{plain}
\newtheorem{pretheo}{Theorem}[section]
\newtheorem{preassu}[pretheo]{Assumption}
\newtheorem{precoro}[pretheo]{Corollary}
\newtheorem{predefi}[pretheo]{Definition}
\newtheorem{preexam}[pretheo]{Example}
\newtheorem{prelemm}[pretheo]{Lemma}
\newtheorem{preprop}[pretheo]{Proposition}
\newtheorem{prerema}[pretheo]{Remark}
\newenvironment{theo}{\begin{pretheo}}{\end{pretheo}}
\newenvironment{assu}{\begin{preassu}}{\end{preassu}}
\newenvironment{lemm}{\begin{prelemm}}{\end{prelemm}}
\newenvironment{rema}{\begin{prerema}\rm}{\end{prerema}}
\DeclareMathOperator{\dv}{div}
\DeclareMathOperator{\spp}{supp}
\newcommand{\intd}{\,d}
\newcommand{\jmp}[1]{\ensuremath{[\![#1]\!]}}
\newcommand{\lc}{{\rm loc}}
\newcommand{\pd}{\partial}
\newcommand{\wht}[1]{\widehat{#1}}
\newcommand{\wtd}[1]{\widetilde{#1}}
\newcommand{\Bf}{\mathbf{f}}
\newcommand{\Bg}{\mathbf{g}}
\newcommand{\Bn}{\mathbf{n}}
\newcommand{\Bu}{\mathbf{u}}
\newcommand{\Bv}{\mathbf{v}}
\newcommand{\BB}{\mathbf{B}}
\newcommand{\BC}{\mathbf{C}}
\newcommand{\BF}{\mathbf{F}}
\newcommand{\BI}{\mathbf{I}}
\newcommand{\BM}{\mathbf{M}}
\newcommand{\BN}{\mathbf{N}}
\newcommand{\BR}{\mathbf{R}}
\newcommand{\CA}{\mathcal{A}}
\newcommand{\CF}{\mathcal{F}}
\newcommand{\CG}{\mathcal{G}}
\newcommand{\CL}{\mathcal{L}}
\newcommand{\CR}{\mathcal{R}}
\newcommand{\CS}{\mathcal{S}}
\newcommand{\CT}{\mathcal{T}}
\newcommand{\Fp}{\mathfrak{p}}
\newcommand{\Fs}{\mathfrak{s}}
\newcommand{\SST}{\mathsf{T}}
\newcommand{\vps}{\varepsilon}
\newcommand{\vph}{\varphi}
\numberwithin{equation}{section} 
\begin{document}
\title[Elliptic problems associated with two-phase incompressible flows]
{Unique solvability of elliptic problems associated with two-phase incompressible flows in unbounded domains}

\author[Hirokazu Saito]{Hirokazu Saito}
\address{Faculty of Industrial Science and Technology,
Tokyo University of Science,
102-1 Tomino, Oshamambe-cho, Yamakoshi-gun,
Hokkaido 049-3514, Japan}
\email{hsaito@rs.tus.ac.jp}

\author[Xin Zhang]{Xin Zhang}
\address{Faculty of Science and Engineering,
Waseda University, Okubo 3-4-1, Shinjuku-ku,
Tokyo 169-8555, Japan}
\email{xinzhang@aoni.waseda.jp}

\subjclass[2010]{Primary: 35Q30; Secondary: 76D07.}

\keywords{Elliptic problem; Unbounded domain; Two-phase incompressible flow; Helmholtz-Weyl decomposition}

\thanks{
The work of the first author was supported by JSPS KAKENHI Grant Number JP17K14224,
and the second author was supported by the Top Global University Project.
}



\begin{abstract}
This paper shows the unique solvability of elliptic problems 
associated with two-phase incompressible flows,
which are governed by the two-phase Navier-Stokes equations,
in unbounded domains such as the whole space separated by a compact interface
and the whole space separated by a non-compact interface.
As a by-product of the unique solvability of elliptic problems,
we obtain the Helmholtz-Weyl decomposition for two-phase incompressible flows.
\end{abstract}

\maketitle


\section{Introduction and main results}
\subsection{Introduction}
Let $\Omega_+$ be a bounded domain in the $N$-dimensional Euclidean space  $\BR^N$,
$N\geq 2$, with boundary $\Sigma$,
and let $\Omega_-=\BR^N\setminus(\Omega_+\cup\Sigma)$.
Let us define
$\rho=\rho_+\mathds{1}_{\Omega_+}+\rho_-\mathds{1}_{\Omega_-}$
for positive constants $\rho_\pm$,
where $\mathds{1}_A$ is the indicator function of $A\subset\BR^N$.
We set for an open set $G$ of $\BR^N$ and for $q\in(1,\infty)$
\begin{equation*}
\wht H_q^1(G)=\{f\in L_{q,\lc}(\overline{G}) : \nabla f\in L_q(G)^N\}, 
\end{equation*}
and define a solenoidal space $J_q(\BR^N\setminus\Sigma)$
and a space $G_q(\BR^N\setminus\Sigma)$ as follows:
\begin{align*}
J_q(\BR^N\setminus\Sigma)
&=\{\Bu \in L_q(\BR^N\setminus\Sigma)^N : (\Bu,\nabla\vph)_{\BR^N\setminus\Sigma}=0
\text{ for any }\vph\in\wht H_{q'}^1(\BR^N)\}, \\
G_q(\BR^N\setminus\Sigma)
&=\{\Bv\in L_q(\BR^N\setminus\Sigma)^N : \Bv=\rho^{-1}\nabla\psi \text{ for some } \psi\in\wht H_q^1(\BR^N)\},
\end{align*}
where $q'=q/(q-1)$ and
$$
(\Bu,\nabla\vph)_{\BR^N\setminus\Sigma}=\int_{\BR^N\setminus\Sigma} \Bu(x)\cdot\nabla\vph(x)\intd x.
$$
Here the central dot denotes the scalar product of $\BR^N$.

Let $\Bf\in L_q(\BR^N\setminus\Sigma)^N$.
One of the purposes of this paper is to show the unique solvability of the following weak elliptic problem:
Find $u\in \wht H_q^1(\BR^N)$ such that
\begin{equation}\label{weakeq:exterior}
(\rho^{-1}\nabla u,\nabla \vph)_{\BR^N\setminus\Sigma}
=(\Bf,\nabla\vph)_{\BR^N\setminus\Sigma} \quad \text{for any $\vph\in\wht H_{q'}^1(\BR^N)$.}
\end{equation}
This weak elliptic problem arises from the study of two-phase incompressible flows
governed by the two-phase Navier-Stokes equations.
The momentum equation of the two-phase Navier-Stoke equations is linearized as 
$$
\rho\pd_t\Bu-\mu\Delta\Bu+\nabla\Fp= \Bg \quad \text{in $\BR^N\setminus\Sigma$,}
$$
where $\Bg$ is a given function and $\mu=\mu_+\mathds{1}_{\Omega_+}+\mu_-\mathds{1}_{\Omega_-}$
for positive constants $\mu_{\pm}$ describing the viscosity coefficients,
and then the unique solvability of \eqref{weakeq:exterior} enables us to eliminate 
the pressure $\Fp$ from the linearized equation. 
This elimination of pressure plays an important role in applications such as 
the generation of analytic $C_0$-semigroups, the maximal regularity, and 
the local and global solvability of the two-phase Navier-Stokes equations
 (cf. \cite{PruSim2016}, \cite{MarSai2017}, and \cite{SSZ2018}).
Another very important application of the unique solvability of \eqref{weakeq:exterior} is
a two-phase version of the Helmholtz-Weyl decomposition as follows:
\begin{equation}\label{eq:1-decomp}
L_q(\BR^N\setminus\Sigma)^N=J_q(\BR^N\setminus\Sigma)\oplus G_q(\BR^N\setminus\Sigma),
\end{equation}
where $\oplus$ denotes the direct sum.
Note that this decomposition is equivalent to the unique solvablility of \eqref{weakeq:exterior}.

Pr$\ddot{\rm u}$ss and Simonett \cite[Proposition 8.6.2]{PruSim2016} proved
the unique solvability of a weak elliptic problem associated with two-phase incompressible flows
in the case where $\Omega_\pm$ are both bounded domains,
while the case of unbounded domains is not very well known to the best of our knowledge.
This motivates us to study the unique solvability of \eqref{weakeq:exterior}.
Furthermore, as examples of unbounded domains with non-compact interface $\Sigma$,
we also treat the whole space with a flat interface 
and the whole space with a bent interface in the present paper.
The former is in Subsection \ref{subsec:whole1-weak} below,
while the latter is in Subsection \ref{subsec:whole2-weak} below.

At this point, we introduce a short history of one-phase case
for the unique solvability of weak elliptic problems and the Helmholtz-Weyl decomposition.

We first introduce the classical weak Neumann problem: Find $u\in\wht H_q^1(D)$ such that
\begin{equation}\label{eq:NP}
(\nabla u,\nabla\vph)_D =(\Bf,\nabla\vph)_D
\quad\text{for any $\vph \in \wht{H}_{q'}^1(D)$,}
\end{equation}
where $\Bf\in L_q(D)^N$ and $D$ is a domain in $\BR^N$.
It is well known that the unique solvability of \eqref{eq:NP} is equivalent to 
the following Helmholtz-Weyl decomposition:
\begin{equation}\label{eq:HW}
L_q(D)^N = L_{q,\sigma} (D) \oplus G_q(D),
\end{equation}
where $L_{q,\sigma} (D)$ and $G_q(D)$ are given by
\begin{align*}
&L_{q,\sigma} (D)
=\overline{C_{0,\sigma}^\infty(D)}^{\|\cdot\|_{L_q(D)}}, \quad
C_{0,\sigma}^\infty(D)=\{\Bu\in C_0^\infty(D)^N : \dv\Bu=0 \text{ in $D$}\}, \\
&G_q(D)
=\{\Bv\in L_q(D)^N : \Bv=\nabla\psi \text{ for some } \psi\in\wht H_q^1(D)\}.
\end{align*}

The investigation of \eqref{eq:HW} (or \eqref{eq:NP}) can be traced back to Weyl \cite{Weyl1940}.   
Although \eqref{eq:HW} holds for any domain $D$ in $\BR^N$ when $q=2$ (cf. e.g. \cite{Sohr2001}), 
the general $L_q$-framework is more involved. 
According to \cite{Solonnikov1977,FM1977,Miyakawa1982,SS1992,FS1994,Miya1994,FS1996,AS2003,GS2010},
we can conclude that \eqref{eq:HW} is valid for any $q\in(1,\infty)$
whenever $D$ is $\BR^N$ itself, the half space, 
a bounded or an exterior domain in $\BR^N$ with smooth boundary, 
a perturbed half space, a flat layer, an aperture domain, or a bounded convex domain.
One also knows in \cite{FMM1998} that 
\eqref{eq:HW} holds only when $3/2 -\vps < q  < 3+\vps$
for some $\vps = \vps (D)>0$,
assuming $D$ is a bounded Lipschitz domain in $\BR^N$.
Note that \eqref{eq:HW} may fail for some unbounded domain and $q\in(1,\infty)$, 
which is pointed out in \cite{MB1986}.
One however has a chance to obtain \eqref{eq:HW} for any $q\in(1,\infty)$
and for general unbounded domains, called uniform $C^1$ domains,
by introducing mixed $L_q$-spaces due to \cite{FKS2005,FKS2007}.

The classical \eqref{eq:HW} is widely used for one-phase problems with non-slip boundary condition.
On the other hand, 
to handle one-phase incompressible flows with a free surface,
we make use of the weak Dirichlet problem as follows:
Let $\Gamma$ be a connected component of the boundary of $D$ and 
\begin{equation*}
\wht H_{q,\Gamma}^1(D)=\{u\in \wht H_q^1(D): u=0 \text{ on $\Gamma$}\} \quad (1<q<\infty).
\end{equation*} 
One then says that the weak Dirichlet problem is uniquely solvable in $\wht H^1_{q,\Gamma}(D)$ 
if and only if for any $\Bf\in L_q(D)^N$ there is a unique solution $u\in\wht H_{q,\Gamma}^1(D)$ to
\begin{equation*}\label{eq:weak_op}
(\nabla u,\nabla\vph)_D=(\Bf,\nabla\vph)_D
\quad \text{for any $\vph \in \wht{H}_{q',\Gamma}^1(D)$}
\end{equation*}
and there holds the estimate: $\|\nabla u\|_{L_q(D)} \leq C \|\Bf\|_{L_q(D)}$
for some positive constant $C$ independent of $u$, $\Bf$, and $\vph$. 
In \cite{SS1996, Shi2013, PruSim2016},
it is proved that the weak Dirichlet problem is uniquely solvable in $\wht H^1_{q,\Gamma}(D)$ 
when $D$ is $\BR^N,$ the half space, a bounded or an exterior domain with smooth boundary
(cf. also \cite{Abels2006, Sai2018}).

Finally, we introduce the strong elliptic problem associated with \eqref{weakeq:exterior}.
For an open set $G$ of $\BR^N$ and $q\in(1,\infty)$, one sets
\begin{equation*}
\wht H_q^2(G)=\{f\in L_{1,\lc}(G) : \nabla f\in \wht H_q^1(G)^N\}.
\end{equation*}
The strong elliptic problem is then stated as follows:
Find $v_\pm\in\wht H_q^1(\Omega_\pm)\cap \wht H_q^2(\Omega_\pm)$ such that
\begin{equation}\label{exteq:1}
\left\{\begin{aligned}
\Delta v_\pm &= \dv\Bf_\pm && \text{in $\Omega_\pm$,} \\
\rho_+ v_+&=\rho_-v_- && \text{on $\Sigma$,} \\
\Bn\cdot \nabla ( v_+- v_-)
&=\Bn\cdot (\Bf_+-\Bf_-) && \text{on $\Sigma$,}
\end{aligned}\right.
\end{equation}
where $\Bn$ is a unit normal vector on $\Sigma$ pointing from $\Omega_+$ into $\Omega_-$.
Throughout this paper, $\Bn$ is seen as an $N$-vector of function defined on $\BR^N$
(cf. \cite[Corollary A.3]{SchadeShibata2015} and Assumption \ref{assu:1} below).
In this paper, we first prove the unique solvability of \eqref{exteq:1},
and then we prove the unique solvability of \eqref{weakeq:exterior} by using the result of the strong elliptic problem.
This approach is also applied to the problems with non-compact interfaces 
in Sections \ref{sec:whole1} and \ref{sec:whole2} below.

\medskip
\noindent
{\bf Notation.}
Let $G$ be an open set in $\BR^N$, 
and let $u=u(x)$ and $\Bv=\Bv(x)=(v_1(x),\dots,v_N(x))^\SST\footnote{$\BM^\SST$ denotes the transpose of $\BM$.} $
be respectively a scalar-valued function on $G$ and a vector-valued function on $G$,
where $x=(x_1,\dots,x_N)$.
Then, for $\pd_j=\pd/\pd x_j$,
\begin{equation*}
\nabla u = \left(\pd_1 u,\dots,\pd_N u\right)^\SST, \quad
\nabla \Bv =\{\pd_j v_k : j,k=1,\dots,N\}, \quad 
\end{equation*}
and also $\dv\Bv = \sum_{j=1}^{N}\pd_j v_k$.
Furthermore, for $\Bu=\Bu(x)=(u_1(x),\dots,u_N(x))^\SST$ and $v=v(x)$ defined on $G$,
\begin{equation*}
(u,v)_G=\int_G u(x) v(x) \intd x, \quad
(\Bu,\Bv)_G=\int_G \Bu(x)\cdot\Bv(x)\intd x
=\sum_{j=1}^N\int_G u_j(x) v_j(x)\intd x.
\end{equation*}

Let $X$ be a Banach space.
Then $X^m$, $m\geq 2$, denotes the $m$-product space of $X$,
while the norm of $X^m$ is usually denoted by $\|\cdot\|_X$ instead of $\|\cdot\|_{X^m}$
for the sake of simplicity.
For another Banach space $Y$, $\CL(X,Y)$ stands for the Banach space of all bounded linear operators
from $X$ to $Y$. In addition, $\CL(X)=\CL(X,X)$.

Let $p\geq 1$ or $p=\infty$, and let $q\in(1,\infty)$.
The Lebesgue spaces on $G$ are denoted by $L_p(G)$
with norm $\|\cdot\|_{L_p(G)}$, 
while the Sobolev spaces on $G$ are denoted by $H_p^n(G)$, $n\in\BN$, with norm $\|\cdot\|_{H_p^n(G)}$.
Set $H_p^0(G)=L_p(G)$ and $\BN_0=\BN\cup\{0\}$.
For any multi-index $\alpha=(\alpha_1,\dots,\alpha_N)\in\BN_0^N$,
\begin{equation*}
\pd^\alpha u =\pd_x^\alpha u= \frac{\pd^{|\alpha| }u(x) }{\pd x_1^{\alpha_1}\dots \pd x_N^{\alpha_N}}
\quad \text{ with $|\alpha|=\alpha_1+\dots+\alpha_N$.}
\end{equation*}

Let $s\in(0,\infty)\setminus\BN$, and then
$[s]$ stands for the largest integer less than $s$,
while $W_q^s(G)=\{f\in L_q(G): \|f\|_{W_q^s(G)}<\infty\}$ with
\begin{align*}
\|f\|_{W_q^s(G)}=\|f\|_{H_q^{[s]}(G)}+\sum_{|\alpha|=[s]}
\left(\int_G\int_G\frac{|\pd^\alpha f(x)-\pd^\alpha f(y)|^q}{|x-y|^{N+(s-[s])q}}\intd x dy \right)^{1/q}.
\end{align*}
In addition, 
$C_0^\infty(G)$ stands for
the set of all functions in $C^\infty(G)$ whose supports are compact subsets of $G$,
while for a domain $D$ of $\BR^N$
\begin{equation*}
E_q(D)=\{\Bf \in L_q(D)^N : \dv\Bf \in L_q(D)\}
\end{equation*}
endowed with $\|\Bf\|_{E_q(D)}=\|\Bf\|_{L_q(D)}+\|\dv\Bf\|_{L_q(D)}$.

\subsection{Main results}
We first introduce assumptions for $\Omega_\pm$.

\begin{assu}\label{assu:1}
\begin{enumerate}[{\rm (a)}]
\item
$r$ is a real number satisfying $r>N$.
\item
$\Omega_+$ is a bounded domain of $\BR^N$ with boundary $\Sigma$ of class $W_r^{2-1/r}$.
\item
$\Omega_-=\BR^N\setminus(\Omega_+\cup \Sigma)$.
\end{enumerate}
\end{assu}

Now we state our main result for the strong elliptic problem \eqref{exteq:1}.

\begin{theo}\label{theo:main}
Suppose that Assumption $\ref{assu:1}$ holds and $\rho_\pm$ are positive constants.
Let $q\in(1,\infty)$ and $q'=q/(q-1)$ with $\max(q,q')\leq r$.
\begin{enumerate}[$(1)$]
\item\label{theo:main-1}
{\bf Existence}.
Let $\Bf_\pm\in E_q(\Omega_\pm)$ with $\Bn\cdot\Bf_\pm\in H_q^1(\Omega_\pm)$.
Then the strong elliptic problem \eqref{exteq:1} admits solutions 
$v_\pm \in \wht H_q^1(\Omega_\pm)\cap \wht H_q^2(\Omega_\pm)$ satisfying
\begin{align}\label{theo:main-est1}
\|\nabla^2 v_\pm\|_{L_q(\Omega_\pm)}
&\leq C\sum_{\Fs\in\{+,-\}}\left(\|\Bf_\Fs\|_{E_q(\Omega_\Fs)}+\|\Bn\cdot\Bf_\Fs\|_{H_q^1(\Omega_\Fs)}\right), \\
\|\nabla v_\pm\|_{L_q(\Omega_\pm)}
&\leq C\sum_{\Fs\in\{+,-\}}\|\Bf_\Fs\|_{L_q(\Omega_\Fs)},
\end{align}
with some positive constant $C=C(N,q,r,\rho_+,\rho_-)$.
Additionally, if $\Bn\cdot(\Bf_+-\Bf_-)=0$ on $\Sigma$, then $v_\pm$ satisfy
\begin{equation}\label{theo:main-est2}
\|\nabla^2 v_\pm\|_{L_q(\Omega_\pm)}
\leq C\sum_{\Fs\in\{+,-\}}\|\Bf_\Fs\|_{E_q(\Omega_\Fs)}
\end{equation}
for some positive constant $C=C(N,q,r,\rho_+,\rho_-)$.
\item\label{theo:main-2}
{\bf Uniqueness}.
If $v_\pm\in \wht H_q^1(\Omega_\pm) \cap \wht H_q^2(\Omega_\pm)$ satisfies
\begin{equation*}
\Delta v_\pm=0\text{ in $\Omega_\pm$,} \quad
\rho_+v_+=\rho_-v_- \text{\ and \ } \Bn\cdot\nabla (v_+-v_-)=0 \text{ on $\Sigma$,}
\end{equation*}
then $v_\pm =\rho_\pm^{-1}c$ for some constant $c$.
\end{enumerate}
\end{theo}

For the weak elliptic problem \eqref{weakeq:exterior},
our main result reads as

\begin{theo}\label{theo:main2}
Suppose that Assumption $\ref{assu:1}$ holds
and $\rho=\rho_+\mathds{1}_{\Omega_+}+\rho_-\mathds{1}_{\Omega_-}$
for positive constants $\rho_\pm$.
Let $q\in(1,\infty)$ and $q'=q/(q-1)$ with $\max(q,q')\leq r$.
\begin{enumerate}[$(1)$]
\item
{\bf Existence}.
Let $\Bf\in L_q(\BR^N\setminus\Sigma)^N$.
Then the weak elliptic problem \eqref{weakeq:exterior} admits a solution $u\in\wht H_q^1(\BR^N)$ satisfying
the estimate:
$
\|\nabla u\|_{L_q(\BR^N)}\leq C\|\Bf\|_{L_q(\BR^N\setminus\Sigma)}
$
for some positive constant $C=C(N,q,r,\rho_+,\rho_-)$.
\item
{\bf Uniqueness}.
If $u\in\wht H_q^1(\BR^N)$ satisfies
$$
(\rho^{-1}\nabla u,\nabla \vph)_{\BR^N\setminus\Sigma} =0 \quad \text{for any $\vph\in \wht H_{q'}^1(\BR^N)$,}
$$
then $u=c$ for some constant $c$.
\end{enumerate}
\end{theo}

Furthermore, we have by Theorem \ref{theo:main2} the two-phase version of 
the Helmholtz-Weyl decomposition as follows:

\begin{theo}\label{theo:HW-decomp}
Suppose that the same assumption as in Theorem $\ref{theo:main2}$ holds.
Then the decomposition \eqref{eq:1-decomp} holds.
\end{theo}

This paper is organized as follows:
The next section introduces some function spaces and lemmas,
which are used in Section \ref{sec:whole1}.
Section \ref{sec:whole1} treats strong elliptic problems with and without resolvent parameter $\lambda$
and a weak elliptic problem in the whole space with a flat interface.
Section \ref{sec:whole2} treats strong and weak elliptic problems similar to Section \ref{sec:whole1}
in the whole space with a bent interface,
and proves the unique solvability of the problems by using results obtained in Section \ref{sec:whole1}.
In Section \ref{sec:bounded},
we first introduce the unique solvability of a strong elliptic problem with resolvent parameter $\lambda$ in a bounded domain,
which is proved by the standard localization technique together with a result given in Section \ref{sec:whole2}.
Next, we prove the unique solvability of the strong elliptic problem without $\lambda$
by using the result with $\lambda$ and the Riesz-Schauder theory. 
Section \ref{sec:exterior} proves our main results as stated above, i.e. Theorems \ref{theo:main} and \ref{theo:main2},
by the main result of Section \ref{sec:bounded} with a cut-off technique.

\section{Preliminaries}\label{sec:2}
Let us define
\begin{align*}
&F_q(\BR_\pm^N) 
=\{\Bf_\pm=(f_{\pm 1},\dots, f_{\pm N})^\SST \in E_q(\BR_\pm^N) : f_{\pm N}\in H_q^1(\BR_\pm^N)\}, \\
&\|\Bf_\pm\|_{F_q(\BR_\pm^N)}
=\|\Bf_\pm\|_{E_q(\BR_\pm^N)}+\|f_{\pm N}\|_{H_q^1(\BR_\pm^N)},
\end{align*}
where $\BR_\pm^N$ are half spaces given by
\begin{equation*}
\BR_\pm^N=\{x=(x',x_N) : x'=(x_1,\dots,x_{N-1})\in\BR^{N-1},\, \pm x_N>0\}.
\end{equation*}
The following lemma is proved in \cite{Shibata-book} (cf. also \cite[Theorem III.2.1]{Galdi11}).

\begin{lemm}\label{lemm:approx1}
Let $q\in (1,\infty)$. 
Then, for any $\Bf_\pm\in F_q(\BR_\pm^N)$,
there exists a sequence $\{\Bf_\pm^{(j)}\}_{j=1}^\infty\subset C_0^\infty(\BR^N)^N$ such that
$\lim_{j\to\infty}\|\Bf_\pm^{(j)}-\Bf_\pm\|_{F_q(\BR_\pm^N)}=0$.
\end{lemm}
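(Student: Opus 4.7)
The plan is the standard extension--mollification--truncation density argument. I treat $\Bf_+$ only; the case of $\Bf_-$ is analogous. The task is to simultaneously approximate the three ingredients of the $F_q(\BR_+^N)$-norm: $\Bf_+$ in $L_q$, $\dv\Bf_+$ in $L_q$, and $f_{+N}$ in $H_q^1$.

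First, I would construct a global extension $\wtd\Bf=(\wtd f_1,\dots,\wtd f_N)^\SST$ on $\BR^N$ with $\wtd\Bf|_{\BR_+^N}=\Bf_+$, $\dv\wtd\Bf\in L_q(\BR^N)$, and $\wtd f_N\in H_q^1(\BR^N)$. Set $\wtd f_N(x',x_N):=f_{+N}(x',|x_N|)$ (even reflection, which is a bounded extension $H_q^1(\BR_+^N)\to H_q^1(\BR^N)$), and for $j<N$ set $\wtd f_j:=f_{+j}\mathds{1}_{\BR_+^N}$ (zero extension). To check $\dv\wtd\Bf\in L_q(\BR^N)$, observe that each $\pd_j$ with $j<N$ is tangential to $\{x_N=0\}$, so extending $f_{+j}$ by zero produces no boundary distribution; a brief integration-by-parts argument (testing against $\vph\in C_0^\infty(\BR^N)$ multiplied by a cutoff that removes a shrinking neighborhood of $\{x_N=0\}$) gives
$$\sum_{j<N}\pd_j\wtd f_j=\mathds{1}_{\BR_+^N}\bigl(\dv\Bf_+-\pd_N f_{+N}\bigr)\in L_q(\BR^N).$$
Since additionally $\pd_N\wtd f_N\in L_q(\BR^N)$ with no delta at $\{x_N=0\}$ (the even reflection makes $\wtd f_N$ continuous across the interface), one obtains $\dv\wtd\Bf\in L_q(\BR^N)$.

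Next, I would mollify and truncate: for a standard mollifier $\eta_\vps$ and a cutoff $\chi_R(x):=\chi(x/R)$ with $\chi\in C_0^\infty(\BR^N)$, $\chi\equiv 1$ near the origin, set $\Bf^{(\vps,R)}:=\chi_R\,(\eta_\vps*\wtd\Bf)\in C_0^\infty(\BR^N)^N$. Standard mollifier facts give, as $\vps\to 0$, $\eta_\vps*\wtd\Bf\to\wtd\Bf$ in $L_q(\BR^N)^N$, $\dv(\eta_\vps*\wtd\Bf)=\eta_\vps*\dv\wtd\Bf\to\dv\wtd\Bf$ in $L_q(\BR^N)$, and $\eta_\vps*\wtd f_N\to\wtd f_N$ in $H_q^1(\BR^N)$. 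Multiplying by $\chi_R$ introduces remainder terms of the form $\nabla\chi_R\cdot(\eta_\vps*\wtd\Bf)$ in the divergence (and analogous terms in the $H_q^1$-norm of the last component), all of which vanish in $L_q$ as $R\to\infty$ by dominated convergence, using $\|\nabla\chi_R\|_{L_\infty}=O(1/R)$. Choosing a diagonal subsequence $\vps_j\to 0$, $R_j\to\infty$ and restricting to $\BR_+^N$ yields the required approximation.

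The crux of the argument is the extension step. A naive zero extension of every component would create a boundary distribution $f_{+N}(\cdot,0)\,\delta_{\{x_N=0\}}$ in $\dv\wtd\Bf$, since the normal trace of $\Bf_+$ at the interface need not vanish; this would destroy the $L_q$-regularity of the divergence. Using an $H_q^1$-extension for the normal component precisely removes this obstruction by making the two one-sided normal traces agree.
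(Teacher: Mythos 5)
Your proof is correct and follows the standard extension--mollification--truncation route that the cited references (Shibata's book, Galdi Theorem III.2.1) would take; the paper itself gives no proof, only these citations. The essential point---that zero-extending the tangential components produces no boundary distribution (tangential derivatives commute with the normal cutoff) while the normal component must be extended in an $H_q^1$-preserving way to prevent a delta layer $f_{+N}(\cdot,0)\,\delta_{\{x_N=0\}}$ in $\dv\wtd\Bf$---is exactly right and is correctly justified via the shrinking-cutoff integration by parts. For comparison, the paper's related constructions (cf.\ Lemma~\ref{lemm:whole-lambda}, where $\BF_\pm=(E_\pm^o f_{\pm 1},\dots,E_\pm^o f_{\pm N-1},E_\pm^e f_{\pm N})^\SST$ so that $\dv\BF_\pm=E_\pm^o\dv\Bf_\pm$) use \emph{odd} extension for the tangential components rather than your zero extension; both choices keep the divergence in $L_q(\BR^N)$, and your choice is the marginally simpler one since no regularity of the individual $f_{+j}$, $j<N$, is assumed.
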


For positive constants $\rho_\pm$, we set
\begin{equation}\label{defi:A}
A_\pm = \sqrt{\rho_\pm\lambda+|\xi'|^2}, \quad
(\xi',\lambda)\in \BR^{N-1} \times (\BC\setminus (-\infty,0]),
\end{equation}
where we have chosen a branch cut along the negative real axis and a branch of the square root so that $\Re\sqrt{z}>0$
for $z\in\BC\setminus(-\infty,0]$.
In addition, we set
\begin{align*}
\Sigma_{\sigma,\lambda_0}=\{z\in\BC : |\arg z|< \pi-\sigma,\, |\lambda|> \lambda_0\}
\quad (0<\sigma<\pi/2,\,\lambda_0\geq 0).
\end{align*}
Then we have

\begin{lemm}\label{lemm:symbol1}
Let $s\in\BR$, $\alpha'\in\BN_0^{N-1}$, and $\xi'\in\BR^{N-1}$ 
\begin{enumerate}[$(1)$]
\item
Let $\sigma\in(0,\pi/2)$ and $\lambda\in\Sigma_{\sigma,0}$.
Then 
\begin{alignat*}{2}
|\pd_{\xi'}^{\alpha'} A_\pm^s|
&\leq C(|\lambda|^{1/2}+|\xi'|)^{s-|\alpha'|} && \quad (\xi'\in\BR^{N-1}\setminus\{0\}), \\
|\pd_{\xi'}^{\alpha'}(\rho_+ A_-+\rho_- A_+)^s|
&\leq C(|\lambda|^{1/2}+|\xi'|)^{s-|\alpha'|} && \quad (\xi'\in\BR^{N-1}\setminus\{0\}),
\end{alignat*}
where $C=C(N,s,\alpha',\sigma,\rho_+,\rho_-)$ is a positive constant.
\item
There exists a positive constant $C=C(N,s,\alpha')$ such that
\begin{equation*}
|\pd_{\xi'}^{\alpha'}|\xi'|^s|\leq C|\xi'|^{s-|\alpha'|}. 
\end{equation*}
\end{enumerate}
\end{lemm}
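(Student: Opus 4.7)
The plan rests on two ingredients: the two-sided bound $|A_\pm|\asymp |\lambda|^{1/2}+|\xi'|$ uniformly in $\lambda\in\Sigma_{\sigma,0}$ and $\xi'$, combined with an induction on $|\alpha'|$ built from the identity $\pd_{\xi_j}A_\pm = \xi_j A_\pm^{-1}$ (and analogously $\pd_{\xi_j}|\xi'|=\xi_j|\xi'|^{-1}$). Part (2) is the specialization $\lambda=0$, so I would focus on part (1) and remark that the same induction applies to $|\xi'|^s$.

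First I would establish the pointwise bound
\begin{equation*}
c(|\lambda|^{1/2}+|\xi'|) \leq |A_\pm(\xi',\lambda)| \leq C(|\lambda|^{1/2}+|\xi'|),
\end{equation*}
with constants depending only on $\sigma$ and $\rho_\pm$. The upper bound follows from $|A_\pm|^2=|\rho_\pm\lambda+|\xi'|^2|\leq \rho_\pm|\lambda|+|\xi'|^2$. For the lower bound, since $\rho_\pm\lambda\in\Sigma_{\sigma,0}$ and $|\xi'|^2$ is real and non-negative, an elementary sector argument based on the inequality $t^2-2t\cos\sigma+1\geq (1-\cos\sigma)(1+t^2)$ (applied with $t=\rho_\pm|\lambda|/|\xi'|^2$) yields $|\rho_\pm\lambda+|\xi'|^2|\geq c_\sigma(\rho_\pm|\lambda|+|\xi'|^2)$; taking square roots gives the claim. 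The branch choice $\Re\sqrt{z}>0$ further places $A_\pm$ inside $\{|\arg z|<(\pi-\sigma)/2\}$, so $\Re A_\pm\geq \sin(\sigma/2)|A_\pm|$, and consequently
\begin{equation*}
|\rho_+A_-+\rho_-A_+|\geq \min(\rho_+,\rho_-)\sin(\sigma/2)(|A_+|+|A_-|),
\end{equation*}
giving the same two-sided bound for $B:=\rho_+A_-+\rho_-A_+$.

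An induction on $|\alpha'|$ starting from $\pd_{\xi_j}A_\pm^t = t\xi_j A_\pm^{t-2}$ then produces the representation
\begin{equation*}
\pd_{\xi'}^{\alpha'} A_\pm^s = \sum_{|\beta|\leq |\alpha'|} c_{\alpha',\beta,s}\, \xi'^\beta A_\pm^{\,s-|\alpha'|-|\beta|},
\end{equation*}
and each summand is bounded by $|\xi'|^{|\beta|}(|\lambda|^{1/2}+|\xi'|)^{s-|\alpha'|-|\beta|}\leq C(|\lambda|^{1/2}+|\xi'|)^{s-|\alpha'|}$, the two-sided bound on $|A_\pm|$ absorbing either sign of the exponent. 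The same induction with $\lambda=0$ and $A_\pm$ replaced by $|\xi'|$ proves part (2). For $B^s$, I would apply Fa\`a di Bruno to $z\mapsto z^s$ composed with $B$: this reduces the task to estimating $|B|^{s-k}$ (handled by the lower bound on $|B|$) times products of derivatives $\pd_{\xi'}^{\gamma_i}A_\pm$ of orders $|\gamma_i|\geq 1$ with $\sum|\gamma_i|=|\alpha'|$ (handled by the $s=1$ case of the $A_\pm^s$ estimate already proved), and the homogeneity degrees sum to the advertised $s-|\alpha'|$. The only substantive step is the sector lower bound on $|A_\pm|$; everything else is bookkeeping by homogeneity.
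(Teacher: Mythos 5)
Your proposal is correct, and it follows the standard approach that the paper delegates to \cite[Lemma 5.2]{ShiShi12}: establish the two-sided comparison $|A_\pm|\asymp|\lambda|^{1/2}+|\xi'|$ on the sector (with the half-angle bound $\Re A_\pm\geq\sin(\sigma/2)|A_\pm|$ handling the combination $\rho_+A_-+\rho_-A_+$), and then propagate it through derivatives by homogeneity bookkeeping. Since the paper gives no proof of its own, there is nothing substantive to contrast: all the key ingredients you use are the ones a reader would expect. Two small remarks worth recording explicitly in a written-out version: (i) one should note that $\rho_\pm\lambda+|\xi'|^2\neq 0$ for $\lambda\in\Sigma_{\sigma,0}$ (the sector excludes the negative real axis and $|\lambda|>0$), so negative powers of $A_\pm$ are well-defined; and (ii) in the Fa\`a di Bruno step the inner derivatives are of $B=\rho_+A_-+\rho_-A_+$ rather than of $A_\pm$ individually, but since $B$ is a fixed linear combination the bound $|\pd_{\xi'}^{\gamma}B|\leq C(|\lambda|^{1/2}+|\xi'|)^{1-|\gamma|}$ follows at once from your $s=1$ estimate, exactly as you say. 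The verification of the elementary sector inequality $t^2-2t\cos\sigma+1\geq(1-\cos\sigma)(1+t^2)$ reduces to $\cos\sigma\,(t-1)^2\geq 0$, which holds for $\sigma\in(0,\pi/2)$, so that step is sound as well.
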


\begin{proof}
The proof is similar to \cite[Lemma 5.2]{ShiShi12}, so that the detailed proof may be omitted.
\end{proof}

For $a=(a_1,\dots,a_{N-1},a_N)$, we set $a'=(a_1,\dots,a_{N-1})$.
Then the partial Fourier transform of $f=f(x)$, $x=(x_1,\dots,x_N)$,
and its inverse transform are respectively defined by
\begin{align}\label{PFT}
&\wht f(x_N)=\wht f(\xi',x_N)
= \int_{\BR^{N-1}} e^{-ix'\cdot\xi'}f(x',x_N) \intd x', \\
&\CF_{\xi'}^{-1}[\wht f(\xi',x_N)](x')
= \frac{1}{(2\pi)^{N-1}}\int_{\BR^{N-1}}e^{ix'\cdot\xi'}\wht f(\xi',x_N) \intd \xi'. \notag
\end{align}
The following two lemmas are proved in \cite[Lemma 5.4]{ShiShi12}.

\begin{lemm}\label{lemm:tech-1}
Let $q\in(1,\infty)$ and $\sigma\in(0,\pi/2)$.
Assume that $k(\xi',\lambda)$ and $\ell(\xi',\lambda)$ are defined on $(\BR^{N-1}\setminus\{0\})\times\Sigma_{\sigma,0}$,
which are many times  differentiable with respect to $\xi'$,
and satisfy for any multi-index $\alpha'\in\BN_0^{N-1}$ and $(\xi',\lambda)\in(\BR^{N-1}\setminus\{0\})\times\Sigma_{\sigma,0}$
\begin{equation*}
|\pd_{\xi'}^{\alpha'}k(\xi',\lambda)|\leq c_1(\alpha')(|\lambda|^{1/2}+|\xi'|)^{-|\alpha'|}, \quad 
|\pd_{\xi'}^{\alpha'}\ell(\xi',\lambda)|\leq c_1(\alpha')|\xi'|^{-|\alpha'|},
\end{equation*}
where $c_1(\alpha')$ is a positive constant independent of $\xi'$ and $\lambda$.
Furthermore, define the operators $K_j(\lambda)$ and $L_j(\lambda)$ $(j=1,2, \lambda\in\Sigma_{\sigma,0})$ by the formulas:
\begin{align*}
[K_1(\lambda) f_\pm](x)
&=\int_0^\infty \CF_{\xi'}^{-1}\left[k(\xi',\lambda)\lambda^{1/2}
e^{-(A x_N+B y_N)}\wht f_\pm(\xi',\pm y_N)\right](x') \intd y_N, \\
[L_1(\lambda) f_\pm](x)
&=\int_0^\infty \CF_{\xi'}^{-1}\left[\ell(\xi',\lambda)|\xi'|
e^{-(Ax_N+B y_N)}\wht f_\pm(\xi',\pm y_N)\right](x') \intd y_N, 
\end{align*}
where $A,B\in\{A_+,A_-\}$ and $x=(x',x_N)\in\BR_+^N;$
\begin{align*}
[K_2(\lambda) f_\pm](x)
&=\int_0^\infty \CF_{\xi'}^{-1}\left[k(\xi',\lambda)\lambda^{1/2}
e^{-(-Ax_N+B y_N)}\wht f_\pm(\xi',\pm y_N)\right](x') \intd y_N, \\
[L_2(\lambda) f_\pm](x)
&=\int_0^\infty \CF_{\xi'}^{-1}\left[\ell(\xi',\lambda)|\xi'|
e^{-(-Ax_N+B y_N)}\wht f_\pm(\xi',\pm y_N)\right](x') \intd y_N, 
\end{align*}
where $A,B\in\{A_+,A_-\}$ and $x=(x',x_N)\in\BR_-^N$.
Then, 
\begin{equation*}
K_1(\lambda),L_1(\lambda)\in \CL(L_q(\BR_\pm^N),L_q(\BR_+^N)), \quad
K_2(\lambda),L_2(\lambda)\in\CL(L_q(\BR_\pm^N),L_q(\BR_-^N)),
\end{equation*}
and also their operator norms do not exceed some positive constant $C$
depending on $N$, $q$, $\sigma$, $\rho_+$, $\rho_-$, and $\max\{c_1(\alpha'): |\alpha'|\leq N+1\}$,
but independent of $\lambda$.
\end{lemm}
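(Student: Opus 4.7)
The plan is to prove boundedness of $K_1(\lambda)$ in detail; the remaining cases reduce to it by the observation that $|\xi'|\leq |\lambda|^{1/2}+|\xi'|$ (so replacing $\lambda^{1/2}$ by $|\xi'|$ in the symbol preserves the key estimate) and by the reflection $x_N\mapsto -x_N$ that sends $\BR_-^N$ to $\BR_+^N$ and turns $e^{Ax_N}$ into $e^{-Ax_N}$. At the heart of the argument is a standard combination of the Mikhlin--H\"ormander multiplier theorem on $\BR^{N-1}$ with Minkowski's integral inequality in $y_N$ and Hilbert's double integral inequality in $x_N$.

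First, I would record the sectorial estimate $\Re A_\pm\geq c_\sigma(|\lambda|^{1/2}+|\xi'|)$ for $\lambda\in\Sigma_{\sigma,0}$, which follows from the chosen branch of the square root together with $\rho_\pm>0$. Then, for the symbol
\begin{equation*}
m(\xi') := k(\xi',\lambda)\,\lambda^{1/2}\,e^{-(Ax_N+By_N)},
\end{equation*}
I would combine Lemma~\ref{lemm:symbol1} with the Fa\`a di Bruno formula applied to $e^{-Ax_N}$ and $e^{-By_N}$, using the elementary bound $t^k e^{-\eta t}\leq C_k\,\eta^{-k}$ (applied to half of each exponential decay factor) to absorb the factors $x_N^k$, $y_N^k$ produced when differentiating the exponentials. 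The outcome I aim for is
\begin{equation*}
|\pd_{\xi'}^{\alpha'} m(\xi')|\leq C_{\alpha'}\,(|\lambda|^{1/2}+|\xi'|)^{1-|\alpha'|}\,e^{-c(|\lambda|^{1/2}+|\xi'|)(x_N+y_N)}
\end{equation*}
uniformly in $\lambda\in\Sigma_{\sigma,0}$ and $x_N,y_N>0$.

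Next, multiplying through by $|\xi'|^{|\alpha'|}$ and using $s\,e^{-cs\tau}\leq C/\tau$ with $s=|\lambda|^{1/2}+|\xi'|$ and $\tau=x_N+y_N$, I would obtain $|\xi'|^{|\alpha'|}|\pd_{\xi'}^{\alpha'} m(\xi')|\leq C/(x_N+y_N)$ for all $|\alpha'|\leq N$, thereby verifying Mikhlin's condition on $\BR^{N-1}$ with constant $C/(x_N+y_N)$ uniformly in $\lambda$. The Mikhlin--H\"ormander theorem then yields, for each fixed $x_N,y_N>0$,
\begin{equation*}
\|\CF_{\xi'}^{-1}[m(\xi')\wht{g}(\xi')]\|_{L_q(\BR^{N-1})}\leq \frac{C}{x_N+y_N}\,\|g\|_{L_q(\BR^{N-1})}.
\end{equation*}

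Finally, I would apply Minkowski's inequality to pull the $L_q(\BR^{N-1})$-norm inside the $y_N$-integral and estimate the resulting $L_q(\BR_+;dx_N)$-norm of $\int_0^\infty \|f_\pm(\cdot,\pm y_N)\|_{L_q(\BR^{N-1})}/(x_N+y_N)\intd y_N$ by Hilbert's classical double integral inequality, concluding $\|K_1(\lambda)f_\pm\|_{L_q(\BR_+^N)}\leq C\|f_\pm\|_{L_q(\BR_\pm^N)}$ with $C$ independent of $\lambda$. The main obstacle I anticipate is the symbol estimate in the second step: one must verify that the polynomial growth of $\pd_{\xi'}^{\alpha'}A$ given by Lemma~\ref{lemm:symbol1}, together with the combinatorics of differentiating $e^{-Ax_N}$, conspire to produce exactly the $(|\lambda|^{1/2}+|\xi'|)^{1-|\alpha'|}$ factor that feeds into the $C/(x_N+y_N)$ Mikhlin constant. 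This is what allows the $\lambda^{1/2}$ prefactor to be ``converted'' into integrability in $y_N$ through Hilbert's inequality, and thus what secures the uniformity in $\lambda$ claimed in the lemma.
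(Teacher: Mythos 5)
The paper does not prove this lemma itself---it cites \cite[Lemma 5.4]{ShiShi12}---but your argument is correct and runs along the same standard route used there: the sectorial lower bound $\Re A_\pm\geq c_\sigma(|\lambda|^{1/2}+|\xi'|)$, the Mikhlin-type symbol estimate giving constant $C/(x_N+y_N)$ for each fixed $(x_N,y_N)$, and then Minkowski followed by Hilbert's double-integral inequality in the last variable. One small point worth tightening is the reduction for $L_j(\lambda)$: it is not literally a matter of interchanging $\lambda^{1/2}$ with $|\xi'|$, since $\ell$ only satisfies the weaker bound $|\pd_{\xi'}^{\alpha'}\ell|\leq c_1(\alpha')|\xi'|^{-|\alpha'|}$; the argument still goes through because $e^{-c(|\lambda|^{1/2}+|\xi'|)(x_N+y_N)}\leq e^{-c|\xi'|(x_N+y_N)}$, so that $|\xi'|^{|\alpha'|}\bigl|\pd_{\xi'}^{\alpha'}\bigl(\ell|\xi'|e^{-(Ax_N+By_N)}\bigr)\bigr|\leq C|\xi'|e^{-c|\xi'|(x_N+y_N)}\leq C'/(x_N+y_N)$ delivers the same Mikhlin constant.
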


\begin{lemm}\label{lemm:tech-2}
Let $q\in(1,\infty)$.
Assume that $m(\xi')$ is defined on $\BR^{N-1}\setminus\{0\}$,
which is many times differentiable with respect to $\xi'$,
and satisfies 
for any multi-index $\alpha'\in\BN_0^{N-1}$ and $\xi'\in\BR^{N-1}\setminus\{0\}$
$$
|\pd_{\xi'}^{\alpha'}m(\xi')|\leq c_2(\alpha')|\xi'|^{-|\alpha'|}, 
$$
where $c_2(\alpha')$ is a positive constant independent of $\xi'$.
Furthermore, define the operators $M_j$ $(j=1,2)$ by the formulas:
$$
[M_1 f_\pm](x)=\int_0^\infty \CF_{\xi'}^{-1}\left[m(\xi')|\xi'|e^{-|\xi'|(x_N+y_N)}\wht f_\pm(\xi',\pm y_N)\right](x')\intd y_N
$$
for $x=(x',x_N)\in\BR_+^N;$
$$
[M_2 f_\pm](x)=\int_0^\infty \CF_{\xi'}^{-1}\left[m(\xi')|\xi'|e^{-|\xi'|(-x_N+y_N)}\wht f_\pm(\xi',\pm y_N)\right](x')\intd y_N
$$
for $x=(x',x_N)\in\BR_-^N$. Then,
$$
M_1\in\CL(L_q(\BR_\pm^N),L_q(\BR_+^N)), \quad M_2\in\CL(L_q(\BR_\pm^N),L_q(\BR_-^N)),
$$ 
and also their operator norms do not exceed some positive constant $C$
depending on $N$, $q$, and $\max\{c_2(\alpha'): |\alpha'|\leq N+1\}$.
\end{lemm}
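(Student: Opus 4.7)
The plan is to treat $M_1$ only, since the boundedness of $M_2:L_q(\BR_\pm^N)\to L_q(\BR_-^N)$ follows verbatim after reflecting $x_N\mapsto -x_N$ in the outer normal variable. For each fixed $y_N>0$ the inner integrand is a tangential Fourier multiplier acting on $f_\pm(\cdot,\pm y_N)$, so the natural strategy is to apply a Mikhlin-type multiplier theorem on $\BR^{N-1}$ slice by slice in $x_N$, and then absorb the remaining $y_N$-integration by recognizing a Hilbert kernel $1/(x_N+y_N)$ on $L_q(\BR_+)$.

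For $t>0$ set $\mu_t(\xi'):=m(\xi')|\xi'|e^{-|\xi'|t}$. The central symbol estimate to establish is
\[
|\xi'|^{|\alpha'|}\bigl|\pd_{\xi'}^{\alpha'}\mu_t(\xi')\bigr|\leq C\,t^{-1}
\qquad \bigl(\xi'\in\BR^{N-1}\setminus\{0\},\ |\alpha'|\leq N+1\bigr),
\]
with $C$ depending only on $N$ and $\max\{c_2(\alpha'):|\alpha'|\leq N+1\}$. By the Leibniz rule together with the hypothesis $|\pd_{\xi'}^{\alpha'}m(\xi')|\leq c_2(\alpha')|\xi'|^{-|\alpha'|}$, the problem reduces to showing
\[
\bigl|\pd_{\xi'}^{\alpha'}\bigl(|\xi'|e^{-|\xi'|t}\bigr)\bigr|\leq C(\alpha')\,t^{-1}|\xi'|^{-|\alpha'|},
\]
which one verifies by induction on $|\alpha'|$, or equivalently by the rescaling $\eta'=t\xi'$ that reduces the question to the smoothness and rapid decay of the fixed profile $|\eta'|e^{-|\eta'|}$ on $\BR^{N-1}\setminus\{0\}$.

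With this symbol bound in hand, the Mikhlin multiplier theorem on $\BR^{N-1}$ yields, for fixed $x_N, y_N>0$,
\[
\bigl\|\CF_{\xi'}^{-1}[\mu_{x_N+y_N}(\xi')\wht f_\pm(\xi',\pm y_N)](\cdot)\bigr\|_{L_q(\BR^{N-1})}\leq \frac{C}{x_N+y_N}\,\|f_\pm(\cdot,\pm y_N)\|_{L_q(\BR^{N-1})}.
\]
Minkowski's integral inequality moves the $L_q(\BR^{N-1}_{x'})$-norm inside the $y_N$-integral, and then Hilbert's double-integral inequality
\[
\Bigl\|\int_0^\infty \frac{g(y)}{x+y}\,dy\Bigr\|_{L_q(\BR_+,dx)}\leq \frac{\pi}{\sin(\pi/q)}\,\|g\|_{L_q(\BR_+)}\qquad (1<q<\infty),
\]
applied in the $x_N$-variable, delivers $\|M_1 f_\pm\|_{L_q(\BR_+^N)}\leq C\,\|f_\pm\|_{L_q(\BR_\pm^N)}$. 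The main and essentially only obstacle is the uniform $C/t$ scaling of the Mikhlin norm of $\mu_t$; this is precisely what makes the exponential factor $e^{-|\xi'|t}$ productive in overcoming the borderline singularity of $1/(x_N+y_N)$ and securing $L_q$-boundedness across the full range $1<q<\infty$.
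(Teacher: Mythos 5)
Correct. The paper does not prove this lemma itself but cites Lemma 5.4 of Shibata and Shimizu \cite{ShiShi12}; your argument---the uniform Mikhlin bound $|\xi'|^{|\alpha'|}\,|\pd_{\xi'}^{\alpha'}(m(\xi')|\xi'|e^{-|\xi'|t})|\leq Ct^{-1}$ obtained by the rescaling $\eta'=t\xi'$, the resulting tangential operator-norm bound $C/(x_N+y_N)$ for each pair $(x_N,y_N)$, followed by Hilbert's double-integral inequality in the normal variable---is exactly the standard Fourier-multiplier proof of such boundary-layer estimates and matches the cited argument in its essentials.
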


We next prove

\begin{lemm}\label{lemm:multiplier}
Let $q\in(1,\infty)$ and $\sigma\in(0,\pi/2)$. Assume that $k(\xi',\lambda)$ and $\ell(\xi',\lambda)$
are defined on $(\BR^{N-1}\times\{0\})\times\Sigma_{\sigma,0}$,
which are many times differentiable with respect to $\xi'$,
and satisfy for any multi-index $\alpha'\in\BN_0^{N-1}$
and $(\xi',\lambda)\in(\BR^{N-1}\setminus\{0\})\times\Sigma_{\sigma,0}$
\begin{equation*}
|\pd_{\xi'}^{\alpha'}k(\xi',\lambda)|\leq c_3(\alpha')(|\lambda|^{1/2}+|\xi'|)^{-|\alpha'|}, \quad
|\pd_{\xi'}^{\alpha'}\ell(\xi',\lambda)|
\leq c_3(\alpha')(|\lambda|^{1/2}+|\xi'|)^{-1-|\alpha'|}
\end{equation*}
where $c_3(\alpha')$ is a positive constant independent of $\xi'$ and $\lambda$.
Furthermore, define the operators $K_j(\lambda)$ and $L_j(\lambda)$ $(j=1,2,\lambda\in\Sigma_{\sigma,0})$ by the formulas:
\begin{align*}
[K_1(\lambda)f_\pm](x)=\CF_{\xi'}^{-1}\left[k(\xi',\lambda) e^{-Ax_N}\wht f_\pm (\xi',0)\right](x'), \\
[L_1(\lambda)f_\pm](x)=\CF_{\xi'}^{-1}\left[\ell(\xi',\lambda) e^{-Ax_N}\wht f_\pm (\xi',0)\right](x'), 
\end{align*}
where $A\in\{A_+,A_-\}$ and $x=(x',x_N)\in\BR_+^N;$
\begin{align*}
[K_2(\lambda)f_\pm](x)=\CF_{\xi'}^{-1}\left[k(\xi',\lambda) e^{Ax_N}\wht f_\pm (\xi',0)\right](x'), \\
[L_2(\lambda)f_\pm](x)=\CF_{\xi'}^{-1}\left[\ell(\xi',\lambda) e^{Ax_N}\wht f_\pm (\xi',0)\right](x'), 
\end{align*}
where $A\in\{A_+,A_-\}$ and $x=(x',x_N)\in\BR_-^N$.
Then the following assertions hold.
\begin{enumerate}[$(1)$]
\item
$K_1(\lambda)\in\CL(H_q^1(\BR_\pm^N),H_q^1(\BR_+^N))$ with
\begin{equation}\label{0815:1_2019}
\|(\lambda^{1/2} K_1(\lambda)f_\pm,\nabla K_1(\lambda) f_\pm)\|_{L_q(\BR_+^N)} 
\leq C\|(\lambda^{1/2} f_\pm,\nabla f_\pm)\|_{L_q(\BR_\pm^N)},
\end{equation}
while $K_2(\lambda)\in\CL(H_q^1(\BR_\pm^N),H_q^1(\BR_-^N))$ with
\begin{equation}\label{0815:2_2019}
\|(\lambda^{1/2} K_2(\lambda)f_\pm,\nabla K_2(\lambda) f_\pm)\|_{L_q(\BR_-^N)} 
\leq C\|(\lambda^{1/2} f_\pm,\nabla f_\pm)\|_{L_q(\BR_\pm^N)}.
\end{equation}
In addition,
$K_1(\lambda)\in\CL(H_q^2(\BR_\pm^N),H_q^2(\BR_+^N))$ with
\begin{align}
&\|(\lambda K_1(\lambda)f_\pm,\lambda^{1/2}\nabla K_1(\lambda)f_\pm,\nabla^2 K_1(\lambda) f_\pm)\|_{L_q(\BR_+^N)}  \label{0815:3_2019}\\
&\leq C\|(\lambda f_\pm,\lambda^{1/2}\nabla f_\pm,\nabla^2 f_\pm)\|_{L_q(\BR_\pm^N)}, \notag
\end{align}
while $K_2(\lambda)\in \CL(H_q^2(\BR_\pm^N),H_q^2(\BR_-^N))$ with
\begin{align}
&\|(\lambda K_2(\lambda)f_\pm,\lambda^{1/2}\nabla K_2(\lambda)f_\pm,\nabla^2 K_2(\lambda) f_\pm)\|_{L_q(\BR_-^N)} \label{0815:4_2019} \\
&\leq C\|(\lambda f_\pm,\lambda^{1/2}\nabla f_\pm,\nabla^2 f_\pm)\|_{L_q(\BR_\pm^N)}. \notag
\end{align}
\item
$L_1(\lambda)\in\CL(H_q^1(\BR_\pm^N),H_q^2(\BR_+^N))$ with
\begin{align}
&\|(\lambda L_1(\lambda)f_\pm,\lambda^{1/2}\nabla L_1(\lambda)f_\pm,\nabla^2 L_1(\lambda) f_\pm)\|_{L_q(\BR_+^N)} \label{0815:5_2019} \\
&\leq C\|(\lambda^{1/2}f_\pm,\nabla f_\pm)\|_{L_q(\BR_\pm^N)},  \notag \\
&\|(\lambda^{1/2} L_1(\lambda)f_\pm,\nabla L_1(\lambda) f_\pm)\|_{L_q(\BR_+^N)} \label{0815:6_2019} \\
&\leq C\left(\|f_\pm\|_{L_q(\BR_\pm^N)}+ |\lambda|^{-1/2}\|\nabla f_\pm\|_{L_q(\BR_\pm^N)}\right),  \notag
\end{align}
while $L_2(\lambda)\in\CL(H_q^1(\BR_\pm^N),H_q^2(\BR_-^N))$ with
\begin{align}
&\|(\lambda L_2(\lambda)f_\pm,\lambda^{1/2}\nabla L_2(\lambda)f_\pm,\nabla^2 L_2(\lambda) f_\pm)\|_{L_q(\BR_-^N)} \label{0815:7_2019} \\
&\leq C\|(\lambda^{1/2}f_\pm,\nabla f_\pm)\|_{L_q(\BR_\pm^N)},  \notag \\
&\|(\lambda^{1/2} L_2(\lambda)f_\pm,\nabla L_2(\lambda) f_\pm)\|_{L_q(\BR_-^N)} \label{0815:8_2019} \\
&\leq C\left(\|f_\pm\|_{L_q(\BR_\pm^N)}+ |\lambda|^{-1/2}\|\nabla f_\pm\|_{L_q(\BR_\pm^N)}\right). \notag
\end{align}
\end{enumerate}
Here $C$ is a positive constant depending on $N$, $q$, $\sigma$, $\rho_+$, $\rho_-$, and
$\max\{c_3(\alpha'): |\alpha'|\leq N+1\}$, but independent of $\lambda$. 
\end{lemm}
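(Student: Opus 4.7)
The plan is to reduce Lemma \ref{lemm:multiplier} to Lemma \ref{lemm:tech-1} through a trace representation identity. For $f_\pm\in H_q^1(\BR_\pm^N)$ and $B\in\{A_+,A_-\}$, integration by parts on $[0,\infty)$ with weight $e^{-By_N}$, whose boundary term at $y_N=\infty$ vanishes because $\Re B>0$ for $\lambda\in\Sigma_{\sigma,0}$, yields
\[
\wht f_\pm(\xi',0) = B\int_0^\infty e^{-By_N}\wht f_\pm(\xi',\pm y_N)\intd y_N \mp \int_0^\infty e^{-By_N}\wht{\pd_N f_\pm}(\xi',\pm y_N)\intd y_N.
\]
Choosing $B=A_\pm$ and substituting this into the definitions of $K_j(\lambda)$ and $L_j(\lambda)$ converts them into $y_N$-integrals carrying the exponential $e^{-(Ax_N+A_\pm y_N)}$ required by Lemma \ref{lemm:tech-1}; the identity is first established for $f_\pm\in C_0^\infty(\BR^N)$ and extended to $H_q^1(\BR_\pm^N)$ by density.

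For the $K_j$ estimates \eqref{0815:1_2019}--\eqref{0815:4_2019}, the plan has three ingredients. First, I would differentiate $e^{-Ax_N}$ (which brings out factors of $A$ or $i\xi_j$) or multiply by $\lambda^{1/2}$, as dictated by the norm on the left-hand side, so that after the boundary substitution the integrand's symbol becomes a product of zeroth-order symbols and the first-order quantities $A_\pm,\,A,\,\lambda^{1/2},\,i\xi_j$. Next, I would split each such first-order factor using identities like
\[
A_\pm = \bigl[A_\pm/(|\lambda|^{1/2}+|\xi'|)\bigr]\,|\lambda|^{1/2} + \bigl[A_\pm/(|\lambda|^{1/2}+|\xi'|)\bigr]\,|\xi'|,
\]
where the bracketed quotient is zeroth order by Lemma \ref{lemm:symbol1} together with the Leibniz rule, and the analogous splits for $A$ and for $i\xi_j=(i\xi_j/|\xi'|)\,|\xi'|$; this recasts each piece as a sum of symbols of the form $\wtd k\,\lambda^{1/2}\,e^{-(\cdots)}$ or $\wtd\ell\,|\xi'|\,e^{-(\cdots)}$ with $\wtd k,\wtd\ell$ zeroth-order. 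Finally, any free $i\xi_j$ ($j<N$) that survives alongside $\wht f_\pm(\xi',\pm y_N)$ is transferred onto $f_\pm$ via $i\xi_j\wht f_\pm=\wht{\pd_j f_\pm}$, trading one order of the symbol for a tangential derivative on the right-hand side. Lemma \ref{lemm:tech-1} then delivers \eqref{0815:1_2019}; the $H_q^2$ cases \eqref{0815:3_2019} and \eqref{0815:4_2019} follow by differentiating twice and repeating this bookkeeping.

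For the $L_j$ estimates \eqref{0815:5_2019}--\eqref{0815:8_2019}, the symbol $\ell$ carries one extra $(|\lambda|^{1/2}+|\xi'|)^{-1}$ over $k$, which supplies exactly the additional order of smoothing needed to absorb the higher weights on the left-hand sides (such as $\lambda L_1(\lambda)f_\pm$ or $\lambda^{1/2}\nabla L_1(\lambda)f_\pm$). Concretely, after the same three-step reduction, each resulting symbol still fits the $\wtd k\,\lambda^{1/2}\,e^{-(\cdots)}$ or $\wtd\ell\,|\xi'|\,e^{-(\cdots)}$ template of Lemma \ref{lemm:tech-1}, with the corresponding Mihlin constant $c_1(\alpha')$ possibly proportional to $|\lambda|^{1/2}$; this extra $|\lambda|^{1/2}$ turns into the $\|\lambda^{1/2}f_\pm\|_{L_q}$ factor on the right. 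The second estimates \eqref{0815:6_2019} and \eqref{0815:8_2019} are handled similarly, except the split $(|\lambda|^{1/2}+|\xi'|)^{-1}=|\lambda|^{-1/2}\cdot|\lambda|^{1/2}/(|\lambda|^{1/2}+|\xi'|)$ is used to produce the prefactor $|\lambda|^{-1/2}\|\nabla f_\pm\|_{L_q}$ present on the right-hand side.

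The main obstacle is the bookkeeping: after each differentiation and every splitting, one must verify that the candidate zeroth-order symbol $\wtd k$ or $\wtd\ell$ satisfies the Mihlin-type estimate demanded by Lemma \ref{lemm:tech-1} uniformly in $\lambda\in\Sigma_{\sigma,0}$. This is a straightforward exercise in symbol calculus built on Lemma \ref{lemm:symbol1} and the Leibniz rule, but it requires careful accounting of which first-order factor ($\lambda^{1/2}$ or $|\xi'|$) is pulled out of each term so that the remainder fits the precise hypothesis required by Lemma \ref{lemm:tech-1}.
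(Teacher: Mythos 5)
The trace representation you begin with (integration by parts in $y_N$ with weight $e^{-By_N}$) is exactly the paper's starting identity \eqref{vole:1}, and the reduction to Lemma~\ref{lemm:tech-1} is the right target. But your key splitting step contains a genuine error. You propose $A_\pm = \bigl[A_\pm/(|\lambda|^{1/2}+|\xi'|)\bigr]\,|\lambda|^{1/2} + \bigl[A_\pm/(|\lambda|^{1/2}+|\xi'|)\bigr]\,|\xi'|$ and assert the bracketed quotient is ``zeroth order by Lemma~\ref{lemm:symbol1} together with the Leibniz rule.'' That is false in the sense needed. Lemma~\ref{lemm:symbol1} gives $(|\lambda|^{1/2}+|\xi'|)^{s-|\alpha'|}$--type bounds for $A_\pm^s$ and $|\xi'|^{s-|\alpha'|}$--type bounds for $|\xi'|^s$, but $(|\lambda|^{1/2}+|\xi'|)^{-1}$ itself is \emph{not} controlled by $(|\lambda|^{1/2}+|\xi'|)^{-1-|\alpha'|}$: its derivatives pick up $\pd_{\xi'}^{\alpha'}|\xi'|$ which behaves like $|\xi'|^{1-|\alpha'|}$, so $|\pd_{\xi'}^{\alpha'}(|\lambda|^{1/2}+|\xi'|)^{-1}|$ is only bounded by $C(|\lambda|^{1/2}+|\xi'|)^{-1}|\xi'|^{-|\alpha'|}$, which blows up relative to $(|\lambda|^{1/2}+|\xi'|)^{-1-|\alpha'|}$ as $|\xi'|\to 0$ for $|\alpha'|\ge 2$. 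Consequently $A_\pm/(|\lambda|^{1/2}+|\xi'|)$ is $\ell$--type but \emph{not} $k$--type in the terminology of Lemma~\ref{lemm:tech-1}. The first piece of your split, $\bigl[A_\pm/(|\lambda|^{1/2}+|\xi'|)\bigr]\,|\lambda|^{1/2}e^{-(\cdots)}$, therefore has the $\lambda^{1/2}$ prefix paired with a symbol that does not satisfy the hypothesis $|\pd_{\xi'}^{\alpha'}k|\le c_1(\alpha')(|\lambda|^{1/2}+|\xi'|)^{-|\alpha'|}$ required by the $K_1,K_2$ part of Lemma~\ref{lemm:tech-1}, and it cannot be re-cast as an $L_j$ kernel either because it lacks a $|\xi'|$ prefix. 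Since Lemma~\ref{lemm:tech-1} needs control up to $|\alpha'|=N+1\ge 3$, the obstruction is real, not a bookkeeping inconvenience.

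The paper avoids this by exploiting the exact algebra $A^2=R(A)\lambda+|\xi'|^2$, writing $A=A^2/A=\bigl(R(A)\lambda-\sum_j(i\xi_j)^2\bigr)/A$ and likewise $1=A^2/A^2$ (the identities~\eqref{eq1:20191113}). The crucial point is that the division is by the \emph{smooth, non-vanishing} symbol $A$ rather than by $|\lambda|^{1/2}+|\xi'|$. The $\lambda$-part becomes $R(A)\lambda^{1/2}\cdot\frac{\lambda^{1/2}}{A}$, and $\frac{\lambda^{1/2}}{A}$ is genuinely $k$--type: $|\pd_{\xi'}^{\alpha'}(\lambda^{1/2}/A)|\le C|\lambda|^{1/2}(|\lambda|^{1/2}+|\xi'|)^{-1-|\alpha'|}\le C(|\lambda|^{1/2}+|\xi'|)^{-|\alpha'|}$. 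The $(i\xi_j)^2$-part is peeled as $\frac{i\xi_j}{A|\xi'|}\cdot|\xi'|\cdot(i\xi_j)$ with $\frac{i\xi_j}{A|\xi'|}$ of $\ell$--type, $|\xi'|$ becoming the $L_j$ prefix, and the remaining $i\xi_j$ transferred to $\pd_j f_\pm$ as you describe. To repair your argument you would need to replace the ad hoc denominator $|\lambda|^{1/2}+|\xi'|$ with this $A^2/A$ decomposition (or something equivalent in which the denominator is a smooth first-order symbol); otherwise the application of Lemma~\ref{lemm:tech-1} does not go through.
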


\begin{proof}
(1). First, we prove \eqref{0815:1_2019}.
Let $x\in\BR_+^N$, and then note that
\begin{align}\label{vole:1}
e^{-Ax_N}\wht f_\pm(\xi',0)
&=-\int_0^\infty \frac{\pd}{\pd y_N}\left(e^{-A(x_N+y_N)}\wht f_\pm(\xi',\pm y_N)\right)\intd y_N \\
&=\int_0^\infty A e^{-A(x_N+y_N)}\wht{f_\pm}(\xi',\pm y_N)\intd y_N \notag \\
&\mp\int_0^\infty e^{-A(x_N+y_N)}\wht{\pd_N f_\pm}(\xi',\pm y_N)\intd y_N. \notag
\end{align}
By these relations, we have
\begin{align*}
[K_1(\lambda)f_\pm](x)
&=\int_0^\infty \CF_{\xi'}^{-1}\left[k(\xi',\lambda)A e^{-A(x_N+y_N)}\wht{f_\pm}(\xi',\pm y_N)\right](x')\intd y_N \\
&\mp\int_0^\infty \CF_{\xi'}^{-1}\left[k(\xi',\lambda) e^{-A(x_N+y_N)}\wht{\pd_N f_\pm}(\xi',\pm y_N)\right](x')\intd y_N \\
&=:I_\pm \mp J_\pm.
\end{align*}
Let $R(A)=\rho_+$ when $A=A_+$ and $R(A)=\rho_-$ when $A=A_-$,
which gives us the following formulas:
\begin{equation}\label{eq1:20191113}
A=\frac{A^2}{A}=\frac{1}{A}\left(R(A)\lambda -\sum_{j=1}^{N-1}(i\xi_j)^2\right), \quad
1=\frac{A^2}{A^2}=\frac{1}{A^2}\left(R(A)\lambda -\sum_{j=1}^{N-1}(i\xi_j)^2\right).
\end{equation}
By using these formulas, we can write $I_\pm$ and $J_\pm$ as 
\begin{align*}
I_\pm&=\int_0^\infty \CF_{\xi'}^{-1}\left[\frac{k R(A)}{A} 
\lambda^{1/2} e^{-A(x_N+y_N)}\wht{\lambda^{1/2} f_\pm}(\xi',\pm y_N)\right](x')\intd y_N \\
&-\sum_{j=1}^{N-1}\int_0^\infty \CF_{\xi'}^{-1}
\left[\frac{k i\xi_j}{A|\xi'|} |\xi'|e^{-A(x_N+y_N)}\wht{\pd_j f_\pm}(\xi',\pm y_N)\right](x')\intd y_N, \\
J_\pm
&=
\int_0^\infty \CF_{\xi'}^{-1}\left[\frac{k R(A)\lambda^{1/2}}{A^2} 
\lambda^{1/2} e^{-A(x_N+y_N)}\wht{\pd_N f_\pm}(\xi',\pm y_N)\right](x')\intd y_N \\
&-\sum_{j=1}^{N-1}\int_0^\infty \CF_{\xi'}^{-1}
\left[\frac{k (i\xi_j)^2}{A^2 |\xi'|} |\xi'|e^{-A(x_N+y_N)}\wht{\pd_N f_\pm}(\xi',\pm y_N)\right](x')\intd y_N,
\end{align*}
where $k=k(\xi',\lambda)$.
Set $\Xi=(i\xi_1,\dots,i\xi_{N-1},-A)^\SST$, and thus  
\begin{align*}
&(\lambda^{1/2},\nabla)I_\pm \\
&=\int_0^\infty \CF_{\xi'}^{-1}\left[\frac{(\lambda^{1/2},\Xi) k R(A)}{A} 
\lambda^{1/2} e^{-A(x_N+y_N)}\wht{\lambda^{1/2} f_\pm}(\xi',\pm y_N)\right](x')\intd y_N \\
&-\sum_{j=1}^{N-1}\int_0^\infty \CF_{\xi'}^{-1}
\left[\frac{(\lambda^{1/2},\Xi) k i\xi_j}{
A|\xi'|} |\xi'|e^{-A(x_N+y_N)}\wht{\pd_j f_\pm}(\xi',\pm y_N)\right](x') \intd y_N, \\
&(\lambda^{1/2},\nabla)J_\pm \\
&=\int_0^\infty \CF_{\xi'}^{-1}\left[\frac{(\lambda^{1/2},\Xi)k R(A)\lambda^{1/2}}{A^2}\lambda^{1/2}e^{-A(x_N+y_N)}
\wht{\pd_N f_\pm}(\xi',\pm y_N)\intd y_N\right](x')\intd y_N \\
&-\sum_{j=1}^{N-1}\int_0^\infty \CF_{\xi'}^{-1}\left[\frac{(\lambda^{1/2},\Xi)k (i\xi_j)^2}{A^2 |\xi'|}|\xi'|e^{-A(x_N+y_N)}
\wht{\pd_N f_\pm}(\xi',\pm y_N)\intd y_N\right](x')\intd y_N.
\end{align*}
By Lemma \ref{lemm:symbol1} and Leibniz's formula,
we have for any multi-index $\alpha'\in\BN_0^{N-1}$
\begin{alignat*}{2}
\left|\pd_{\xi'}^{\alpha'}\left\{\frac{(\lambda^{1/2},\Xi)k(\xi',\lambda)R(A)}{A}\right\}\right|
&\leq C|\xi'|^{-|\alpha'|} \quad && (\xi'\in\BR^{N-1}\setminus\{0\}), \\
\left|\pd_{\xi'}^{\alpha'}\left\{\frac{(\lambda^{1/2},\Xi)k(\xi',\lambda)}{A}\frac{i\xi_j}{|\xi'|}\right\}\right|
&\leq C|\xi'|^{-|\alpha'|} \quad && (\xi'\in\BR^{N-1}\setminus\{0\}), \\
\left|\pd_{\xi'}^{\alpha'}\left\{\frac{(\lambda^{1/2},\Xi)k(\xi',\lambda)R(A)\lambda^{1/2}}{A^2}\right\}\right|
&\leq C|\xi'|^{-|\alpha'|} \quad && (\xi'\in\BR^{N-1}\setminus\{0\}), \\
\left|\pd_{\xi'}^{\alpha'}\left\{\frac{(\lambda^{1/2},\Xi)k(\xi',\lambda)i\xi_j}{A^2} \frac{i\xi_j}{|\xi'|}\right\}\right|
&\leq C|\xi'|^{-|\alpha'|} \quad && (\xi'\in\BR^{N-1}\setminus\{0\}),
\end{alignat*}
with some positive constant $C=C(N,\alpha',\sigma,\rho_+,\rho_-)$ independent of $\xi'$ and $\lambda$.
Combining these properties with Lemma \ref{lemm:tech-1} furnishes
\begin{align*}
\|(\lambda^{1/2} I_\pm,\nabla I_\pm)\|_{L_q(\BR_+^N)}
\leq C\|(\lambda^{1/2}f_\pm,\nabla f_\pm)\|_{L_q(\BR_\pm^N)}, \\
\|(\lambda^{1/2} J_\pm,\nabla J_\pm)\|_{L_q(\BR_+^N)}
\leq C\|(\lambda^{1/2}f_\pm,\nabla f_\pm)\|_{L_q(\BR_\pm^N)}, 
\end{align*}
which implies \eqref{0815:1_2019} holds. 
Analogously, we can prove \eqref{0815:2_2019}-\eqref{0815:4_2019}. 
This completes the proof of (1).

(2). The estimates \eqref{0815:5_2019} and \eqref{0815:7_2019} follow respectively from
\eqref{0815:1_2019} and \eqref{0815:2_2019}.
In addition, \eqref{0815:6_2019} and \eqref{0815:8_2019} follow respectively from \eqref{0815:5_2019} and \eqref{0815:7_2019}.
This completes the proof of (2).
\end{proof}

Let ${\rm sign}(a)$ be the sign function of $a$, that is, 
${\rm sign}(a)=1$ when $a>0$, 
${\rm sign}(a)=-1$ when $a<0$, and ${\rm sign}(a)=0$ when $a=0$.  
Then we have

\begin{lemm}\label{lemm:residue}
Let $\xi'\in\BR^{N-1}\setminus\{0\}$ and $\vps>0$.
Then, for any $\lambda\in\BC\setminus(-\infty,0]$, 
\begin{equation}\label{residue:1}
\frac{1}{2\pi}\int_{-\infty}^\infty e^{-\vps|\xi|^2} \frac{i\xi_N e^{ia\xi_N}}{\rho_\pm\lambda+|\xi|^2}\intd\xi_N
=-\frac{{\rm sign}(a)}{2}e^{\vps\rho_\pm\lambda}e^{-A_\pm|a|},
\end{equation}
where $\xi=(\xi',\xi_N)$. In addition, 
\begin{equation}\label{residue:2}
\frac{1}{2\pi}\int_{-\infty}^\infty e^{-\vps|\xi|^2} \frac{i\xi_N e^{ia\xi_N}}{|\xi|^2}\intd\xi_N
=-\frac{{\rm sign}(a)}{2}e^{-|\xi'||a|}.
\end{equation}
\end{lemm}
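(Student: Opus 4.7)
The plan is to evaluate both integrals by the residue theorem, treating the integrand as a meromorphic function of complex $\xi_N$ with $\xi'$ held as a real parameter. For \eqref{residue:1} the denominator $\rho_\pm\lambda+|\xi|^2$ extends analytically to the polynomial $A_\pm^2+\xi_N^2=(\xi_N-iA_\pm)(\xi_N+iA_\pm)$, giving simple poles at $\xi_N=\pm iA_\pm$ that lie strictly off the real line because $\Re A_\pm>0$ for any $\lambda\in\BC\setminus(-\infty,0]$. The numerator $e^{-\vps(|\xi'|^2+\xi_N^2)}\,i\xi_N\,e^{ia\xi_N}$ extends to an entire function of $\xi_N$.

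For $a>0$ I would close the contour in the upper half-plane, enclosing only $\xi_N=iA_\pm$. A direct residue computation, using the key identity $A_\pm^2-|\xi'|^2=\rho_\pm\lambda$, gives
\[
\mathrm{Res}_{\xi_N=iA_\pm}\!\frac{e^{-\vps(|\xi'|^2+\xi_N^2)}\,i\xi_N\,e^{ia\xi_N}}{A_\pm^2+\xi_N^2}=\frac{e^{\vps\rho_\pm\lambda}\cdot i(iA_\pm)\cdot e^{-aA_\pm}}{2iA_\pm}=-\frac{e^{\vps\rho_\pm\lambda}e^{-aA_\pm}}{2i}.
\]
The prefactor $e^{\vps\rho_\pm\lambda}$ appearing in the stated right-hand side is exactly the value of the analytically continued Gaussian $e^{-\vps(|\xi'|^2+\xi_N^2)}$ at $\xi_N=iA_\pm$. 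Multiplying by $2\pi i$ and dividing by $2\pi$ reproduces $-\tfrac12 e^{\vps\rho_\pm\lambda}e^{-A_\pm a}$, matching \eqref{residue:1} for $a>0$. For $a<0$ I would close in the lower half-plane (clockwise orientation) and pick up the pole at $-iA_\pm$; the analogous computation yields $+\tfrac12 e^{\vps\rho_\pm\lambda}e^{A_\pm a}$, and the two cases merge into the stated $\mathrm{sign}(a)$-formula. Formula \eqref{residue:2} is then the special case $\rho_\pm\lambda=0$: the poles move to $\pm i|\xi'|$ and the residual Gaussian prefactor collapses to $e^{0}=1$, leaving $-\tfrac{\mathrm{sign}(a)}{2}e^{-|\xi'||a|}$.

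The step I expect to be the main obstacle is justifying the contour closure, because the analytically continued Gaussian $e^{-\vps(|\xi'|^2+\xi_N^2)}$ grows like $e^{\vps v^2}$ on $\Im\xi_N=v$, so the standard large-semicircle argument fails in either half-plane. I would instead work with a tall rectangular contour $[-R,R]+i[0,C]$ with $C>\Re A_\pm$: along the two vertical sides $\Re\xi_N=\pm R$ the factor $e^{-\vps R^2}$ dominates and forces the contribution to vanish as $R\to\infty$ for each fixed $C$. This reduces matters to comparing the real-axis integral with the integral on $\Im\xi_N=C$, whose difference equals $2\pi i$ times the residue at $iA_\pm$. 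Controlling the shifted-contour integral is the delicate part; choosing $C=|a|/(2\vps)$, so that completing the square in $\vps\xi_N^2-ia\xi_N$ produces the optimal Gaussian decay $e^{-a^2/(4\vps)}$, is the natural way to render that residual contribution negligible and deliver the stated formula.
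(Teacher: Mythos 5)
You correctly identify the obstruction: on the line $\Im\xi_N=v$ the analytically continued Gaussian behaves like $e^{\vps v^{2}}$, so no semicircular-arc estimate works, and the rectangular contour only yields a relation between the real-axis integral, the integral over the shifted line $\Im\xi_N=C$, and the residue at $iA_\pm$. Your proposed choice $C=|a|/(2\vps)$ does not make the shifted-line integral vanish; it only makes that integral of size $O(e^{-a^{2}/(4\vps)})$, which is a fixed nonzero quantity for each $\vps>0$, and no other admissible $C>\Re A_\pm$ does better. The residue alone therefore cannot equal the real-axis integral, and the "delicate" step you flag is not merely delicate but is the precise place where the argument breaks.

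In fact the identity cannot hold exactly for any fixed $\vps>0$. Since $e^{-\vps\xi_N^{2}}$ dominates $e^{ia\xi_N}$ for every complex $a$, the left-hand side of \eqref{residue:1} converges locally uniformly in $a\in\BC$ and is therefore an entire function of $a$; but the right-hand side $-\tfrac{\mathrm{sign}(a)}{2}\,e^{\vps\rho_\pm\lambda}e^{-A_\pm|a|}$ has a jump of $A_\pm^{2}e^{\vps\rho_\pm\lambda}\neq 0$ in its second $a$-derivative at $a=0$, so it is not analytic and the two sides cannot agree identically. Writing the integral as the Gaussian convolution of $a\mapsto-\tfrac{\mathrm{sign}(a)}{2}e^{-A_\pm|a|}$ (the $\vps=0$ inverse transform) shows that the exact value carries complementary-error-function corrections which collapse to the stated formula only in the limit $\vps\to 0^{+}$. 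The paper's own one-line proof ("follows from the residue theorem immediately") commits the same oversight; its downstream use is salvageable because the proof of Lemma \ref{lemm:whole-lambda} only ever takes the $\vps\to 0^{+}$ limit of \eqref{eq5-1:whole_0}. But as an identity for fixed $\vps>0$, Lemma \ref{lemm:residue} — and consequently your proof of it — is not correct, and the contour-closure gap you already sense cannot be patched without changing the statement.
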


\begin{proof}
These formulas follow from the residue theorem immediately,
so that the detailed proof may be omitted.
\end{proof}

\section{Problems in the whole space with flat interface}\label{sec:whole1}
Let us introduce the flat interface:
\begin{equation*}
\BR_0^N
=\{x=(x',x_N) : x'=(x_1,\dots,x_{N-1})\in\BR^{N-1},\,x_N=0\}.
\end{equation*}
This section mainly considers two strong elliptic problems as follows:
\begin{equation}\label{s-eq:2-bdd}
\left\{\begin{aligned}
\rho_\pm \lambda v_\pm- \Delta v_\pm &= -\dv\Bf_\pm + g_\pm && \text{in $\BR_\pm^N$,} \\
\rho_+ v_+&=\rho_- v_- && \text{on $\BR_0^N$,} \\
\pd_N v_+-\pd_N v_-
&=f_{+N}-f_{-N} + h_+-h_- && \text{on $\BR_0^N$,}
\end{aligned}\right.
\end{equation}
where $\lambda$ is the resolvent parameter varying in $\Sigma_{\sigma,0}$ $(0<\sigma<\pi/2)$, and 
\begin{equation}\label{s-eq:2-bdd_2}
\left\{\begin{aligned}
\Delta v_\pm &= \dv\Bf_\pm && \text{in $\BR_\pm^N$,} \\
\rho_+ v_+&=\rho_- v_- && \text{on $\BR_0^N$,} \\
\pd_N v_+ - \pd_N v_- 
 &=f_{+N}-f_{-N} && \text{on $\BR_0^N$.}
\end{aligned}\right.
\end{equation}
Throughout this section, we assume that $\rho_\pm$ are positive constants.
Concerning \eqref{s-eq:2-bdd} and \eqref{s-eq:2-bdd_2}, we prove the following theorems.

\begin{theo}\label{theo:2-bdd}
Let $\sigma\in(0,\pi/2)$ and $q\in(1,\infty)$.
Then, for any 
$$
\Bf_\pm=(f_{\pm 1},\dots,f_{\pm N})^\SST\in F_q(\BR_\pm^N), \quad
g_\pm \in L_q(\BR_\pm^N),  \quad h_\pm \in H_q^1(\BR_\pm^N)
$$
and for any $\lambda\in\Sigma_{\sigma,0}$,
the strong elliptic problem \eqref{s-eq:2-bdd} admits unique solutions
$v_\pm \in H_q^2(\BR_\pm^N)$.
In addition, the solutions $v_\pm$ satisfy
\begin{align}\label{est:1-whole}
&\sum_{\Fs\in\{+,-\}}\|(\lambda v_\Fs,\lambda^{1/2}\nabla v_\Fs,\nabla^2 v_\Fs)\|_{L_q(\BR_\Fs^N)} \\
&\leq C_1\sum_{\Fs\in\{+,-\}} 
\|(\dv\Bf_\Fs,g_\Fs,\lambda^{1/2} f_{\Fs N},\nabla f_{\Fs N}, \lambda^{1/2}h_\Fs,\nabla h_\Fs)\|_{L_q(\BR_\Fs^N)}, \notag
\end{align}
and also
\begin{align}\label{est:2-whole}
&\sum_{\Fs\in\{+,-\}}\|(\lambda^{1/2}v_\Fs ,\nabla v_\Fs)\|_{L_q(\BR_\Fs^N)} 
\leq C_1 \sum_{\Fs\in\{+,-\}}
\Big(\|\Bf_\Fs\|_{L_q(\BR_\Fs^N)}  \\
&+|\lambda|^{-1/2}\|g_\Fs\|_{L_q(\BR_\Fs^N)}  
 +\|h_\Fs\|_{L_q(\BR_\Fs^N)} 
+|\lambda|^{-1/2}\|\nabla h_\Fs\|_{L_q(\BR_\Fs^N)} \Big), \notag
\end{align}
where $C_1=C_1(N,q,\sigma,\rho_+,\rho_-)$ is a positive constant independent of $\lambda$.
Additionally, if $f_{+N}-f_{-N}=0$ on $\BR_0^N$, then $v_\pm$ satisfy
\begin{align}\label{est:1-whole-jumpzero}
&\sum_{\Fs\in\{+,-\}}\|(\lambda v_\Fs,\lambda^{1/2}\nabla v_\Fs,\nabla^2 v_\Fs)\|_{L_q(\BR_\Fs^N)} \\
&\leq C_1\sum_{\Fs\in\{+,-\}} 
\|(\dv\Bf_\Fs,g_\Fs,\lambda^{1/2}h_\Fs,\nabla h_\Fs)\|_{L_q(\BR_\Fs^N)}. \notag
\end{align}
\end{theo}

\begin{theo}\label{theo:2-bdd_2}
Let $q\in(1,\infty)$.
\begin{enumerate}[$(1)$]
\item {\bf Existence}. \label{theo:2-bdd_2-1}
Let $\Bf_\pm=(f_{\pm 1},\dots,f_{\pm N})^\SST \in F_q(\BR_\pm^N)$.
Then the strong problem \eqref{s-eq:2-bdd_2} admits solutions
$v_\pm \in \wht H_q^1(\BR_\pm ^N) \cap \wht H_q^2(\BR_\pm^N)$ satisfying
\begin{align}
\sum_{\Fs\in\{+,-\}}\|\nabla^2 v_\Fs \|_{L_q(\BR_\Fs^N)}
&\leq 
C_2\sum_{\Fs\in\{+,-\}}\|(\dv\Bf_\Fs, \nabla f_{\Fs N})\|_{L_q(\BR_\Fs^N)}, \label{0819:1_2019} \\
\sum_{\Fs\in\{+,-\}}\|\nabla v_\Fs \|_{L_q(\BR_\Fs^N)}
&\leq C_2\sum_{\Fs\in\{+,-\}}\|\Bf_\Fs\|_{L_q(\BR_\Fs^N)}, \label{0819:2_2019} 
\end{align}
with some positive constant $C_2=C_2(N,q,\rho_+,\rho_-)$.
Additionally, if $f_{+N}-f_{-N}=0$ on $\BR_0^N$, then $v_\pm$ satisfy
\begin{equation}\label{0824:1_2019}
\sum_{\Fs\in\{+,-\}}\|\nabla^2 v_\Fs \|_{L_q(\BR_\Fs^N)}
\leq 
C_2\sum_{\Fs\in\{+,-\}}\|\dv\Bf_\Fs\|_{L_q(\BR_\Fs^N)}.
\end{equation}
\item {\bf Uniqueness}. \label{theo:2-bdd_2-2}
If $v_\pm\in\wht H_q^1(\BR_\pm^N)\cap \wht H_q^2(\BR_\pm^N)$ satisfy
\begin{equation}\label{unique:flat}
\Delta v_\pm=0\text{ in $\BR_\pm^N$,} \quad
\rho_+v_+=\rho_-v_- \text{\ and \ } \pd_N v_+-\pd_N v_- =0 \text{ on $\BR_0^N$} 
\end{equation}
then $v_\pm =\rho_\pm^{-1}c$ for some constant $c$.
\end{enumerate}
\end{theo}

\subsection{Auxiliary problems}\label{subsec:whole1-auxi}
We start with 
\begin{equation}\label{eq1:whole-2}
\rho_\pm\lambda U_{\pm,\lambda}-\Delta U_{\pm,\lambda} = -\dv\Bf_\pm+ g_\pm \quad \text{in $\BR_\pm^N$.}
\end{equation}
For this problem, we prove

\begin{lemm}\label{lemm:whole-lambda}
Let $\sigma\in(0,\pi/2)$ and $q\in(1,\infty)$.
Then, for any $\lambda\in\Sigma_{\sigma,0}$,
$\Bf_\pm=(f_{\pm 1},\dots,f_{\pm N})^\SST\in C_0^\infty(\BR^N)^N$, 
and $g_\pm \in C_0^\infty(\BR_\pm^N)$,
there exist $U_{\pm,\lambda}\in H_q^2(\BR^N)$, satisfying \eqref{eq1:whole-2},
such that the following assertions hold.
\begin{enumerate}[$(1)$]
\item\label{lemm:whole-lambda-1}
There hold the estimates:
\begin{align*}
&\|(\lambda U_{\pm,\lambda},\lambda^{1/2}\nabla U_{\pm,\lambda},\nabla^2 U_{\pm,\lambda})\|_{L_q(\BR^N)} 
\leq C\|(\dv\Bf_\pm,g_\pm)\|_{L_q(\BR_\pm^N)}, \\
&\|(\lambda^{1/2}U_{\pm,\lambda},\nabla U_{\pm,\lambda})\|_{L_q(\BR^N)} 
\leq C\left(\|\Bf_\pm\|_{L_q(\BR_\pm^N)} +|\lambda|^{-1/2}\|g_\pm\|_{L_q(\BR_\pm^N)}\right),
\end{align*}
with a positive constant $C=C(N,q,\sigma,\rho_+,\rho_-)$ independent of $\lambda$. 
\item\label{lemm:whole-lambda-2}
The traces of $\pd_N U_{\pm,\lambda}$ on $\BR_0^N$ are given by
\begin{align*}
(\pd_N U_{\pm,\lambda})(x',0)
&=
f_{\pm N}(x',0)  
\mp \sum_{j=1}^{N-1}\int_0^\infty \CF_{\xi'}^{-1}\left[i\xi_j e^{-A_\pm y_N}\wht f_{\pm j}(\xi',\pm y_N)
\right](x')\intd y_N \notag \\
&- \int_0^\infty \CF_{\xi'}^{-1}\left[A_\pm e^{-A_\pm y_N}\wht f_{\pm N}(\xi',\pm y_N)
\right](x')\intd y_N,
\end{align*}
where $A_{\pm}$ are defined as \eqref{defi:A} and
the symbols $\CF_{\xi'}^{-1}$ and $\wht{\,\cdot\,}$ are defined as \eqref{PFT}. 
\end{enumerate}
\end{lemm}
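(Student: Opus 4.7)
The plan is to construct $U_{\pm,\lambda}$ by Fourier multiplier methods on $\BR^N$, exploiting the freedom to extend the data from $\BR_\pm^N$ to $\BR^N$ in a way tailored to reproduce the trace formula of part~(2). For each sign, extend $f_{\pm j}$ $(1\leq j\leq N-1)$ to $\BR^N$ by odd reflection across $\BR_0^N$, and extend $f_{\pm N}$ and $g_\pm$ by even reflection; denote the extensions $\tilde\Bf_\pm$ and $\tilde g_\pm$. Because $\tilde f_{\pm N}$ is continuous across $\BR_0^N$ and the tangential $\tilde f_{\pm j}$ are only differentiated in directions parallel to $\BR_0^N$ when forming $\dv\tilde\Bf_\pm$, no singular surface term appears, so $\dv\tilde\Bf_\pm \in L_q(\BR^N)$ with $\|\dv\tilde\Bf_\pm\|_{L_q(\BR^N)} \leq C\|\dv\Bf_\pm\|_{L_q(\BR_\pm^N)}$, and similarly $\|\tilde g_\pm\|_{L_q(\BR^N)} \leq C\|g_\pm\|_{L_q(\BR_\pm^N)}$. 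I would then define
\begin{equation*}
U_{\pm,\lambda}(x) = \CF_\xi^{-1}\!\left[\frac{-i\xi\cdot\wht{\tilde\Bf_\pm}(\xi) + \wht{\tilde g_\pm}(\xi)}{\rho_\pm\lambda + |\xi|^2}\right](x),
\end{equation*}
so that $(\rho_\pm\lambda - \Delta)U_{\pm,\lambda} = -\dv\tilde\Bf_\pm + \tilde g_\pm$ on $\BR^N$, which restricts to \eqref{eq1:whole-2} on $\BR_\pm^N$.

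For the estimates in~(1), I would invoke the Mikhlin multiplier theorem for the symbols
\begin{equation*}
\frac{\lambda}{\rho_\pm\lambda+|\xi|^2},\quad \frac{\lambda^{1/2}\xi_k}{\rho_\pm\lambda+|\xi|^2},\quad \frac{\xi_k\xi_\ell}{\rho_\pm\lambda+|\xi|^2},
\end{equation*}
which are bounded uniformly in $\lambda\in\Sigma_{\sigma,0}$, together with
\begin{equation*}
\frac{\xi_k}{\rho_\pm\lambda+|\xi|^2},\qquad \frac{\lambda^{1/2}}{\rho_\pm\lambda+|\xi|^2},
\end{equation*}
whose Mikhlin-type norms are $O(|\lambda|^{-1/2})$. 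Expressing $\lambda U_{\pm,\lambda}$, $\lambda^{1/2}\nabla U_{\pm,\lambda}$, and $\nabla^2 U_{\pm,\lambda}$ (respectively, for the second estimate, $\lambda^{1/2}U_{\pm,\lambda}$ and $\nabla U_{\pm,\lambda}$) as the action of the appropriate combinations of these multipliers on $\tilde\Bf_\pm$ and $\tilde g_\pm$, and then using the extension bounds to pass back to norms on $\BR_\pm^N$, yields both stated inequalities.

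For the trace formula in~(2), take the partial Fourier transform in $x'$ and evaluate $\pd_N U_{\pm,\lambda}$ at $x_N=0$; this produces a single integral over $\xi_N$ that I would compute using the residue identity \eqref{residue:1} of Lemma~\ref{lemm:residue}. The $\tilde g_\pm$ contribution vanishes: by even reflection, $\wht{\tilde g_\pm}(\xi',\,\cdot\,)$ is even in $\xi_N$, so $i\xi_N\wht{\tilde g_\pm}/(\rho_\pm\lambda+|\xi|^2)$ is odd in $\xi_N$. For the $\tilde\Bf_\pm$ contribution, I would split off the $k=N$ term using $\xi_N^2=(\rho_\pm\lambda+|\xi|^2)-A_\pm^2$ to extract the boundary value $\tilde f_{\pm N}(x',0)=f_{\pm N}(x',0)$, and evaluate the remaining $\xi_N$-integrals (the $k=N$ remainder and the $k<N$ terms) by the residue identity. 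The chosen parities then cause the $y_N<0$ halves of the resulting integrals against $e^{-A_\pm|y_N|}$ either to double with or to cancel against the $y_N>0$ halves, collapsing the result into the stated half-line integrals involving only $\wht{f_{\pm j}}(\xi',\pm y_N)$.

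The delicate step is the trace calculation in~(2): tracking signs and parities so that (i)~the $\tilde g_\pm$ contribution vanishes exactly, (ii)~the spurious dependence on $\Bf_\pm$ outside $\BR_\pm^N$ introduced by the reflected extensions cancels, and (iii)~the resulting integrals assume the stated form with the $\mp$ prefactor on the tangential sum. The $L_q$-estimates in~(1) are more routine, following directly from Mikhlin-type bounds applied to the Fourier representation.
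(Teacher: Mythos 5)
Your construction matches the paper's almost exactly: the same odd reflections of the tangential components $f_{\pm j}$ and even reflections of $f_{\pm N}$ and $g_\pm$, the same Fourier-multiplier symbols, and the same residue computation for the trace of $\pd_N U_{\pm,\lambda}$. The only real difference is technical: the paper inserts a Gaussian regularizer $e^{-\vps|\xi|^2}$ into the multiplier so that the $\xi_N$-integrals in the residue computation converge absolutely (since the odd extension $E_\pm^o f_{\pm j}$ has a jump across $\BR_0^N$, its full Fourier transform decays only like $|\xi_N|^{-1}$), and then passes to the limit $\vps\to 0$; if you work without this regularization you must separately justify the residue evaluation, e.g.\ by the same limiting device. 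One small improvement in your argument: you kill the $g_\pm$ contribution to the trace by a direct parity argument (odd integrand in $\xi_N$), whereas the paper appeals indirectly to uniqueness of the whole-space resolvent problem.
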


\begin{proof}
Let us introduce the Fourier transform of $u=u(x)$ on $\BR^N$
and the inverse Fourier transform of $v=v(\xi)$ on $\BR^N$ as follows:
\begin{equation*}
\CF[u](\xi)=\int_{\BR^N} e^{-ix\cdot\xi} u(x)\intd x, \quad
\CF_{\xi}^{-1}[v](x) = \frac{1}{(2\pi)^N}\int_{\BR^N} e^{ix\cdot\xi} v(\xi) \intd \xi.
\end{equation*}

Let $u_\pm=u_\pm (x',x_N)$ be functions defined on $\BR_\pm^N$,
and define the odd extensions $E_\pm^o u_\pm$ of $u_\pm$
and the even extensions $E_\pm^e u_\pm$ of $u_\pm$ as follows:
\begin{align*}
(E_+^o u_+)(x)
&=\left\{\begin{aligned}
& u_+(x',x_N) && (x_N>0), \\
& -u_+(x',-x_N) && (x_N<0), 
\end{aligned}\right. \\
(E_-^o u_-)(x)
&=\left\{\begin{aligned}
& -u_-(x',-x_N) && (x_N>0), \\
& u_-(x',x_N) && (x_N<0), 
\end{aligned}\right. \\
(E_+^e u_+)(x)
&=\left\{\begin{aligned}
& u_+(x',x_N) && (x_N>0), \\
& u_+(x',-x_N) && (x_N<0), 
\end{aligned}\right. \\
(E_-^e u_-)(x)
&=\left\{\begin{aligned}
& u_-(x',-x_N) && (x_N>0), \\
& u_-(x',x_N) && (x_N<0).
\end{aligned}\right. 
\end{align*} 
In addition, we set
\begin{equation}\label{ext:F}
\BF_\pm = (F_{\pm 1},\dots,F_{\pm N-1},F_{\pm N})^\SST = \left(E_\pm^o f_{\pm 1},\dots,
E_\pm^o f_{\pm N-1}, E_\pm^e f_{\pm N}\right)^\SST.
\end{equation}
It then holds that
$$
\pd_j F_{\pm j} = E_\pm^o\pd_j f_{\pm j} \quad (j=1,\dots,N-1), \quad
\pd_N F_{\pm N}=E_\pm^o \pd_N f_{\pm N},
$$
which implies 
\begin{equation}\label{eq1-2:whole}
\dv\BF_\pm=E_\pm^oz_\pm \quad \text{for } z_\pm=\dv\Bf_\pm.
\end{equation}
We thus see that 
\begin{equation}\label{eq1:whole}
\CF[\dv \BF_\pm](\xi)=i\xi \cdot \CF[\BF_\pm](\xi)=\CF[E_\pm^oz_\pm](\xi).
\end{equation}

Let us define for $\vps>0$
\begin{equation*}
V_\pm^\vps(x)=-\CF_\xi^{-1}\left[e^{-\vps|\xi|^2}\frac{i\xi \cdot \CF[\BF_\pm](\xi)}{\rho_\pm\lambda+|\xi|^2}\right](x)
\end{equation*}
and 
\begin{equation}\label{0816:1_2019}
F_{\pm, \dv}^\vps=\CF_\xi^{-1}\left[e^{-\vps|\xi|^2}\CF[\dv\BF_\pm](\xi)\right](x).
\end{equation}
Then $V_\pm^\vps$ solve by \eqref{eq1:whole}
\begin{equation}\label{eq3:whole}
\rho_\pm\lambda V_\pm^\vps -\Delta V_\pm^\vps=-F_{\pm,\dv}^\vps \quad \text{ in $\BR^N$.}
\end{equation}
On the other hand, one has
\begin{align*}
\lambda^{1/2} V_\pm^\vps(x)
&=-\CF_\xi^{-1}\left[e^{-\vps|\xi|^2}\frac{\lambda^{1/2} i\xi \cdot \CF[\BF_\pm](\xi)}{\rho_\pm\lambda+|\xi|^2}\right](x), \\
\pd_j V_\pm^\vps (x)
&= \CF_\xi^{-1}\left[e^{-\vps|\xi|^2}\frac{\xi_j\xi \cdot \CF[\BF_\pm](\xi)}{\rho_\pm\lambda+|\xi|^2}\right](x) \quad (j=1,\dots,N), 
\end{align*}
and also by \eqref{eq1:whole}
\begin{align*}
\lambda V_\pm^\vps(x)
&=-\CF_\xi^{-1}\left[e^{-\vps|\xi|^2}\frac{\lambda}{\rho_\pm\lambda+|\xi|^2}\CF[\dv\BF_\pm](\xi)\right](x), \\
\lambda^{1/2}\pd_j V_\pm^\vps (x)
&=- \CF_\xi^{-1}\left[e^{-\vps|\xi|^2}\frac{\lambda^{1/2} i\xi_j}{\rho_\pm\lambda+|\xi|^2}\CF[\dv\BF_\pm](\xi)\right](x) \quad (j=1,\dots,N), \\
\pd_k\pd_l V_\pm^\vps (x) 
&=\CF_\xi^{-1}\left[e^{-\vps|\xi|^2}\frac{\xi_k\xi_l}{\rho_\pm\lambda+|\xi|^2}\CF[\dv\BF_\pm](\xi)\right](x) \quad (k,l=1,\dots,N).
\end{align*}
For $j,k,l=1,\dots,N$ and for any multi-index $\alpha\in\BN_0^N$,
\begin{align*}
\left|\pd_{\xi}^{\alpha} \left(\frac{\lambda}{\rho_\pm\lambda+|\xi|^2}\right)\right|
&\leq C|\xi|^{-|\alpha|} \quad (\xi\in\BR^N\setminus\{0\}), \\
\left|\pd_{\xi}^{\alpha} \left( \frac{\lambda^{1/2}i\xi_j}{\rho_\pm\lambda+|\xi|^2}\right)\right|
&\leq C|\xi|^{-|\alpha|} \quad (\xi\in\BR^N\setminus\{0\}), \notag \\
\left|\pd_{\xi}^{\alpha} \left( \frac{\xi_k\xi_l}{\rho_\pm\lambda+|\xi|^2}\right)\right|
&\leq C|\xi|^{-|\alpha|} \quad (\xi\in\BR^N\setminus\{0\}), \notag
\end{align*}
with some positive constant $C=C(N,\alpha,\sigma)$ (cf. \cite[Section 3]{ShiShi12}).
Thus, applying  the classical Fourier multiplier theorem
to the above formulas of $V_\pm^\vps$ and setting
\begin{equation}\label{0816:2_2019}
\BF_\pm^\vps=\CF_\xi^{-1}\left[e^{-\vps |\xi|^2}\CF\left[\BF_\pm\right](\xi)\right](x)
\end{equation}
furnish the following estimates: 
\begin{align}\label{est:approx}
\|(\lambda^{1/2}V_\pm^\vps,\nabla V_\pm^\vps)\|_{L_q(\BR^N)}
&\leq C\|\BF_\pm^\vps\|_{L_q(\BR^N)}, \\
\|(\lambda V_\pm^\vps,\lambda^{1/2}\nabla V_\pm^\vps,\nabla^2 V_\pm^\vps)\|_{L_q(\BR^N)}
&\leq C\|F_{\pm,\dv}^\vps\|_{L_q(\BR^N)}, \notag
\end{align}
where $C$ is a positive constant independent of $\vps$.
Analogously,
\begin{align}\label{Cauchy:1}
|\lambda|\|V_\pm^\vps-V_\pm^{\vps'}\|_{L_q(\BR^N)}
&\leq C\|F_{\pm,\dv}^\vps-F_{\pm,\dv}^{\vps'}\|_{L_q(\BR^N)}, \\
|\lambda|^{1/2}\|\nabla(V_\pm^\vps-V_\pm^{\vps'})\|_{L_q(\BR^N)}
&\leq C\|F_{\pm,\dv}^\vps-F_{\pm,\dv}^{\vps'}\|_{L_q(\BR^N)}, \notag \\
\|\nabla^2(V_\pm^\vps-V_\pm^{\vps'})\|_{L_q(\BR^N)}
&\leq C\|F_{\pm,\dv}^\vps-F_{\pm,\dv}^{\vps'}\|_{L_q(\BR^N)}, \notag
\end{align}
for $\vps,\vps'>0$ and a positive constant $C$ independent of $\vps$ and $\vps'$.

Next, we compute the formulas of $\pd_N V_\pm^\vps$ on $\BR_0^N$.
We have
$$
\CF[E_\pm^oz_\pm](\xi)
=
\int_0^\infty\left(\pm e^{- i y_N\xi_N} \mp e^{i y_N\xi_N}\right)\wht z_\pm(\xi',\pm y_N)\intd y_N,
$$
while by \eqref{eq1:whole}
\begin{equation}\label{eq1:residue}
\pd_N V_\pm^\vps =-\CF_{\xi}^{-1}\left[e^{-\vps|\xi|^2}\frac{i\xi_N}{\rho_\pm\lambda+|\xi|^2}\CF[E_\pm^oz_\pm](\xi)\right](x).
\end{equation}
These formulas yield
\begin{align*}
&\pd_N V_\pm^\vps =-\int_0^\infty\CF_{\xi'}^{-1}\bigg[\wht z_\pm (\xi',\pm y_N) \\ 
&\cdot\left( \frac{1}{2\pi}\int_{-\infty}^\infty e^{-\vps|\xi|^2}\frac{i\xi_N}{\rho_\pm\lambda+|\xi|^2}
\left(\pm e^{i(x_N -  y_N)\xi_N} \mp e^{i(x_N + y_N)\xi_N}\right)\intd\xi_N\right)\bigg](x')\intd y_N.
\end{align*}
Inserting \eqref{residue:1} into the above formula of $\pd_N V_\pm^\vps$ furnishes
\begin{equation}\label{eq5-1:whole_0}
(\pd_N V_\pm^\vps)(x',0)=
\mp e^{\vps\rho_\pm\lambda} \int_0^\infty \CF_{\xi'}^{-1}
\left[e^{-A_\pm y_N} \wht z_\pm(\xi',\pm y_N)\right](x') \intd y_N.
\end{equation}

Now we consider the limit: $\vps\to 0$.
By the well-known property of the heat kernel $\CF_{\xi}^{-1}[e^{-\vps |\xi|^2}](x)$, we have
\begin{align*}
\lim_{\vps\to 0}\|\BF_\pm^\vps-\BF_\pm\|_{L_q(\BR^N)}=0, \quad
\lim_{\vps\to 0}\|F_{\pm,\dv}^\vps-\dv\BF_\pm\|_{L_q(\BR^N)}=0.
\end{align*}
Combining the last property with \eqref{Cauchy:1} shows that
there exist $V_\pm \in H_q^2(\BR^N)$ such that $\lim_{\vps\to 0}\|V_\pm^\vps-V_\pm\|_{H_q^2(\BR^N)}=0$.
One then sees that $V_\pm$ satisfy, by \eqref{eq1-2:whole}, \eqref{eq3:whole}, and \eqref{est:approx}, 
the equations:
\begin{equation*}
\rho_\pm\lambda V_\pm-\Delta V_\pm =-\dv\BF_\pm = -\dv\Bf_\pm \quad \text{in $\BR_\pm^N$}
\end{equation*}
and the estimates:
\begin{align}\label{est:whole-1}
&\|(\lambda^{1/2}V_\pm,\nabla V_\pm)\|_{L_q(\BR^N)}
\leq C\|\BF_\pm\|_{L_q(\BR^N)} \leq C\|\Bf_\pm\|_{L_q(\BR_\pm^N)}, \\
&\|(\lambda V_\pm,\lambda^{1/2}\nabla V_\pm,\nabla^2 V_\pm)\|_{L_q(\BR^N)}
\leq C\|\dv\BF_\pm\|_{L_q(\BR^N)}\leq C\|\dv\Bf_\pm\|_{L_q(\BR_\pm^N)}, \notag
\end{align}
while there holds by \eqref{eq5-1:whole_0}
\begin{equation*}
(\pd_N V_\pm)(x',0)=\mp \int_0^\infty \CF_{\xi'}^{-1}\left[e^{-A_\pm y_N} \wht z_\pm(\xi',\pm y_N)\right](x') \intd y_N.
\end{equation*}
Combining these formulas with
$$
\wht z_\pm(\xi',\pm y_N)=\sum_{j=1}^{N-1}i\xi_j\wht f_{\pm j}(\xi',\pm y_N)+(\pd_N \wht f_{\pm N})(\xi',\pm y_N)
$$
and with integration by parts furnishes
\begin{align}\label{eq5-2:whole}
(\pd_N  V_\pm)(x',0) &= f_{\pm N}(x',0)  \\
&\mp \sum_{j=1}^{N-1}\int_0^\infty \CF_{\xi'}^{-1}\left[i\xi_j e^{-A_\pm y_N}\wht f_{\pm j}(\xi',\pm y_N)
\right](x')\intd y_N \notag \\
&- \int_0^\infty \CF_{\xi'}^{-1}\left[A_\pm e^{-A_\pm y_N}\wht f_{\pm N}(\xi',\pm y_N)
\right](x')\intd y_N .
\notag
\end{align}

Finally, we set 
\begin{equation*}
W_\pm(x) = \CF_\xi^{-1}\left[\frac{\CF\left[E_\pm^e g_\pm\right](\xi)}
{\rho_\pm\lambda+|\xi|^2}\right](x) \quad (x\in\BR^N).
\end{equation*}
Then $W_\pm$ satisfy the equations:
$$
\rho_\pm\lambda W_\pm-\Delta W_\pm = E_\pm^e g_\pm=g_\pm \quad \text{in $\BR_\pm^N$,}
$$
and also satisfy, similarly to \eqref{est:approx}, the estimates: 
\begin{align}\label{est:whole-2}
&\|(\lambda^{1/2}W_\pm,\nabla W_\pm)\|_{L_q(\BR^N)}
\leq C|\lambda|^{-\frac{1}{2}}\|E_\pm^e g_\pm\|_{L_q(\BR^N)}
\leq C|\lambda|^{-\frac{1}{2}}\|g_\pm\|_{L_q(\BR_\pm^N)}, \\
&\|(\lambda W_\pm,\lambda^{1/2}\nabla W_\pm,\nabla^2 W_\pm)\|_{L_q(\BR^N)}
\leq C\|E_\pm^e g_\pm\|_{L_q(\BR^N)}
\leq C\|g_\pm\|_{L_q(\BR_\pm^N)}, \notag
\end{align}
where $C$ is a positive constant independent of $\lambda$.
In addition, we have\footnote{These properties follow from the uniqueness of solutions to $\lambda u-\Delta u = f$ in $\BR^N$,
see e.g. \cite[Subsection 2.4]{Saito2019}.}
\begin{equation}\label{eq5-3:whole}
\pd_N W_\pm =0 \quad \text{on $\BR_0^N$.}
\end{equation}
Thus, setting $U_{\pm,\lambda} = V_\pm+W_\pm$,
we see that $U_{\pm,\lambda}$ satisfy all of the properties required in Lemma \ref{lemm:whole-lambda}
by \eqref{est:whole-1}-\eqref{eq5-3:whole}.
This completes the proof of Lemma \ref{lemm:whole-lambda}.
\end{proof}

Next, we consider the case $\lambda=0$ and $g_\pm=0$ of \eqref{eq1:whole-2}:
\begin{equation}\label{eq1:whole-2_2}
\Delta U_{\pm,0} = \dv\Bf_\pm \quad \text{in $\BR_\pm^N$.}
\end{equation}
Concerning these equations, we prove

\begin{lemm}\label{lemm:whole-zero}
Let $q\in(1,\infty)$. Then, for any $\Bf_\pm=(f_{\pm1},\dots,f_{\pm N})^\SST\in C_0^\infty(\BR^N)$,
there exist $U_{\pm,0}\in\wht H_q^1(\BR^N)\cap \wht H_q^2(\BR^N)$, satisfying \eqref{eq1:whole-2_2},
such that the following assertions hold.
\begin{enumerate}[$(1)$]
\item
There hold the estimates:
\begin{equation*}
\|\nabla^2 U_{\pm,0}\|_{L_q(\BR^N)}
\leq C\|\dv\Bf_\pm\|_{L_q(\BR_\pm^N)}, \quad
\|\nabla U_{\pm,0}\|_{L_q(\BR^N)}
\leq C \|\Bf_\pm\|_{L_q(\BR_\pm^N)},
\end{equation*}
with a positive constant $C=C(N,q,\rho_+,\rho_-)$.
\item
The traces of $\pd_N U_{\pm,0}$ on $\BR_0^N$ are given by
\begin{align*}
(\pd_N U_{\pm,0})(x',0)
&=
f_{\pm N}(x',0)  
\mp \sum_{j=1}^{N-1}\int_0^\infty \CF_{\xi'}^{-1}\left[i\xi_j e^{-|\xi'|y_N}\wht f_{\pm j}(\xi',\pm y_N)
\right](x')\intd y_N \notag \\
&- \int_0^\infty \CF_{\xi'}^{-1}\left[|\xi'| e^{-|\xi'| y_N}\wht f_{\pm N}(\xi',\pm y_N)
\right](x')\intd y_N,
\end{align*}
where the symbols $\CF_{\xi'}^{-1}$ and $\wht{\,\cdot\,}$ are defined as \eqref{PFT}. 
\end{enumerate}
\end{lemm}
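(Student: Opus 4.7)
The plan is to mimic the proof of Lemma \ref{lemm:whole-lambda} with $\lambda=0$, but using the fact that the resulting Fourier multipliers are homogeneous of degree zero (hence still Mihlin) at the cost of losing $L_q$-control on the function itself. First, extend $\Bf_\pm$ to $\BR^N$ by the odd/even reflections in \eqref{ext:F}, so that $\dv\BF_\pm=E_\pm^o z_\pm$ with $z_\pm=\dv\Bf_\pm$. For $\vps>0$, define the regularized candidate
\[
V_\pm^\vps(x)=-\CF_\xi^{-1}\!\left[e^{-\vps|\xi|^2}\,\frac{i\xi\cdot\CF[\BF_\pm](\xi)}{|\xi|^2}\right]\!(x),
\]
which solves $-\Delta V_\pm^\vps=-F_{\pm,\dv}^\vps$ on $\BR^N$, where $F_{\pm,\dv}^\vps$ is as in \eqref{0816:1_2019}. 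Since $\Bf_\pm\in C_0^\infty(\BR^N)^N$, $\CF[\BF_\pm]$ is Schwartz, and the factor $e^{-\vps|\xi|^2}$ makes the integrand integrable near the origin, so $V_\pm^\vps\in C^\infty(\BR^N)$ is well defined for each $\vps>0$.

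Next, observe that
\[
\pd_j V_\pm^\vps(x)=\CF_\xi^{-1}\!\left[e^{-\vps|\xi|^2}\,\frac{\xi_j\,\xi\cdot\CF[\BF_\pm](\xi)}{|\xi|^2}\right]\!(x),\quad
\pd_k\pd_l V_\pm^\vps(x)=\CF_\xi^{-1}\!\left[e^{-\vps|\xi|^2}\,\frac{\xi_k\xi_l}{|\xi|^2}\CF[\dv\BF_\pm](\xi)\right]\!(x).
\]
The symbols $\xi_j\xi/|\xi|^2$ and $\xi_k\xi_l/|\xi|^2$ are zero-order and satisfy the Mihlin condition $|\pd_\xi^\alpha m(\xi)|\leq C|\xi|^{-|\alpha|}$, so the classical Fourier multiplier theorem yields the uniform estimates
\[
\|\nabla V_\pm^\vps\|_{L_q(\BR^N)}\leq C\|\BF_\pm^\vps\|_{L_q(\BR^N)}\leq C\|\Bf_\pm\|_{L_q(\BR_\pm^N)},\quad
\|\nabla^2 V_\pm^\vps\|_{L_q(\BR^N)}\leq C\|F_{\pm,\dv}^\vps\|_{L_q(\BR^N)}\leq C\|\dv\Bf_\pm\|_{L_q(\BR_\pm^N)},
\]
with $C$ independent of $\vps$, along with Cauchy-type differences analogous to \eqref{Cauchy:1}. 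Hence $\{\nabla V_\pm^\vps\}$ and $\{\nabla^2 V_\pm^\vps\}$ are Cauchy in $L_q(\BR^N)$, and after fixing $V_\pm^\vps$ to vanish at, say, the origin, the functions themselves converge in $L_{q,\lc}(\BR^N)$ by Poincar\'e's inequality on balls. This produces $U_{\pm,0}\in\wht H_q^1(\BR^N)\cap\wht H_q^2(\BR^N)$ solving \eqref{eq1:whole-2_2} distributionally on $\BR_\pm^N$ and satisfying the required estimates.

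For the trace formula, repeat the residue calculation of Lemma \ref{lemm:whole-lambda}\,\eqref{lemm:whole-lambda-2} with $\rho_\pm\lambda$ replaced by $0$. Starting from
\[
\pd_N V_\pm^\vps=-\CF_\xi^{-1}\!\left[e^{-\vps|\xi|^2}\,\frac{i\xi_N}{|\xi|^2}\,\CF[E_\pm^o z_\pm](\xi)\right]\!(x),
\]
split $\CF[E_\pm^o z_\pm](\xi)=\int_0^\infty(\pm e^{-iy_N\xi_N}\mp e^{iy_N\xi_N})\wht z_\pm(\xi',\pm y_N)\intd y_N$, apply \eqref{residue:2} (in place of \eqref{residue:1}) to the $\xi_N$-integrals, and set $x_N=0$ to obtain
\[
(\pd_N V_\pm^\vps)(x',0)=\mp\int_0^\infty\CF_{\xi'}^{-1}\!\left[e^{-|\xi'|y_N}\wht z_\pm(\xi',\pm y_N)\right]\!(x')\intd y_N
\]
(the factor $e^{\vps\rho_\pm\lambda}$ in \eqref{eq5-1:whole_0} is replaced by $1$ here, since $\lambda=0$). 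Passing to the limit $\vps\to 0$ and decomposing $\wht z_\pm=\sum_{j=1}^{N-1}i\xi_j\wht f_{\pm j}+\pd_N\wht f_{\pm N}$ followed by integration by parts in $y_N$ (exactly as in \eqref{eq5-2:whole}) gives the stated trace formula.

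The main obstacle is conceptual rather than technical: in the $\lambda=0$ setting one loses the $L_q$-control of $V_\pm^\vps$ itself, so the natural function space becomes $\wht H_q^1\cap\wht H_q^2$ and one must argue carefully that the $\vps\to 0$ limit produces a genuine function (well-defined up to an additive constant) rather than merely a distribution. This is resolved by the compact support of $\Bf_\pm$, which guarantees local $L_q$-convergence via Poincar\'e's inequality once the gradients converge in $L_q(\BR^N)$.
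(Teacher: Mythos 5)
Your proof is correct and follows essentially the same route as the paper: define the $\vps$-regularized candidate, use $L_q$-boundedness of zero-order Fourier multipliers (Mihlin/Riesz) to get uniform and Cauchy-type estimates on the first and second gradients, pass to the limit $\vps\to 0$, and recover the trace formula from \eqref{residue:2} exactly as the paper does with \eqref{residue:1} in Lemma~\ref{lemm:whole-lambda}. Your Poincar\'e-on-balls argument for upgrading gradient convergence to local convergence of the functions themselves spells out a point that the paper compresses into ``similarly to the proof of Lemma~\ref{lemm:whole-lambda}'', and it is a sound way to fix the additive constant. One minor slip in the write-up: since $\BF_\pm$ is produced from $\Bf_\pm\in C_0^\infty(\BR^N)^N$ by odd/even reflection across $\{x_N=0\}$, it is generally discontinuous there, so $\CF[\BF_\pm]$ is \emph{not} Schwartz; the smoothness and integrability of $V_\pm^\vps$ rest on the Gaussian factor $e^{-\vps|\xi|^2}$, not on decay of $\CF[\BF_\pm]$.
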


\begin{proof}
Let $\BF_\pm$, $F_{\pm,\dv}^\vps$, and $\BF_\pm^\vps$
be given in \eqref{ext:F}, \eqref{0816:1_2019}, and \eqref{0816:2_2019}, respectively.
Set
\begin{equation*}
U_{\pm,0}^\vps=-\CF_\xi^{-1}\left[e^{-\vps|\xi|^2}\frac{i\xi \cdot \CF[\BF_\pm](\xi)}{|\xi|^2}\right](x) \quad (\vps>0).
\end{equation*}
We then have
\begin{equation*}
\pd_j U_{\pm,0}^\vps(x)
= \CF_\xi^{-1}\left[e^{-\vps |\xi|^2}\frac{\xi_j\xi\cdot\CF[\BF_\pm](\xi)}{|\xi|^2}\right](x) \quad (j=1,\dots,N), 
\end{equation*}
and also by \eqref{eq1:whole}
\begin{equation*}
\pd_k\pd_l U_{\pm,0}^\vps(x)
=\CF_\xi^{-1}\left[e^{-\vps |\xi|^2}\frac{\xi_k\xi_l}{|\xi|^2}\CF[\dv\BF_\pm](\xi)\right](x) \quad (k,l=1,\dots,N).
\end{equation*}
By the $L_q$ boundedness of the Riesz operators,
\begin{align*}
&\|\nabla U_{\pm,0}^\vps\|_{L_q(\BR^N)} \leq C\|\BF_\pm^\vps\|_{L_q(\BR^N)}, \quad
\|\nabla^2 U_{\pm,0}^\vps\|_{L_q(\BR^N)}\leq C\|F_{\pm,\dv}^\vps\|_{L_q(\BR^N)}, \\
&\|\nabla(U_{\pm,0}^\vps-U_{\pm,0}^{\vps'})\|_{L_q(\BR^N)} \leq C\|\BF_\pm^\vps-\BF_\pm^{\vps'}\|_{L_q(\BR^N)}, \\
&\|\nabla^2(U_{\pm,0}^\vps-U_{\pm,0}^{\vps'})\|_{L_q(\BR^N)} \leq C\|F_{\pm,\dv}^\vps-F_{\pm,\dv}^{\vps'}\|_{L_q(\BR^N)}, 
\end{align*}
where $\vps,\vps'>0$ and $C$ is a positive constant independent of $\vps$ and $\vps'$.
Thus, similarly to the proof of Lemma \ref{lemm:whole-lambda},
we can construct $U_{\pm,0}\in\wht H_q^1(\BR^N)\cap \wht H_q^2(\BR^N)$
satisfying
$\Delta U_{\pm,0}=\dv\Bf_\pm$ in $\BR_\pm^N$ and the estimates:
\begin{align*}
\|\nabla U_{\pm,0}\|_{L_q(\BR^N)} \leq C\|\Bf_\pm\|_{L_q(\BR_\pm^N)}, \quad
\|\nabla^2 U_{\pm,0}\|_{L_q(\BR^N)} \leq C\|\dv\Bf_\pm\|_{L_q(\BR_\pm^N)}.
\end{align*}
We also obtain the formulas of $\pd_N U_{\pm,0}$, stated in Lemma \ref{lemm:whole-zero} (2), from \eqref{residue:2} and
$$
\pd_N U_{\pm,0}^\vps=-\CF_\xi^{-1}\left[e^{-\vps |\xi|^2}\frac{i\xi_N}{|\xi|^2}\CF[E_\pm^o z_\pm](\xi)\right](x)
$$
in the same manner that
we have obtained the formulas of $\pd_N U_{\pm,\lambda}$,
stated in Lemma \ref{lemm:whole-lambda} (2), from \eqref{residue:1} and \eqref{eq1:residue}.
This completes the proof of Lemma \ref{lemm:whole-zero}.
\end{proof}

\subsection{Proof of Theorem \ref{theo:2-bdd}}
We prove Theorem \ref{theo:2-bdd} in this subsection.
In view of Lemma \ref{lemm:approx1}, it suffices to consider 
\begin{equation*}
\Bf_\pm =(f_{\pm 1},\dots, f_{\pm N})^\SST\in C_0^\infty(\BR^N)^N, \quad
g_\pm\in C_0^\infty(\BR_\pm^N), \quad h_\pm\in C_0^\infty(\BR^N).
\end{equation*}
Let $U_{\pm,\lambda}$ be solutions of \eqref{eq1:whole-2} constructed in Lemma \ref{lemm:whole-lambda}
for $\lambda\in\Sigma_{\sigma,0}$.
Setting $v_\pm=U_{\pm,\lambda}+w_\pm$ in \eqref{s-eq:2-bdd} yields
\begin{equation}\label{eq20:whole}
\left\{\begin{aligned}
\rho_\pm\lambda w_\pm-\Delta w_\pm &= 0 && \text{in $\BR_\pm^N$,} \\
\rho_+ w_+-\rho_- w_- &=g_1 && \text{on $\BR_0^N$,} \\
\pd_N w_+-\pd_N w_-&=g_2 && \text{on $\BR_0^N$.}
\end{aligned}\right.
\end{equation}
where 
\begin{align*}
g_1&=-(\rho_+U_{+,\lambda}-\rho_-U_{-,\lambda}), \\
g_2&=-(\pd_N U_{+,\lambda}-\pd_N U_{-,\lambda})+f_{+N}-f_{-N}+h_+-h_-.
\end{align*}
We apply the partial Fourier transform, given in \eqref{PFT},
to \eqref{eq20:whole} in order to obtain
\begin{align*}
\left\{\begin{aligned}
\rho_\pm\lambda\wht w_\pm(x_N)-(\pd_N^2-|\xi'|^2)\wht w_\pm(x_N)&=0, \quad \text{$\pm x_N>0$,} \\
\rho_+\wht w_+(0)-\rho_-\wht w_-(0)&=\wht g_1(0), \\
\pd_N\wht w_+(0)-\pd_N\wht w_-(0)&=\wht g_2(0).
\end{aligned}\right.
\end{align*}
Solving these ordinary differential equations with respect to $x_N$,
we have
\begin{align*}
\wht w_\pm(x_N)=\left(\pm \frac{A_\mp}{\rho_+A_-+\rho_-A_+}\wht g_1(0)
-\frac{\rho_\mp}{\rho_+A_-+\rho_-A_+}\wht g_2(0)\right)e^{-A_\pm (\pm x_N)}.
\end{align*}
Thus, setting $w_\pm=\CF_{\xi'}^{-1}[\wht w_\pm(x_N)](x')$,
we see that $w_{\pm}$ are solutions to \eqref{eq20:whole}.

From now on, we estimate $w_\pm$. To this end, we decompose $w_\pm$ as follows:
\begin{equation}\label{0818:7_2019}
w_\pm=\pm w_\pm^1-\rho_\mp(w_\pm^2-w_\pm^3+w_\pm^4),
\end{equation}
where
\begin{align*}
w_\pm^1&=\CF_{\xi'}^{-1}\left[\frac{A_\mp }{\rho_+A_-+\rho_-A_+}e^{-A_\pm (\pm x_N)}\wht g_1(0)\right](x'), \\
w_\pm^2&=\CF_{\xi'}^{-1}\left[\frac{1}{\rho_+A_-+\rho_-A_+}e^{- A_\pm (\pm x_N)} \wht h_+(0)\right](x'), \\
w_\pm^3&=\CF_{\xi'}^{-1}\left[\frac{1}{\rho_+A_-+\rho_-A_+}e^{- A_\pm (\pm x_N)} \wht h_-(0)\right](x'), \\
w_\pm^4&=\CF_{\xi'}^{-1}\left[\frac{1}{\rho_+A_-+\rho_-A_+}e^{- A_\pm (\pm x_N)} \wht g_3(0)\right](x'),
\end{align*}
with 
$$
g_3=g_2-(h_+-h_-)=-(\pd_N U_{+,\lambda}-\pd_N U_{-,\lambda})+f_{+N}- f_{-N}.
$$
By Lemma \ref{lemm:symbol1} and Leibniz's formula, we have for $A\in\{A_+,A_-,i\xi_1,\dots,i\xi_{N-1}\}$
\begin{equation}\label{0818:1_2019}
\left|\pd_{\xi'}^{\alpha'}\left(\frac{A}{\rho_+ A_-+\rho_-A_+}\right)\right|
\leq C(|\lambda|^{1/2}+|\xi'|)^{-|\alpha'|},
\end{equation}
where $\xi'=(\xi_1,\dots,\xi_{N-1})\in\BR^{N-1}\setminus\{0\}$, $\lambda\in\Sigma_{\sigma,0}$,
and $C=C(N,\alpha',\sigma)>0$. 
Thus, applying Lemma \ref{lemm:multiplier}
to the above formula of $w_\pm^1$ yields
\begin{align}\label{est:whole-w1}
&\|(\lambda w_\pm^1,\lambda^{1/2}\nabla w_\pm^1,\nabla^2 w_\pm^1)\|_{L_q(\BR_\pm^N)} \\
&\leq C\sum_{\Fs\in\{+,-\}}
\|(\lambda U_{\Fs,\lambda},\lambda^{1/2}\nabla U_{\Fs,\lambda},\nabla^2 U_{\Fs,\lambda})\|_{L_q(\BR_\Fs^N)}, \notag \\
&\|(\lambda^{1/2} w_\pm^1,\nabla w_\pm^1)\|_{L_q(\BR_\pm^N)} 
\leq C\sum_{\Fs\in\{+,-\}}\|(\lambda^{1/2} U_{\Fs,\lambda},\nabla U_{\Fs,\lambda})\|_{L_q(\BR_\pm^N)}. \notag
\end{align}
Analogously, we have from Lemmas \ref{lemm:symbol1} and \ref{lemm:multiplier}
\begin{align}\label{est:whole-w2}
&\|(\lambda w_\pm^2,\lambda^{1/2}\nabla w_\pm^2,\nabla^2 w_\pm^2)\|_{L_q(\BR_\pm^N)}
\leq C\|(\lambda^{1/2}h_+,\nabla h_+)\|_{L_q(\BR_+^N)}, \\
&\|(\lambda^{1/2}w_\pm^2,\nabla w_\pm^2)\|_{L_q(\BR_\pm^N)}
\leq C\left(\|h_+\|_{L_q(\BR_+^N)}+|\lambda|^{-1/2}\|\nabla h_+\|_{L_q(\BR_+^N)}\right), \notag
\end{align}
and also
\begin{align}\label{est:whole-w2^2}
&\|(\lambda w_\pm^3,\lambda^{1/2}\nabla w_\pm^3,\nabla^2 w_\pm^3)\|_{L_q(\BR_\pm^N)}
\leq C\|(\lambda^{1/2}h_-,\nabla h_-)\|_{L_q(\BR_-^N)}, \\
&\|(\lambda^{1/2}w_\pm^3,\nabla w_\pm^3)\|_{L_q(\BR_\pm^N)}
\leq C\left(\|h_-\|_{L_q(\BR_-^N)}+|\lambda|^{-1/2}\|\nabla h_-\|_{L_q(\BR_-^N)}\right). \notag
\end{align}
Furthermore, Lemmas \ref{lemm:symbol1} and \ref{lemm:multiplier} yield
\begin{align}\label{est:whole-w3-1}
&\|(\lambda w_\pm^4,\lambda^{1/2}\nabla w_\pm^4,\nabla^2 w_\pm^4)\|_{L_q(\BR_\pm^N)} \\
&\leq C \sum_{\Fs\in\{+,-\}}\|(\lambda^{1/2}\pd_N U_{\Fs,\lambda}, \nabla \pd_N U_{\Fs,\lambda},
\lambda^{1/2} f_{\Fs N},\nabla f_{\Fs N})\|_{L_q(\BR_\Fs^N)}. \notag 
\end{align}
On the other hand, if $f_{+N}-f_{-N}=0$ on $\BR_-^N$, then we have
\begin{align}\label{est:whole-w3-1-1}
&\|(\lambda w_\pm^4,\lambda^{1/2}\nabla w_\pm^4,\nabla^2 w_\pm^4)\|_{L_q(\BR_\pm^N)}  \\
&\leq C \sum_{\Fs\in\{+,-\}}\|(\lambda^{1/2}\pd_N U_{\Fs,\lambda}, \nabla \pd_N U_{\Fs,\lambda}
)\|_{L_q(\BR_\Fs^N)}.  \notag
\end{align}

Next, we estimate $\|(\lambda^{1/2}w_\pm^4,\nabla w_\pm^4)\|_{L_q(\BR_\pm^N)}$.
Note that by Lemma \ref{lemm:whole-lambda} \eqref{lemm:whole-lambda-2}
\begin{align*}
\wht g_3(0)
&=\sum_{\Fs\in\{+,-\}}\sum_{j=1}^{N-1}\int_0^\infty
i\xi_j e^{-A_{\Fs}y_N}\wht f_{\Fs j}(\xi',\Fs y_N)\intd y_N \\
&+\sum_{\Fs\in\{+,-\}}\Fs\int_0^\infty
A_\Fs e^{-A_\Fs y_N}\wht f_{\Fs N}(\xi',\Fs y_N)\intd y_N.
\end{align*}
We thus obtain
\begin{align*}
w_\pm^4
&=
\sum_{\Fs\in\{+,-\}}\sum_{j=1}^{N-1}\int_0^\infty
\CF_{\xi'}^{-1}\left[\frac{i\xi_j e^{-(A_\pm(\pm x_N)+ A_{\Fs}y_N)}}{\rho_+A_-+\rho_-A_+} \wht f_{\Fs j}(\xi',\Fs y_N)\right](x')\intd y_N \\
&+\sum_{\Fs\in\{+,-\}}
\Fs\int_0^\infty
\CF_{\xi'}^{-1}\left[\frac{A_\Fs e^{-(A_\pm(\pm x_N)+ A_{\Fs}y_N)}}{\rho_+A_-+\rho_-A_+} \wht f_{\Fs N}(\xi',\Fs y_N)\right](x')\intd y_N.
\end{align*}
Combining these formulas with Lemma \ref{lemm:tech-1}, \eqref{eq1:20191113}, and \eqref{0818:1_2019} furnishes
\begin{align*}
\|(\lambda^{1/2}w_\pm^4,\nabla w_\pm^4)\|_{L_q(\BR_\pm^N)}
\leq C\left(\|\Bf_+\|_{L_q(\BR_+^N)}+\|\Bf_-\|_{L_q(\BR_-^N)}\right).
\end{align*}
Recalling $v_\pm=U_{\pm,\lambda}+w_\pm$ and \eqref{0818:7_2019},
we obtain solutions $v_\pm$ of \eqref{s-eq:2-bdd} satisfying \eqref{est:1-whole}-\eqref{est:1-whole-jumpzero}
by the last estimate, \eqref{est:whole-w1}-\eqref{est:whole-w3-1-1}, and Lemma \ref{lemm:whole-lambda}.

Finally, we prove the uniqueness of solutions to \eqref{s-eq:2-bdd}.
Assume that $v_\pm \in H_q^2(\BR_\pm^N)$
satisfy the homogeneous equations, i.e.
\begin{align*}
\left\{\begin{aligned}
\rho_\pm\lambda v_\pm -\Delta v_\pm &=0 && \text{in $\BR_\pm^N$,} \\
\rho_+v_+&=\rho_-v_- && \text{on $\BR_0^N$,} \\
\pd_N v_+&=\pd_N v_- && \text{on $\BR_0^N$.}
\end{aligned}\right.
\end{align*}
Let $\Bf_\pm\in C_0^\infty(\BR_\pm^N)^N$.
Since we know the existence of solutions to \eqref{s-eq:2-bdd} for these $\Bf_\pm$ as already discussed above,
we have $w_\pm\in H_{q'}^2(\BR_\pm^N)$, $q'=q/(q-1)$, satisfying
\begin{equation*}
\left\{\begin{aligned}
\rho_\pm\lambda w_\pm - \Delta w_\pm &= -\dv\Bf_\pm && \text{in $\BR_\pm^N$,} \\
\rho_+ w_+&=\rho_- w_- && \text{on $\BR_0^N$,} \\
\pd_N w_+&=\pd_N w_- && \text{on $\BR_0^N$.}
\end{aligned}\right.
\end{equation*}
It then holds by integration by parts that
\begin{align*}
&(\rho_+\nabla v_+, \Bf_+)_{\BR_+^N}+(\rho_-\nabla v_-,\Bf_-)_{\BR_-^N}
=-(\rho_+v_+,\dv\Bf_+)_{\BR_+^N}-(\rho_-v_-,\dv\Bf_-)_{\BR_-^N} \\
&=(\rho_+v_+,\rho_+\lambda w_+-\Delta w_+)_{\BR_+^N}+(\rho_-v_-,\rho_-\lambda w_--\Delta w_-)_{\BR_-^N} \\
&=(\rho_+\lambda v_+-\Delta v_+, \rho_+ w_+)_{\BR_+^N}
+(\rho_-\lambda v_--\Delta v_-,\rho_- w_-)_{\BR_-^N}=0,
\end{align*}
and thus $\rho_\pm v_\pm=c_\pm$ for some constants $c_\pm$. 
By the equation: $\rho_\pm\lambda v_\pm-\Delta v_\pm=0$ in $\BR_\pm^N$,
we conclude $c_\pm=0$ since $\lambda\neq 0$. This implies the uniqueness of solutions to \eqref{s-eq:2-bdd},
which completes the proof of Theorem \ref{theo:2-bdd}.

\subsection{Proof of Theorem \ref{theo:2-bdd_2}}\label{subsec:3-3}
We prove Theorem \ref{theo:2-bdd_2} in this subsection.
In view of Lemma \ref{lemm:approx1}, it suffices to consider 
\begin{equation*}
\Bf_\pm =(f_{\pm 1},\dots, f_{\pm N})^\SST\in C_0^\infty(\BR^N)^N.
\end{equation*}
Let $U_{\pm,0}$ be the solutions of \eqref{eq1:whole-2_2} constructed in Lemma \ref{lemm:whole-zero},
and set $v_\pm=U_{\pm,0}+ w_\pm$ in \eqref{s-eq:2-bdd_2}.
Then,
\begin{equation}\label{eq6:whole}
\left\{\begin{aligned}
\Delta w_\pm &= 0 && \text{in $\BR_\pm^N$,} \\
\rho_+ w_+-\rho_- w_-&=g_1 && \text{on $\BR_0^N$,} \\
\pd_N w_+-\pd_N w_-&=g_2  && \text{on $\BR_0^N$,}
\end{aligned}\right.
\end{equation}
where 
$$
g_1=-(\rho_+ U_{+,0}-\rho_- U_{-,0}), \quad
g_2=-(\pd_N U_{+,0}-\pd_N U_{-,0})+f_{+N}-f_{-N}.
$$
We apply the partial Fourier transform, given in \eqref{PFT}, to \eqref{eq6:whole} 
in order to obtain
\begin{equation*}
\left\{\begin{aligned}
(\pd_N^2-|\xi'|^2)\wht w_\pm(x_N)&=0, \quad \pm x_N>0, \\
\rho_+\wht w_+(0)-\rho_-\wht w_-(0)&=\wht g_1 (0), \\
\pd_N\wht w_+(0)-\pd_N\wht w_-(0)&=\wht g_2(0).
\end{aligned}\right.
\end{equation*}
Solving these ordinary differential equations with respect to $x_N$,
we have
\begin{equation*}
\wht w_\pm(x_N)=\left(\frac{1}{\rho_++\rho_-}\right)
\left(\pm\wht g_1(0)-\rho_\mp\frac{\wht g_2(0)}{|\xi'|}\right)e^{\mp|\xi'|x_N}.
\end{equation*}
Thus, setting $w_\pm=\CF_{\xi'}^{-1}[\wht w_\pm(x_N)](x')$,
we see that $w_{\pm}$ are solutions to \eqref{eq6:whole}.

From now on, we estimate $w_\pm$. To this end, we decompose $w_\pm$ as follows:
\begin{equation}\label{0818:9_2019}
w_\pm=\pm\left(\frac{1}{\rho_++\rho_-}\right) w_\pm^1-\left(\frac{\rho_\mp}{\rho_++\rho_-}\right) w_\pm^2,
\end{equation}
where
\begin{equation*}
w_\pm^1=\CF_{\xi'}^{-1}\left[\wht g_1(0) e^{\mp |\xi'|x_N}\right](x'), \quad
w_\pm^2=\CF_{\xi'}^{-1}\left[\frac{\wht g_2(0)}{|\xi'|}e^{\mp |\xi'|x_N}\right](x').
\end{equation*}

Let us consider $\|\nabla w_\pm^1\|_{L_q(\BR_\pm^N)}$ and $\|\nabla^2 w_\pm^1\|_{L_q(\BR_\pm^N)}$.
Similarly to \eqref{vole:1}, one has 
\begin{align*}
\wht g_1(0)e^{\mp |\xi'|x_N}
&=\int_0^\infty|\xi'|e^{-|\xi'|(\pm x_N+y_N)}\wht g_1(\xi',\pm y_N) \intd y_N \\
&\mp \int_0^\infty e^{-|\xi'|(\pm x_N+y_N)}\wht{\pd_N g_1}(\xi',\pm y_N)\intd y_N, 
\end{align*}
which, combined with $|\xi'|=-\sum_{j=1}^{N-1}(i\xi_j)^2/|\xi'|$, furnishes
\begin{align*}
\wht g_1(0)e^{\mp |\xi'|x_N}
&=-\sum_{j=1}^{N-1}\int_0^\infty \frac{i\xi_j}{|\xi'|}e^{-|\xi'|(\pm x_N+y_N)}\wht{\pd_j g_1}(\xi',\pm y_N) \intd y_N \\
&\mp \int_0^\infty e^{-|\xi'|(\pm x_N+y_N)}\wht {\pd_N g_1}(\xi',\pm y_N)\intd y_N. \notag
\end{align*}
Inserting these relations into $w_\pm^1$ yields
\begin{align*}
w_\pm^1 &=
-\sum_{j=1}^{N-1}\int_0^\infty \CF_{\xi'}^{-1}\left[\frac{i\xi_j}{|\xi'|}e^{-|\xi'|(\pm x_N+y_N)}\wht{\pd_j g_1}(\xi',\pm y_N)\right](x') \intd y_N  \\
&\mp \int_0^\infty \CF_{\xi'}^{-1}\left[e^{-|\xi'|(\pm x_N+y_N)}\wht {\pd_N g_1}(\xi',\pm y_N)\right](x')\intd y_N.
\end{align*}
Thus, for $k,l=1,\dots,N-1$,
\begin{align*}
\pd_k w_\pm^1 
&=
\sum_{j=1}^{N-1}\int_0^\infty \CF_{\xi'}^{-1}\left[\frac{\xi_k\xi_j}{|\xi'|^2}|\xi'|e^{-|\xi'|(\pm x_N+y_N)}\wht{\pd_j g_1}(\xi',\pm y_N)\right](x') \intd y_N  \\
&\mp
\int_0^\infty \CF_{\xi'}^{-1}\left[\frac{i\xi_k}{|\xi'|}|\xi'|e^{-|\xi'|(\pm x_N+y_N)}\wht {\pd_N g_1}(\xi',\pm y_N)\right](x')\intd y_N,  \\
\pd_N w_\pm^1 
&=\pm
\sum_{j=1}^{N-1}\int_0^\infty \CF_{\xi'}^{-1}\left[\frac{i\xi_j}{|\xi'|}|\xi'|e^{-|\xi'|(\pm x_N+y_N)}\wht{\pd_j g_1}(\xi',\pm y_N)\right](x') \intd y_N  \\
&+ 
\int_0^\infty \CF_{\xi'}^{-1}\left[|\xi'|e^{-|\xi'|(\pm x_N+y_N)}\wht {\pd_N g_1}(\xi',\pm y_N)\right](x')\intd y_N, \\
\pd_l\pd_k w_\pm^1 
&=
\sum_{j=1}^{N-1}\int_0^\infty \CF_{\xi'}^{-1}\left[\frac{\xi_k\xi_j}{|\xi'|^2}|\xi'|e^{-|\xi'|(\pm x_N+y_N)}\wht{\pd_l\pd_j g_1}(\xi',\pm y_N)\right](x') \intd y_N  \\
&\mp
\int_0^\infty \CF_{\xi'}^{-1}\left[\frac{i\xi_k}{|\xi'|}|\xi'|e^{-|\xi'|(\pm x_N+y_N)}\wht {\pd_l\pd_N g_1}(\xi',\pm y_N)\right](x')\intd y_N,  \\
\pd_k\pd_N w_\pm^1 &=\pd_N\pd_k w_\pm^1\\
&=
\pm\sum_{j=1}^{N-1}\int_0^\infty \CF_{\xi'}^{-1}\left[\frac{i\xi_j}{|\xi'|}|\xi'|e^{-|\xi'|(\pm x_N+y_N)}\wht{\pd_k\pd_j g_1}(\xi',\pm y_N)\right](x') \intd y_N  \\
&+ 
\int_0^\infty \CF_{\xi'}^{-1}\left[|\xi'|e^{-|\xi'|(\pm x_N+y_N)}\wht {\pd_k \pd_N g_1}(\xi',\pm y_N)\right](x')\intd y_N.
\end{align*}
Analogously, for $\Delta' g_1=\sum_{j=1}^{N-1}\pd_j^2 g_1$,
\begin{align*}
\pd_N^2 w_\pm^1 
&=-
\int_0^\infty \CF_{\xi'}^{-1}\left[|\xi'|e^{-|\xi'|(\pm x_N+y_N)}\wht{\Delta' g_1}(\xi',\pm y_N)\right](x') \intd y_N  \\
&\pm\sum_{j=1}^{N-1}
\int_0^\infty \CF_{\xi'}^{-1}\left[\frac{i\xi_j}{|\xi'|}|\xi'|e^{-|\xi'|(\pm x_N+y_N)}\wht {\pd_j\pd_N g_1}(\xi',\pm y_N)\right](x')\intd y_N.
\end{align*}
It then follows from Lemmas \ref{lemm:symbol1} and \ref{lemm:tech-2} that
\begin{align}\label{eq6-5:whole}
\|\nabla w_\pm^1\|_{L_q(\BR_\pm^N)}
&\leq C\sum_{\Fs\in\{+,-\}}\|\nabla U_{\Fs,0}\|_{L_q(\BR_\Fs^N)}, \\ 
\|\nabla^2 w_\pm^1\|_{L_q(\BR_\pm^N)}
&\leq C\sum_{\Fs\in\{+,-\}}\|\nabla^2 U_{\Fs,0}\|_{L_q(\BR_\pm^N)}. \notag
\end{align}
We similarly see that
\begin{equation}\label{eq9:whole}
\|\nabla^2 w_\pm^2\|_{L_q(\BR_\pm^N)}
\leq C\sum_{\Fs\in\{+,-\}}\left(\|\nabla\pd_N U_{\Fs,0}\|_{L_q(\BR_\Fs^N)}+\|\nabla f_{\Fs N}\|_{L_q(\BR_\Fs^N)}\right)
\end{equation}
and that, if $f_{+N}-f_{-N}=0$ on $\BR_0^N$,
\begin{equation}\label{eq9:whole-2}
\|\nabla^2 w_\pm^2\|_{L_q(\BR_\pm^N)}
\leq C\sum_{\Fs\in\{+,-\}}\|\nabla\pd_N U_{\Fs,0}\|_{L_q(\BR_\Fs^N)}.
\end{equation}

Next, we estimate $\|\nabla w_\pm^2\|_{L_q(\BR_\pm^N)}$. 
By Lemma \ref{lemm:whole-zero}, 
\begin{equation*}
\wht g_2(0)
=\sum_{\Fs\in\{+,-\}}\int_0^\infty e^{-|\xi'|y_N}
\left(i\xi'\cdot \wht\Bf_\Fs'(\xi',\Fs y_N)+\Fs|\xi'|\wht f_{\Fs N}(\xi',\Fs y_N)\right)\intd y_N,
\end{equation*}
which implies that
\begin{align*}
w_\pm^2 &=\sum_{\Fs\in\{+,-\}}\bigg\{ 
\sum_{j=1}^{N-1}
\int_0^\infty\CF_{\xi'}^{-1}\bigg[\frac{i\xi_j}{|\xi'|} e^{-|\xi'|(\pm x_N+y_N)}\wht f_{\Fs j}(\xi',\Fs y_N)\bigg](x')\intd y_N \\
&+\Fs \int_0^\infty\CF_{\xi'}^{-1}\left[e^{-|\xi'|(\pm x_N+y_N)}\wht f_{\Fs N}(\xi',\Fs y_N)\right](x')\intd y_N\bigg\}.
\end{align*}
Then, for $k=1,\dots,N-1$,
\begin{align*}
\pd_k w_\pm^2 
 &=\sum_{\Fs\in\{+,-\}}\bigg\{  
-\sum_{j=1}^{N-1}
\int_0^\infty\CF_{\xi'}^{-1}\bigg[\frac{\xi_k\xi_j}{|\xi'|^2} |\xi'|e^{-|\xi'|(\pm x_N+y_N)}\wht f_{\Fs j}(\xi',\Fs y_N)\bigg](x')\intd y_N \notag \\
&+\Fs \int_0^\infty\CF_{\xi'}^{-1}\left[\frac{i\xi_k}{|\xi'|} |\xi'|e^{-|\xi'|(\pm x_N+y_N)}\wht f_{\Fs N}(\xi',\Fs y_N)\right](x')\intd y_N\bigg\},  \\
\pd_N w_\pm^2 
&=\sum_{\Fs\in\{+,-\}}\bigg\{ 
\mp \sum_{j=1}^{N-1}
\int_0^\infty\CF_{\xi'}^{-1}\bigg[\frac{i\xi_j}{|\xi'|} |\xi'|e^{-|\xi'|(\pm x_N+y_N)}\wht f_{\Fs j}(\xi',\Fs y_N)\bigg](x')\intd y_N \notag \\
&\mp \Fs \int_0^\infty\CF_{\xi'}^{-1}\left[|\xi'|e^{-|\xi'|(\pm x_N+y_N)}\wht f_{\Fs N}(\xi',\Fs y_N)\right](x')\intd y_N\bigg\}. \notag
\end{align*}
Combining these formulas with Lemmas \ref{lemm:symbol1} and \ref{lemm:tech-2} furnishes
\begin{equation*}
\|\nabla w_\pm^2\|_{L_q(\BR_\pm^N)}\leq C\left(\|\Bf_+\|_{L_q(\BR_+^N)}+\|\Bf_-\|_{L_q(\BR_-^N)}\right).
\end{equation*}
Recalling $v_\pm=U_{\pm,0}+w_\pm$ and \eqref{0818:9_2019},
we obtain solutions $v_\pm$ of \eqref{s-eq:2-bdd_2} satisfying \eqref{0819:1_2019}-\eqref{0824:1_2019}
by the last estimate, \eqref{eq6-5:whole}-\eqref{eq9:whole-2}, and Lemma \ref{lemm:whole-zero}.

Finally, we prove the uniqueness of solutions to \eqref{s-eq:2-bdd_2}.
Let $v_\pm$ satisfy \eqref{unique:flat} and $\Bf_\pm\in C_0^\infty(\BR_\pm^N)^N$.
By the discussion above, 
there exists 
$w_\pm\in \wht H_{q'}^1(\BR_\pm^N)\cap \wht H_{q'}^2(\BR_\pm^N)$ with $q'=q/(q-1)$ satisfying
\begin{equation*}
\left\{\begin{aligned}
\Delta w_\pm &= \dv\Bf_\pm && \text{in $\BR_\pm^N$,} \\
\rho_+ w_+ &= \rho_- w_- && \text{on $\BR_0^N$,} \\
\pd_N w_+&=\pd_N w_- && \text{on $\BR_0^N$.}
\end{aligned}\right.
\end{equation*}
At this point, we introduce the following lemma (cf. e.g. \cite[Section II.6]{Galdi11}, \cite{Shibata-book}, \cite[Proof of Theorem A.4]{Shi2013}).

\begin{lemm}\label{lemm:Hardy}
Let $q\in(1,\infty)$ and set
\begin{equation*}
d_q(x)=
\left\{\begin{aligned}
&(1+|x|^2)^{1/2} && \text{when $q\neq N$,} \\
&(1+|x|^2)^{1/2}\log(2+|x|^2)^{1/2}  && \text{when $q=N$.}
\end{aligned}\right.
\end{equation*}
Then, for any $u\in \wht H_q^1(\BR^N)$, there exists a constant $c_u$ such that
\begin{equation*}
\left\|\frac{u-c_u}{d_q}\right\|_{L_q(\BR^N)} \leq C\|\nabla u\|_{L_q(\BR^N)},
\end{equation*}
where $C=C(N,q)$ is a positive constant independent of $u$ and $c_u$.
\end{lemm}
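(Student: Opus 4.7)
The plan is to establish this weighted Hardy-type inequality via a dyadic decomposition of $\BR^N$ into annular shells, combined with local Poincar\'e inequalities and careful bookkeeping of the average values on each shell. Set $A_k = \{x \in \BR^N : 2^k \leq |x| < 2^{k+1}\}$ for $k \in \BZ$ and $B_0 = \{|x|<1\}$. On each $A_k$ the weight satisfies $d_q(x) \sim 2^k$, up to the extra logarithmic factor present only in the critical case. The classical Poincar\'e inequality applied to a fixed reference annulus and then rescaled yields
$$
\|u-m_k\|_{L_q(A_k)} \leq C\, 2^k \, \|\nabla u\|_{L_q(A_k)},
$$
where $m_k$ denotes the mean value of $u$ over $A_k$. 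Dividing by $d_q \sim 2^k$ controls the oscillation of $u$ on each individual shell uniformly in $k$.

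To glue the shells together, one must bound the differences of consecutive averages. Either applying the Poincar\'e inequality on the enlarged set $A_k \cup A_{k+1}$, or estimating $m_{k+1}-m_k$ directly via H\"older's inequality and the size $|A_k| \sim 2^{kN}$, gives
$$
|m_{k+1}-m_k| \leq C\, 2^{k(1-N/q)} \, \|\nabla u\|_{L_q(A_k \cup A_{k+1})}.
$$
The summability of these increments as $k \to +\infty$ hinges on the sign of $1-N/q$. If $q<N$, the geometric factor $2^{k(1-N/q)}$ lies in $\ell^{q'}$ over $k\geq 0$, so H\"older in $k$ gives $\sum_{k\geq 0}|m_{k+1}-m_k| \leq C\|\nabla u\|_{L_q(\BR^N)}$; hence $m_k \to c_u^\infty$ and this limit is the constant $c_u$ of the statement. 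If $q>N$, one instead chooses $c_u = m_0$; the increments are not summable, but $|m_k-c_u|$ grows only like $2^{k(1-N/q)} < 2^k$, which is absorbed by $d_q \sim |x|$. If $q=N$, the increments are of order $\|\nabla u\|_{L_q(A_k)}$ uniformly in $k$, and their partial sums grow like $k$ times $\|\nabla u\|_{L_q}$, which is precisely the logarithmic growth killed by the $\log$-factor in $d_q$.

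In each of the three regimes, combining the shellwise Poincar\'e estimate with the bound on $|m_k - c_u|$, dividing by $d_q$, and summing over $k$ produces the required bound on $\{|x|\geq 1\}$. On $B_0$, where $d_q$ is bounded below and above by positive constants, the usual Poincar\'e inequality applied to $u-c_u$ (with the same $c_u$) gives the corresponding local bound, and adding the two contributions yields the claim.

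The main technical obstacle is the critical exponent $q=N$: one must match the logarithmic weight $\log(2+|x|^2)^{1/2}$ in $d_q$ precisely to the logarithmic divergence of the partial sums of $|m_{k+1}-m_k|$, and this tight matching is what forces the particular form of $d_q$. A secondary subtlety is to justify the computations for a general $u \in \wht H_q^1(\BR^N)$, since $\wht H_q^1(\BR^N)$ is only a homogeneous space; this is handled by a standard mollification-and-truncation argument as in the cited references, reducing the estimate to the case $u \in C^\infty(\BR^N) \cap \wht H_q^1(\BR^N)$ on which all the computations above are rigorous.
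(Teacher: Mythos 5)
The paper does not give its own proof of this lemma; it simply cites \cite[Section II.6]{Galdi11}, \cite{Shibata-book}, and \cite[Proof of Theorem A.4]{Shi2013}, so there is no internal argument to compare against. Your dyadic-shell decomposition with a local Poincar\'e inequality on each annulus and control of consecutive averages is a standard and reasonable route to such weighted inequalities, and both the rescaled Poincar\'e estimate $\|u-m_k\|_{L_q(A_k)}\leq C\,2^k\|\nabla u\|_{L_q(A_k)}$ and the increment bound $|m_{k+1}-m_k|\leq C\,2^{k(1-N/q)}\|\nabla u\|_{L_q(A_k\cup A_{k+1})}$ are correct.

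However, the step ``dividing by $d_q$, and summing over $k$ produces the required bound'' hides a genuine gap. Write $a_k=\|\nabla u\|_{L_q(A_k\cup A_{k+1})}$ and $b_k=\|(m_k-c_u)/d_q\|_{L_q(A_k)}\sim |m_k-c_u|\,2^{k(N/q-1)}$ (times an extra $k^{-1}$ when $q=N$). You must prove $\|(b_k)_k\|_{\ell^q}\leq C\|(a_k)_k\|_{\ell^q}$, but the pointwise bounds on $|m_k-c_u|$ you derive are too crude for this: for $q<N$ the uniform bound $|m_k-c_u|\leq C\|\nabla u\|_{L_q}$ gives $b_k\leq C\,2^{k(N/q-1)}\|\nabla u\|_{L_q}$ with $N/q-1>0$, so $\sum_k b_k^q$ diverges; for $q>N$ the bound $|m_k-m_0|\leq C\,2^{k(1-N/q)}\|\nabla u\|_{L_q}$ gives only $b_k\leq C\|\nabla u\|_{L_q}$, which is not $\ell^q$-summable; and for $q=N$, H\"older gives $|m_k-m_0|\leq C\,k^{1-1/q}\|\nabla u\|_{L_q}$, hence $b_k\leq C\,k^{-1/q}\|\nabla u\|_{L_q}$, and $\sum_k k^{-1}$ diverges. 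The correct closure retains the spatial localization of the $a_j$ and exploits the discrete convolution structure $b_k\leq C\sum_j 2^{-|j-k|\,|1-N/q|}\,a_j$ via Young's inequality when $q\neq N$, and the discrete Hardy inequality $\bigl\|k^{-1}\sum_{j\leq k}a_j\bigr\|_{\ell^q}\leq \tfrac{q}{q-1}\|(a_k)\|_{\ell^q}$ when $q=N$. Your remark that the logarithmic factor in $d_q$ ``kills the logarithmic growth'' is a statement about $\sup_k b_k$, not about $\|(b_k)\|_{\ell^q}$, which is what actually needs to be bounded; supplying the Young/Hardy step is the missing piece that makes the sketch into a proof.
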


Now we set 
\begin{equation*}
\Bf=\Bf_+\mathds{1}_{\BR_+^N}+\Bf_-\mathds{1}_{\BR_-^N}, \quad 
z=\rho_+ z_+\mathds{1}_{\BR_+^N}+\rho_-z_-\mathds{1}_{\BR_-^N} \text{ for $z\in\{v,w\}$}.
\end{equation*}
Then $v\in \wht H_q^1(\BR^N)$ with 
$\nabla v=\rho_+ (\nabla v_+)\mathds{1}_{\BR_+^N}+\rho_-(\nabla v_-)\mathds{1}_{\BR_-^N}$
by $\rho_+ v_+=\rho_- v_-$ on $\BR_0^N$,
while $w\in \wht H_{q'}^1(\BR^N)$ with 
$\nabla w=\rho_+ (\nabla w_+)\mathds{1}_{\BR_+^N}+\rho_-(\nabla w_-)\mathds{1}_{\BR_-^N}$
by $\rho_+ w_+=\rho_-w_-$ on $\BR_0^N$.
By Lemma \ref{lemm:Hardy}, there exist constants $c_v$ and $c_w$ such that
\begin{equation*}
\left\|\frac{v- c_v}{d_q}\right\|_{L_q(\BR^N)}\leq C\|\nabla v\|_{L_q(\BR^N)}, \quad 
\left\|\frac{w- c_w}{d_{q'}}\right\|_{L_{q'}(\BR^N)}\leq C\|\nabla w\|_{L_{q'}(\BR^N)}.
\end{equation*}

Let $\psi_R$ be a cut-off function of Sobolev's type as follows\footnote{
See e.g. \cite[Proof of Theorem II.7.1]{Galdi11}.}:
For $R>0$ large enough and for $\psi\in C_0^\infty(\BR)$ satisfying $\psi(t)=1$ when $|t|\leq 1/2$ and $\psi(t)=0$ when $|t|\geq 1$,
\begin{equation*}
\psi_R(x)=
\left\{\begin{aligned}
&1 && \text{when }|x|\leq e^{\sqrt[3]{\log{R}}}, \\
&\psi\left(\frac{\log\log|x|}{\log\log R}\right) && \text{when } |x|\geq e^{\sqrt[3]{\log{R}}}.
\end{aligned}\right.
\end{equation*}
Note that for $D_R=\{x\in\BR^N: e^{\sqrt{\log R}}\leq |x|\leq R\}$
\begin{equation}\label{cut-off:1}
|\nabla\psi_R(x)|\leq \frac{C}{\log\log R}\frac{1}{|x|\log|x|}, \quad 
\spp\nabla \vph_R \subset D_R,
\end{equation}
where $C$ is a positive constant independent of $R$.

Choose $R>0$ large enough so that $\psi_R=1$ on $\spp\Bf$.
It then holds that
\begin{align*}
&(\nabla v,\Bf)_{\BR^N}=(\nabla(v-c_v),\Bf)_{\BR^N}=-(v-c_v,\dv\Bf)_{\BR^N}=-(v-c_v,\psi_R\dv\Bf)_{\BR^N} \\
&=-(v-c_v,\psi_R\Delta w_+)_{\BR_+^N}-(v-c_v,\psi_R\Delta w_-)_{\BR_-^N} \\
&=-(v-c_v,\dv(\psi_R\nabla w_+)-\nabla\psi_R\cdot\nabla w_+)_{\BR_+^N} \\
&-(v-c_v,\dv(\psi_R\nabla w_-)-\nabla\psi_R\cdot\nabla w_-)_{\BR_-^N} \\
&=(\rho_+\nabla v_+,\psi_R\nabla w_+)_{\BR_+^N}+(v-c_v,\nabla\psi_R\cdot\nabla w_+)_{\BR_+^N} \\
&+(\rho_-\nabla v_-,\psi_R\nabla w_-)_{\BR_-^N}+(v-c_v,\nabla\psi_R\cdot\nabla w_-)_{\BR_-^N}. 
\end{align*}
On the other hand, 
\begin{align*}
&(\rho_+\nabla v_+,\psi_R\nabla w_+)_{\BR_+^N}+(\rho_-\nabla v_-,\psi_R\nabla w_-)_{\BR_-^N} \\
&=(\psi_R\nabla v_+,\nabla(\rho_+w_+-c_w))_{\BR_+^N}+(\psi_R\nabla v_-,\nabla(\rho_-w_--c_w))_{\BR_-^N} \\
&=-(\dv(\psi_R\nabla v_+),\rho_+w_+-c_w)_{\BR_+^N}-(\dv(\psi_R\nabla v_-),\rho_-w_--c_w)_{\BR_-^N} \\
&=-(\nabla\psi_R\cdot \nabla v_+,w-c_w)_{\BR_+^N}-(\nabla\psi_R\cdot\nabla v_-,w-c_w)_{\BR_-^N},
\end{align*}
and thus
\begin{align}\label{0821:1_2019}
(\nabla v,\Bf)_{\BR^N}&=
(v-c_v,\nabla\psi_R\cdot\nabla w_+)_{\BR_+^N}+(v-c_v,\nabla\psi_R\cdot\nabla w_-)_{\BR_-^N} \\
&-(\nabla\psi_R\cdot \nabla v_+,w-c_w)_{\BR_+^N}-(\nabla\psi_R\cdot\nabla v_-,w-c_w)_{\BR_-^N}. \notag
\end{align}
We here see that by Lemma \ref{lemm:Hardy} and \eqref{cut-off:1}
\begin{align*}
&|(v-c_v,\nabla\psi_R\cdot\nabla w_\pm)_{\BR_\pm^N}| \\
&\leq \|\frac{v-c_v}{d_q}\|_{L_q(\BR^N)}\|d_{q}\nabla\psi_R\|_{L_\infty(\BR^N)}\|\nabla w_\pm\|_{L_{q'}(\BR_\pm^N)} \\
&\leq \frac{C}{\log\log R}\|\nabla v\|_{L_q(\BR^N)}\|\nabla w_\pm\|_{L_{q'}(\BR_\pm^N)} \to 0 \quad \text{as $R\to \infty$,}
\end{align*}
and also
\begin{equation*}
\lim_{R\to \infty} (\nabla\psi_R\cdot \nabla v_\pm,w-c_w)_{\BR_\pm^N}=0.
\end{equation*}
By these properties, letting $R\to\infty$ in \eqref{0821:1_2019} furnishes
\begin{equation*}
(\rho_+\nabla v_+,\Bf_+)_{\BR_+^N}+(\rho_-\nabla v_-,\Bf_-)_{\BR_-^N}=(\nabla v,\Bf)_{\BR^N}=0,
\end{equation*}
which implies $\rho_\pm v_\pm= c_\pm$ for some constants $c_\pm$.
Since $\rho_+ v_+=\rho_- v_-$ on $\BR_0^N$, we have $c_+=c_-$.
Thus setting $c=c_+=c_-$ yields $v_\pm=\rho_\pm^{-1}c$, which implies the uniqueness of solutions to \eqref{s-eq:2-bdd_2}.
This completes the proof of Theorem \ref{theo:2-bdd_2}.

\subsection{Weak elliptic problem with flat interface}\label{subsec:whole1-weak}
For $\Bf\in L_q(\BR^N\setminus\BR_0^N)^N$ with $q\in(1,\infty)$,
this subsection considers the unique solvability of the following weak elliptic problem:
Find $u\in\wht H_q^1(\BR^N)$ such that
\begin{equation}\label{w-eq:whole}
(\rho^{-1}\nabla u,\nabla\vph)_{\BR^N\setminus\BR_0^N}=(\Bf,\nabla\vph)_{\BR^N\setminus\BR_0^N}
\quad \text{for any }\vph\in \wht H_{q'}^1(\BR^N),
\end{equation}
where $q'=q/(q-1)$ and $\rho=\rho_+\mathds{1}_{\BR_+^N}+\rho_-\mathds{1}_{\BR_-^N}$ for positive constants $\rho_\pm$.
More precisely, we prove

\begin{theo}\label{w-theo:whole}
Let $q\in(1,\infty)$ and $q'=q/(q-1)$.  
\begin{enumerate}[$(1)$]
\item {\bf Existence}.
Let $\Bf\in L_q(\BR^N\setminus\BR_0^N)^N$.
Then the weak elliptic problem \eqref{w-eq:whole} admits a solution $u\in\wht H_q^1(\BR^N)$ satisfying
\begin{equation}\label{0919:1_2019}
\|\nabla u\|_{L_q(\BR^N)} \leq 2C_2(\rho_++\rho_-)\|\Bf\|_{L_q(\BR^N\setminus\BR_0^N)},
\end{equation}
where $C_2$ is the same constant as in Theorem $\ref{theo:2-bdd_2}$.
\item {\bf Uniqueness}.
If $u\in \wht H_q^1(\BR^N)$ satisfies
\begin{equation}\label{hom-eq:whole}
(\rho^{-1}\nabla u,\nabla\vph)_{\BR^N\setminus\BR_0^N}=0 \quad \text{for any $\vph\in\wht H_{q'}^1(\BR^N)$,}
\end{equation}
then $u=c$ for some constant $c$.
\end{enumerate}
\end{theo}

\begin{proof}
(1). Since $C_0^\infty(\BR^N)$ is dense in $L_q(\BR^N\setminus\BR_0^N)$,
it suffices to consider $\Bf\in C_0^\infty(\BR^N)^N$.
By Theorem \ref{theo:2-bdd_2},
we have $v_\pm\in \wht H_q^1(\BR_\pm^N)\cap \wht H_q^2(\BR_\pm^N)$ satisfying \eqref{s-eq:2-bdd_2}
and \eqref{0819:2_2019} with $\Bf_+=\Bf$ and $\Bf_-=\Bf$,
where we note that  $f_{+N}-f_{-N}=0$ on $\BR_0^N$ in this case. 
Setting $u_\pm=\rho_\pm v_\pm$ and
$u=u_+\mathds{1}_{\BR_+^N}+u_-\mathds{1}_{\BR_-^N}$ furnishes that
\begin{equation*}
\left\{
\begin{aligned}
\rho_\pm^{-1}\Delta u_\pm&=\dv\Bf_\pm && \text{in $\BR_\pm^N$,} \\
u_+&=u_- && \text{on $\BR_0^N$,} \\
\rho_+^{-1}\pd_N u_+&=\rho_-^{-1}\pd_N u_- && \text{on $\BR_0^N$,}
\end{aligned}
\right.
\end{equation*}
and that $u\in\wht H_q^1(\BR^N)$ with $\nabla u=\rho_+(\nabla v_+)\mathds{1}_{\BR_+^N}+\rho_-(\nabla v_-)\mathds{1}_{\BR_-^N}$
by $\rho_+v_+=\rho_-v_-$ on $\BR_0^N$,
Combining the last property with \eqref{0819:2_2019} yields
\begin{equation*}
\|\nabla u\|_{L_q(\BR^N)}
\leq C_2(\rho_++\rho_-)\sum_{\Fs\in\{+,-\}}\|\Bf_\Fs\|_{L_q(\BR_\Fs^N)}
\leq 2C_2(\rho_++\rho_-)\|\Bf\|_{L_q(\BR^N\setminus\BR_0^N)},
\end{equation*}
which implies that $u$ satisfies \eqref{0919:1_2019}.

Next, we prove that $u$ is a solution to \eqref{w-eq:whole}.
Let $\vph\in \wht H_{q'}^1(\BR^N)$.
Then there exists $\{\vph_j\}_{j=1}^\infty\subset C_0^\infty(\BR^N)$ such that
$\lim_{j\to \infty}\|\nabla(\vph_j-\vph)\|_{L_{q'}(\BR^N)}=0$. Thus,
\begin{align*}
(\Bf,\nabla\vph)_{\BR^N\setminus\BR_0^N}
&=\lim_{j\to\infty}(\Bf,\nabla\vph_j)_{\BR^N\setminus\BR_0^N}
=-\lim_{j\to\infty}\left\{(\dv\Bf_+,\vph_j)_{\BR_+^N}+(\dv\Bf_-,\vph_j)_{\BR_-^N}\right\} \\
&=-\lim_{j\to\infty}\left\{(\rho_+^{-1}\Delta u_+,\vph_j)_{\BR_+^N}+(\rho_-^{-1}\Delta u_-,\vph_j)_{\BR_+^N}\right\} \\
&=\lim_{j\to\infty}\left\{(\rho_+^{-1}\nabla u_+,\nabla\vph_j)_{\BR_+^N}+(\rho_-^{-1}\nabla u_-,\nabla\vph_j)_{\BR_+^N}\right\} \\
&=\lim_{j\to\infty}(\rho^{-1}\nabla u,\nabla\vph_j)_{\BR^N\setminus\BR_0^N}=(\rho^{-1}\nabla u,\nabla\vph)_{\BR^N\setminus\BR_0^N}.
\end{align*}
This completes the proof of (1).

(2).
Let $u\in\wht H_q^1(\BR^N)$ satisfy \eqref{hom-eq:whole}, and let $\Bf\in C_0^\infty(\BR^N)^N$.
By (1), we have $v\in \wht H_{q'}^1(\BR^N)$ satisfying
\begin{equation*}
(\rho^{-1}\nabla v,\nabla\psi)_{\BR^N\setminus\BR_0^N}
=(\Bf,\nabla\psi)_{\BR^N\setminus\BR_0^N} \quad \text{for any $\psi\in \wht H_q^1(\BR^N)$.}
\end{equation*}
Choosing $\psi= u$ in this equality yields
\begin{align*}
(\Bf,\nabla u)_{\BR^N}=(\Bf,\nabla u)_{\BR^N\setminus\BR_0^N}=(\rho^{-1}\nabla v,\nabla u)_{\BR^N\setminus\BR_0^N}
=(\rho^{-1}\nabla u,\nabla v)_{\BR^N\setminus\BR_0^N}=0,
\end{align*}
which implies $u=c$ for some constant $c$. 
This completes the proof of (2).
\end{proof}

\section{Problems in the whole space with bent interface}\label{sec:whole2}

Let $\Phi:\BR_x^N\to\BR_y^N$ be a bijection of class $C^1$
and $\Phi^{-1}$ the inverse mapping of $\Phi$,
where the subscripts $x$, $y$ denote their variables.
Let $(\nabla_x\Phi)(x)=\CA+\BB(x)$ and 
$(\nabla_y \Phi^{-1})(\Phi(x))=\CA_{-1}+\BB_{-1}(x)$.
Assume that $\CA$, $\CA_{-1}$ are orthonormal matrices with constant coefficients
and $\det\CA=\det\CA_{-1}=1$,
and also assume that $\BB(x)$, $\BB_{-1}(x)$ are matrix-valued functions of $H_r^1(\BR^N)$, $r\in(N,\infty)$,
satisfying
\begin{align}\label{bent:condi1}
\|(\BB,\BB_{-1})\|_{L_\infty(\BR^N)}&\leq M_1 \quad  (0<M_1 \leq 1/2), \\
\|(\nabla \BB,\nabla\BB_{-1})\|_{L_r(\BR^N)} &\leq M_2 \quad  (M_2\geq  1). \notag
\end{align}
In what follows, we will choose $M_1$ small enough eventually.

Let $\wtd \Omega_\pm=\Phi(\BR_\pm^N)$,  $\wtd\Sigma=\Phi(\BR_0^N)$,
and $\wtd\Bn=\wtd\Bn(y)$ be the unit normal vector on $\wtd\Sigma$
pointing from $\wtd\Omega_+$ into $\wtd\Omega_-$.
Let $\CA_{ij}$ and $B_{ij}(x)$ be respectively the $(i,j)$-component of $\CA_{-1}$
and the $(i,j)$-component of $\BB_{-1}(x)$.
In addition, setting $\Phi^{-1}=(\Phi_{-1,1},\dots,\Phi_{-1,N})$,
we see that $\wtd\Sigma$ is represented as $\Phi_{-1,N}(y)=0$.
This representation implies that
\begin{align}\label{normal:bent}
\wtd \Bn(\Phi(x))
&=-\frac{(\nabla_y \Phi_{-1,N})(\Phi(x))}{|(\nabla_y \Phi_{-1,N})(\Phi(x))|}
=-\frac{\left(\CA_{N1}+B_{N1}(x),\dots, \CA_{NN}+B_{NN}(x)\right)^\SST
}{\sqrt{\sum_{j=1}^N\left(\CA_{Nj}+B_{Nj}(x)\right)^2}}
\\
&=\frac{(\CA_{-1}+\BB_{-1}(x))^\SST\Bn_0}{|(\CA_{-1}+\BB_{-1}(x))^\SST\Bn_0|} 
\notag
\end{align}
for $\Bn_0=(0,\dots,0,-1)^\SST$.
Especially, $\wtd \Bn$ is defined on $\BR^N$ through the relation \eqref{normal:bent}.

\begin{rema}
Let $J$ be the Jacobian of $\Phi$, i.e. $J=\det(\nabla\Phi)$,
and let $d=|(\CA_{-1}+\BB_{-1}(x))^\SST\Bn_0|$.
Since $M_1$ is small enough,
there are positive constants $C_3$ and $C_4$, independent of $M_1$, $M_2$, and $r$, such that
the following inequalities hold:
\begin{align}\label{rema:jacob}
&C_3 \leq  J(x), d(x) \leq  C_4 \quad (x\in\BR^N), \quad 
\sup_{x\in\BR^N}|1- J(x)|\leq C_4 M_1, \\
&\|\nabla J\|_{L_r(\BR^N)}\leq C_4 M_2, \quad \|\nabla d\|_{L_r(\BR^N)} \leq C_4 M_2. \notag
\end{align}
\end{rema}

This section mainly considers two strong elliptic problems as follows: 
\begin{equation}\label{s-eq:bent-1}
\left\{\begin{aligned}
\rho_\pm\lambda \wtd v_\pm-\Delta \wtd v_\pm 
&= - \dv\wtd\Bf_\pm+ \wtd g_\pm && \text{in $\wtd\Omega_\pm$,} \\
\rho_+\wtd v_+ &= \rho_-\wtd v_- && \text{on $\wtd\Sigma$,} \\
\wtd\Bn\cdot\nabla(\wtd v_+- \wtd v_-) 
&=\wtd\Bn\cdot(\wtd\Bf_+-\wtd \Bf_-)+\wtd h_+-\wtd h_- && \text{on $\wtd \Sigma$,}
\end{aligned}\right. 
\end{equation}
and also
\begin{equation}\label{s-eq:bent-2}
\left\{\begin{aligned}
\Delta \wtd v_\pm &= \dv\wtd\Bf_\pm  && \text{in $\wtd\Omega_\pm$,} \\
\rho_+\wtd v_+ &= \rho_-\wtd v_- && \text{on $\wtd\Sigma$,} \\
\wtd\Bn\cdot\nabla(\wtd v_+- \wtd v_-) 
&=\wtd\Bn\cdot(\wtd\Bf_+-\wtd \Bf_-) && \text{on $\wtd \Sigma$.}
\end{aligned}\right. 
\end{equation}
Assume that $\rho_\pm$ are positive constants throughout this section and
set $F_q(\wtd\Omega_\pm)$ as
\begin{equation*}
F_q(\wtd\Omega_\pm)=\{\wtd \Bf_\pm\in E_q(\wtd\Omega_\pm): \wtd\Bn\cdot\wtd \Bf_\pm\in H_q^1(\wtd\Omega_\pm)\}.
\end{equation*}
Concerning \eqref{s-eq:bent-1} and \eqref{s-eq:bent-2}, we prove the following theorems.

\begin{theo}\label{theo:bent}
Let $M_1$, $M_2$, $r$, $\wtd\Omega_\pm$, and $\wtd\Sigma$ be as above,
and let $\sigma\in(0,\pi/2)$ and $q\in(1,r]$. 
Then there exist $M_1\in(0,1/2)$ and $\lambda_1\geq 1$ such that, for any
$$
\wtd\Bf_\pm \in F_q(\wtd \Omega_\pm), \quad
\wtd g_\pm\in L_q(\wtd\Omega_\pm), \quad
\wtd h_\pm\in H_q^1(\wtd\Omega_\pm)
$$
and for any $\lambda\in\Sigma_{\sigma,\lambda_1}$,
the strong elliptic problem \eqref{s-eq:bent-1} admits unique solutions $\wtd v_\pm\in H_q^2(\wtd\Omega_\pm)$.
In addition, the solutions $\wtd v_\pm$ satisfies
\begin{align}\label{0903:11_2019}
&\|(\lambda\wtd v_\pm,\lambda^{1/2}\nabla \wtd v_\pm,\nabla^2 \wtd v_\pm)\|_{L_q(\wtd\Omega_\pm)} \\
&\leq C\sum_{\Fs\in\{+,-\}}\left\|\left(\dv\wtd\Bf_\Fs,\wtd g_\Fs,\lambda^{1/2}(\wtd\Bn\cdot\wtd\Bf_\Fs),
\nabla(\wtd\Bn\cdot\wtd\Bf_\Fs),\lambda^{1/2}\wtd h_\Fs,\nabla\wtd h_\Fs\right)\right\|_{L_q(\wtd\Omega_\Fs)}, 
\notag
\end{align}
and also
\begin{align}\label{0903:12_2019}
\|(\lambda^{1/2}\wtd v_\pm,\nabla \wtd v_\pm)\|_{L_q(\wtd\Omega_\pm)} 
&\leq C\sum_{\Fs\in\{+,-\}}\Big(\|\wtd\Bf_\Fs\|_{L_q(\wtd\Omega_\Fs)}+|\lambda|^{-1/2}\|\wtd g_\Fs\|_{L_q(\wtd\Omega_\Fs)} \\
&+\|\wtd h_\Fs\|_{L_q(\wtd\Omega_\Fs)}+|\lambda|^{-1/2}\|\nabla\wtd h_\Fs\|_{L_q(\wtd\Omega_\Fs)}\Big), \notag
\end{align}
where $C=C(M_2,r,N,q,\sigma,\rho_+,\rho_-)$ is a positive constant independent of $M_1$, $\lambda_1$, and $\lambda$.
Additionally, if $\wtd\Bn\cdot(\wtd\Bf_+-\wtd\Bf_-)=0$ on $\wtd\Sigma$, then $\wtd v_\pm$ satisfy
\begin{align}\label{0903:11_2019-2}
&\|(\lambda\wtd v_\pm,\lambda^{1/2}\nabla \wtd v_\pm,\nabla^2 \wtd v_\pm)\|_{L_q(\wtd\Omega_\pm)} \\
&\leq C\sum_{\Fs\in\{+,-\}}\left\|\left(\dv\wtd\Bf_\Fs,\wtd g_\Fs,
\lambda^{1/2}\wtd h_\Fs,\nabla\wtd h_\Fs\right)\right\|_{L_q(\wtd\Omega_\Fs)}. \notag
\end{align}
Here, $M_1$ depends only on $N$, $q$, $\sigma$, $\rho_+$, and $\rho_-$, while
$\lambda_1$ depends only on $M_2$, $r$, $N$, $q$, $\sigma$, $\rho_+$, and $\rho_-$.
\end{theo}

\begin{theo}\label{theo:bent2}
Let $M_1$, $M_2$, $r$, $\wtd\Omega_\pm$, and $\wtd\Sigma$ be as above,
and let $q\in(1,\infty)$.
Assume that $\max(q,q')\leq r$ for $q'=q/(q-1)$.
Then there exists $M_1\in(0,1/2)$, depending only on $N$, $q$, $\rho_+$, and $\rho_-$,
such that the following assertions hold.
\begin{enumerate}[$(1)$]
\item
{\bf Existence}. Let $\wtd\Bf_\pm \in F_q(\wtd \Omega_\pm)$.
Then the strong elliptic problem \eqref{s-eq:bent-2} admits solutions
$\wtd v_\pm\in \wht H_q^1(\wtd\Omega_\pm)\cap \wht H_q^2(\wtd\Omega_\pm)$ satisfying
\begin{align}
\|\nabla^2 \wtd v_\pm\|_{L_q(\wtd\Omega_\pm)}
&\leq C\sum_{\Fs\in\{+,-\}}\left(\|\wtd\Bf_\Fs\|_{E_q(\wtd\Omega_\Fs)}
+\|\wtd\Bn\cdot\wtd\Bf_\Fs\|_{H_q^1(\wtd\Omega_\Fs)}\right), \label{0920:1_2019} \\
\|\nabla \wtd v_\pm\|_{L_q(\wtd\Omega_\pm)}
&\leq C'\sum_{\Fs\in\{+,-\}}\|\wtd \Bf_\Fs\|_{L_q(\wtd\Omega_\Fs)}, \label{0920:2_2019}
\end{align}
with positive constants $C=C(M_2,r,N,q,\rho_+,\rho_-)$ and $C'=C'(N,q,\rho_+,\rho_-)$ independent of $M_1$.
Additionally, if $\wtd\Bn\cdot(\wtd\Bf_+-\wtd\Bf_-)=0$ on $\wtd\Sigma$,
then $\wtd v_\pm$ satisfy
\begin{equation}\label{0920:3_2019}
\|\nabla^2 \wtd v_\pm\|_{L_q(\wtd\Omega_\pm)}
\leq C\sum_{\Fs\in\{+,-\}}\|\wtd\Bf_\Fs\|_{E_q(\wtd\Omega_\Fs)}.
\end{equation}
\item
{\bf Uniqueness}.
If $\wtd v_\pm\in \wht H_q^1(\wtd\Omega_\pm)\cap \wht H_q^2(\wtd\Omega_\pm)$ satisfy
\begin{equation*}
\Delta \wtd v_\pm=0 \text{ in $\wtd\Omega_\pm$,} \quad
\rho_+ \wtd v_+= \rho_-\wtd v_- \text{\  and \  } \wtd\Bn\cdot \nabla(\wtd v_+-\wtd v_-)=0 \text{ on $\wtd\Sigma$},
\end{equation*}
then $\wtd v_\pm=\rho_\pm^{-1}c$ for some constant $c$.
\end{enumerate}
\end{theo}

At this point, we introduce the following lemma, see \cite[Lemma 2.4]{Shi2013}, which play
an important role in proving Theorems \ref{theo:bent} and \ref{theo:bent2} in the following subsections.

\begin{lemm}\label{lemm:bent1}
Let  $q\in(1,\infty)$ and $r\in(N,\infty)$. Assume that $q\leq r$.
Then there exists a positive constant $C=C(N,q,r)$ such that,
for any $\vps>0$, $a\in L_r(\BR_+^N)$, and $b\in H_q^1(\BR_+^N)$,
$$
\|ab\|_{L_q(\BR_+^N)}\leq \vps\|\nabla b\|_{L_q(\BR_+^N)}+C\vps^{-\frac{N}{r-N}}\|a\|_{L_r(\BR_+^N)}^\frac{r}{r-N}\|b\|_{L_q(\BR_+^N)}.
$$
\end{lemm}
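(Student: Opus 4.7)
The plan is to derive this multiplicative inequality by combining a standard three-step argument: Hölder's inequality, the Gagliardo--Nirenberg interpolation inequality, and Young's inequality with an adjustable parameter. Set the intermediate exponent $s$ by $1/s = 1/q - 1/r$, which is well-defined since $q \leq r$ (we may treat the borderline case $q = r$ separately, where $a \in L_\infty$ and the claim reduces to a trivial estimate on $\|b\|_{L_q}$).

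First I would apply Hölder's inequality in the form $\|ab\|_{L_q(\BR_+^N)} \leq \|a\|_{L_r(\BR_+^N)} \|b\|_{L_s(\BR_+^N)}$. Next I would invoke the Gagliardo--Nirenberg inequality on the half-space (obtained from the whole-space version via a bounded extension operator $H_q^1(\BR_+^N) \to H_q^1(\BR^N)$) to bound
\begin{equation*}
\|b\|_{L_s(\BR_+^N)} \leq C \|\nabla b\|_{L_q(\BR_+^N)}^{\theta} \|b\|_{L_q(\BR_+^N)}^{1-\theta},
\end{equation*}
where the scaling/dimensional relation $1/s = \theta(1/q - 1/N) + (1-\theta)/q$ forces $\theta = N/r \in (0,1)$. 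Chaining these two inequalities yields
\begin{equation*}
\|ab\|_{L_q(\BR_+^N)} \leq C \|a\|_{L_r(\BR_+^N)} \|\nabla b\|_{L_q(\BR_+^N)}^{N/r} \|b\|_{L_q(\BR_+^N)}^{1-N/r}.
\end{equation*}

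Finally I would absorb the $\|\nabla b\|_{L_q}^{N/r}$ factor into the first term on the right of the claim by Young's inequality $xy \leq \vps x^p + C_\vps y^{p'}$ with the conjugate pair $p = r/N$ and $p' = r/(r-N)$, applied to $x = \|\nabla b\|_{L_q}^{N/r}$ and $y = C\|a\|_{L_r}\|b\|_{L_q}^{1-N/r}$. Since $(1-N/r)\,p' = 1$ and $p'/p = N/(r-N)$, the constant $C_\vps$ has the form $c\,\vps^{-N/(r-N)}$ and the second term comes out exactly as $C \vps^{-N/(r-N)} \|a\|_{L_r(\BR_+^N)}^{r/(r-N)} \|b\|_{L_q(\BR_+^N)}$, matching the stated inequality.

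The only nontrivial ingredient is the Gagliardo--Nirenberg inequality on $\BR_+^N$; everything else is arithmetic with exponents. This step is standard but deserves care because the three conditions $r > N$, $q \leq r$, and the resulting $\theta = N/r \in (0,1)$ must all line up to legitimize the GN interpolation and the subsequent Young split — and it is precisely the hypothesis $r > N$ that guarantees $p = r/N > 1$ so that Young's inequality is applicable in the required direction.
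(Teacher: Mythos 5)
Your argument is correct and is the standard route (H\"older with $1/s = 1/q - 1/r$, Gagliardo--Nirenberg with $\theta = N/r$ obtained on $\BR_+^N$ by even reflection, then Young's inequality with conjugate exponents $p=r/N$, $p'=r/(r-N)$); the paper itself does not prove this lemma but cites \cite[Lemma 2.4]{Shi2013}, which follows essentially the same scheme. One small slip in your aside: when $q=r$ it is $b$ (not $a$) that lands in $L_\infty$ via the $s=\infty$ endpoint of the interpolation, and the resulting bound $\|b\|_{L_\infty}\leq C\|\nabla b\|_{L_q}^{N/q}\|b\|_{L_q}^{1-N/q}$ (Morrey, valid since $q=r>N$) is exactly the $\theta=N/r$ case of your chain, so no genuinely separate treatment is needed.
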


\subsection{Proof of Theorem \ref{theo:bent}}\label{subset:1_bent}
To prove Theorem \ref{theo:bent},
we first reduce \eqref{s-eq:bent-1} to a problem in the whole space with the flat interface
by the change of variables: $y=\Phi(x)$.
Let $D_j=\pd/\pd y_j$ and $\pd_j=\pd/\pd x_j$.
One then notes the following fundamental relations: For $j,k=1,\dots,N$,
\begin{align*}
D_j&=\sum_{l=1}^N(\CA_{lj}+B_{lj}(x))\pd_l, \quad
\nabla_y=(\CA_{-1}+\BB_{-1}(x))^\SST \nabla_x, \\
D_jD_k
&=\sum_{l,m=1}^N \CA_{lj}\CA_{mk}\pd_l\pd_m \\
&+\sum_{l,m=1}^N\left(\CA_{lj}B_{mk}(x)+\CA_{mk}B_{lj}(x)+B_{lj}(x)B_{mk}(x)\right)\pd_l\pd_m \\
&+\sum_{l,m=1}^N(\CA_{lj}+B_{lj}(x))(\pd_l B_{mk})(x)\pd_m.
\end{align*}

Let $\wtd\Bu$ be a vector function on $\BR_y^N\setminus\wtd\Sigma=\wtd\Omega_+\cup\wtd\Omega_-$,
and let $\Bu=\wtd\Bu\circ\Phi$.
Then, for any $\wtd\vph\in C_0^\infty(\BR_y^N\setminus\wtd\Sigma)$ and for $\vph=\wtd\vph\circ\Phi$,
\begin{align*}
(\dv_y\wtd\Bu,\wtd\vph)_{\BR_y^N\setminus\wtd\Sigma} 
&=-(\wtd\Bu,\nabla_y\wtd\vph)_{\BR_y^N\setminus\wtd\Sigma} 
=-(J\Bu,(\CA_{-1}+\BB_{-1})^\SST\nabla_x\vph)_{\BR^N\setminus\BR_0^N} \\
&=(\dv_x\{J(\CA_{-1}+\BB_{-1})\Bu\},\vph)_{\BR^N\setminus\BR_0^N} \\
&=([J^{-1}\dv_x\{J(\CA_{-1}+\BB_{-1})\Bu\}]\circ\Phi^{-1},\wtd\vph)_{\BR_y^N\setminus\Sigma},
\end{align*}
which implies
\begin{equation}\label{change:div}
(\dv_y\wtd\Bu)\circ\Phi=J^{-1}\dv_x\{J(\CA_{-1}+\BB_{-1})\Bu\}.
\end{equation}
Analogously, we observe for a scalar function $\wtd u$ on $\BR_y^N\setminus\wtd\Sigma$
and $u=\wtd u\circ \Phi$ that
\begin{align*}
(\Delta_y \wtd u,\wtd\vph)_{\BR_y^N\setminus\wtd\Sigma}
&=(\nabla_y\wtd u,\nabla_y\wtd\vph)_{\BR_y^N\setminus\wtd\Sigma} \\
&=(J(\CA_{-1}+\BB_{-1})^\SST\nabla_x u,(\CA_{-1}+\BB_{-1})^\SST\nabla_x\vph)_{\BR_x^N\setminus\BR_0^N} \\
&=(\dv_x\{J(\CA_{-1}+\BB_{-1})(\CA_{-1}+\BB_{-1})^\SST\nabla_x u\},\vph)_{\BR_x^N\setminus\BR_0^N},
\end{align*}
and thus
\begin{equation}\label{change:lap}
(\Delta_y \wtd u)\circ\Phi=
J^{-1}\dv_x\{J(\CA_{-1}+\BB_{-1})(\CA_{-1}+\BB_{-1})^\SST\nabla_x u\}.
\end{equation}
For simplicity, one sets
$$
\BC_{-1}(x)=\CA_{-1}\BB_{-1}(x)^\SST+\BB_{-1}(x)\CA_{-1}^\SST+\BB_{-1}(x)\BB_{-1}(x)^\SST,
$$
and then
\begin{equation}\label{0824:11_2019}
(\CA_{-1}+\BB_{-1}(x))(\CA_{-1}+\BB_{-1}(x))^\SST=\BI+\BC_{-1}(x).
\end{equation}
Furthermore, let us define 
$$
v_\pm=\wtd v_\pm\circ \Phi, \quad \Bf_\pm=\wtd\Bf_\pm\circ\Phi, \quad
g_\pm= J (\wtd g_\pm\circ\Phi), \quad h_\pm= J d(\wtd h_\pm\circ\Phi).
$$
Then, by \eqref{normal:bent} and \eqref{change:div}-\eqref{0824:11_2019},
we obtain an equivalent system of \eqref{s-eq:bent-1} as follows:
\begin{equation}\label{s-eq:bent-prtb}
\left\{\begin{aligned}
\rho_\pm\lambda v_\pm-\Delta v_\pm
&=-\dv(\BF_\pm+\CF(v_\pm))+\rho_\pm\CG_{\lambda}(v_\pm)+g_\pm
&& \text{in $\BR_\pm^N$,} \\
\rho_+ v_+&=\rho_- v_- && \text{on $\BR_0^N$,} \\
\Bn_0\cdot\nabla  (v_+- v_-)
&=\Bn_0\cdot\{(\BF_++\CF_+(v_+))-(\BF_- +\CF_-(v_-))\} \\
&+h_+-h_-
&& \text{on $\BR_0^N$,}
\end{aligned}\right.
\end{equation}
where we have set 
\begin{align}\label{0903:3_2019}
&\BF_\pm=J(\CA_{-1}+\BB_{-1})\Bf_\pm, \quad 
\CF(v_\pm)
= (1-J)\nabla v_\pm -J\BC_{-1}\nabla v_\pm, \\
&\CG_\lambda(v_\pm)
=\lambda(1-J)v_\pm. \notag
\end{align}

\begin{rema}\label{rema:linear}
\begin{enumerate}[(1)]
\item\label{rema:linear-1}
The above $\CF$ and $\CG_\lambda$ are linear mappings. 
\item
By \eqref{bent:condi1}, there is a constant $C_5>0$, independent of $M_1$, $M_2$, and $r$, such that
\begin{equation}\label{0904:1_2019}
\|\BC_{-1}\|_{L_\infty(\BR^N)}\leq C_5 M_1, \quad \|\nabla\BC_{-1}\|_{L_r(\BR^N)} \leq C_5 M_2.
\end{equation}
\end{enumerate}
\end{rema}

In what follows, $F_{\pm N}$ stands for the $N$th component of $\BF_\pm$,
while $\CF_N(v_\pm)$ the $N$th component of $\CF(v_\pm)$.
By \eqref{rema:jacob} and \eqref{change:div},
\begin{equation}\label{0824:12_2019}
\|\dv\BF_\pm\|_{L_q(\BR_\pm^N)} \leq C_4^{1-1/q}\|\dv\wtd\Bf_\pm\|_{L_q(\wtd\Omega_\pm)}.
\end{equation}
In addition, since it holds by \eqref{normal:bent} that
\begin{equation}\label{0920:4_2019}
F_{\pm N}=-\Bn_0\cdot \{J(\CA_{-1}+\BB_{-1})\Bf_\pm\}=-J d(\wtd\Bn\cdot\wtd\Bf_\pm),
\end{equation}
one has by \eqref{rema:jacob} and Lemma \ref{lemm:bent1} with $\vps=1$
\begin{equation}\label{0903:1_2019}
\|F_{\pm N}\|_{H_q^j(\BR_\pm^N)}\leq C_{M_2,r}\|\wtd\Bn\cdot\wtd\Bf_\pm\|_{H_q^j(\wtd\Omega_{\pm})}
\quad (j=0,1).
\end{equation}
Here and subsequently, $C_{M_2,r}$ stands for generic positive constants depending on $M_2$, $N$, $q$, and $r$,
but independent of $M_1$.
Similarly to \eqref{0824:12_2019} and \eqref{0903:1_2019}, 
\begin{align}\label{0903:15_2019}
\|\BF_\pm\|_{L_q(\BR_\pm^N)}&\leq C_6\|\wtd\Bf_\pm\|_{L_q(\wtd\Omega_\pm)}, \quad 
\|g_\pm\|_{L_q(\BR_\pm^N)} \leq C_6\|\wtd g_\pm\|_{L_q(\wtd\Omega_\pm)}, \\ 
\|h_\pm\|_{H_q^j(\BR_\pm^N)}&\leq C_{M_2,r}\|\wtd h_\pm\|_{H_q^j(\wtd\Omega_\pm)} \quad (j=0,1), \notag
\end{align}
where $C_6$ is a positive constant independent of $M_1$, $M_2$, and $r$.
Concerning $\CF(v_\pm)$ and $\CG_\lambda(v_\pm)$,
we have by Lemma \ref{lemm:bent1}, \eqref{rema:jacob}, and \eqref{0904:1_2019}

\begin{lemm}\label{lemm:bent-reminder}
Let $q\in(1,\infty)$, $r\in(N,\infty)$, and $\vps>0$.
Assume that $q\leq r$.
Then there exist 
a positive constant $\alpha_{M_2,r,\vps}$ depending on $M_2$, $r$, and $\vps$,
but independent of $M_1$,
and a positive constant $C_7$ independent of $M_1$, $M_2$, $r$, and $\vps$, such that
for any $w_\pm\in H_q^2(\BR_\pm^N)$ 
\begin{align*}
&\|(\dv\CF(w_\pm),\nabla \CF(w_\pm))\|_{L_q(\BR_\pm^N)} \\
&\leq C_7 (M_1+\vps)\|\nabla^2 w_\pm\|_{L_q(\BR_\pm^N)} 
+ \alpha_{M_2,r,\vps}\|\nabla w_\pm\|_{L_q(\BR_\pm^N)}, 
\end{align*}
and also
\begin{align*}
\|\CF(w_\pm)\|_{L_q(\BR_\pm^N)}
&\leq C_7 M_1\|\nabla w_\pm\|_{L_q(\BR_\pm^N)}, \notag \\
\|\rho_\pm\CG_\lambda(w_\pm)\|_{L_q(\BR_\pm^N)}
&\leq C_7 M_1|\lambda|\|w_\pm\|_{L_q(\BR_\pm^N)} \quad (\lambda\in\BC). \notag
\end{align*}
\end{lemm}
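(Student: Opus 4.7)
The strategy is to exploit the explicit forms
\[
\CF(w_\pm)=(1-J)\nabla w_\pm - J\BC_{-1}\nabla w_\pm,\qquad \CG_\lambda(w_\pm)=\lambda(1-J)w_\pm,
\]
together with the $L_\infty$ bounds $\|1-J\|_{L_\infty(\BR^N)}\le C_4 M_1$ from \eqref{rema:jacob} and $\|\BC_{-1}\|_{L_\infty(\BR^N)}\le C_5 M_1$ from \eqref{0904:1_2019}, and the $L_r$ bounds on $\nabla J$ and $\nabla\BC_{-1}$. The pointwise inequalities $\|\CF(w_\pm)\|_{L_q(\BR_\pm^N)}\le C_7 M_1\|\nabla w_\pm\|_{L_q(\BR_\pm^N)}$ and $\|\rho_\pm\CG_\lambda(w_\pm)\|_{L_q(\BR_\pm^N)}\le C_7 M_1|\lambda|\,\|w_\pm\|_{L_q(\BR_\pm^N)}$ then follow at once by H\"older, with $C_7$ depending only on $C_4$, $C_5$, and $\rho_\pm$.

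The substance lies in the combined estimate for $\dv\CF(w_\pm)$ and $\nabla\CF(w_\pm)$. I would expand by Leibniz's rule, schematically
\[
\dv\CF(w_\pm)=(1-J)\Delta w_\pm-(J\BC_{-1}):\nabla^2 w_\pm-\nabla J\cdot\nabla w_\pm-\dv(J\BC_{-1})\cdot\nabla w_\pm,
\]
and analogously for $\nabla\CF(w_\pm)$, splitting the output into two types: \textbf{(i)} order-two terms in which $(1-J)$ or a coefficient of $J\BC_{-1}$ multiplies a second derivative of $w_\pm$; and \textbf{(ii)} order-one terms in which $\nabla J$ or a coefficient of $\nabla(J\BC_{-1})=(\nabla J)\BC_{-1}+J\,\nabla\BC_{-1}$ multiplies a first derivative of $w_\pm$. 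Terms of type (i) are controlled pointwise by the $L_\infty$ estimates above, contributing at most $C\,M_1\,\|\nabla^2 w_\pm\|_{L_q(\BR_\pm^N)}$.

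For type (ii) the coefficients sit in $L_r$ rather than $L_\infty$, so I invoke Lemma \ref{lemm:bent1} with $a$ equal to $\nabla J$ or a component of $\nabla(J\BC_{-1})$ and $b$ a component of $\nabla w_\pm$; using $M_1\le 1/2$ to absorb mixed factors, one checks $\|a\|_{L_r(\BR_\pm^N)}\le C\,M_2$, whence
\[
\|a\,\pd w_\pm\|_{L_q(\BR_\pm^N)}\le\vps\,\|\nabla^2 w_\pm\|_{L_q(\BR_\pm^N)}+\alpha_{M_2,r,\vps}\,\|\nabla w_\pm\|_{L_q(\BR_\pm^N)},
\]
with $\alpha_{M_2,r,\vps}$ proportional to $\vps^{-N/(r-N)}M_2^{r/(r-N)}$. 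Summing the type (i) and (ii) contributions and readjusting $C_7$ yields the required factor $C_7(M_1+\vps)$ in front of $\|\nabla^2 w_\pm\|_{L_q(\BR_\pm^N)}$.

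The main ``obstacle'' here is really just bookkeeping: the algebraic preparation in \eqref{rema:jacob} and \eqref{0904:1_2019} has already done the heavy lifting of bounding $J$, $\BB_{-1}$, $\BC_{-1}$, and their derivatives in the appropriate norms. One minor technicality is that Lemma \ref{lemm:bent1} is stated on $\BR_+^N$; the $\BR_-^N$ analogue follows by reflection in $x_N$, and since $J$ and $\BC_{-1}$ are defined on all of $\BR^N$ no extension step is required.
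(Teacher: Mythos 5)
Your proof is correct and takes essentially the same route as the paper: the paper offers no detailed argument, stating only that the lemma follows from Lemma \ref{lemm:bent1} together with the bounds in \eqref{rema:jacob} and \eqref{0904:1_2019}, which is precisely the ingredient list you work out. Your Leibniz expansion of $\dv\CF(w_\pm)$ and $\nabla\CF(w_\pm)$ into order-two terms (handled via the $L_\infty$ smallness of $1-J$ and $\BC_{-1}$) and order-one terms with $L_r$ coefficients (handled via Lemma \ref{lemm:bent1} with $b=\pd w_\pm$), plus the observation that the $\BR_-^N$ case follows by reflection, is the intended bookkeeping.
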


From now on, we solve \eqref{s-eq:bent-prtb} by using the contraction mapping principle
together with Theorem \ref{theo:2-bdd}.
Let $\lambda_1\geq 1$ and $\lambda\in\Sigma_{\sigma,\lambda_1}$
for some $\sigma\in(0,\pi/2)$. 
We set $X_{q,\lambda}=H_q^2(\BR_+^N)\times H_q^2(\BR_-^N)$ endowed with the norm:
\begin{equation*}
\|(w_+,w_-)\|_{X_{q,\lambda}}=\sum_{\Fs\in\{+,-\}}
\|(\lambda w_\Fs,\lambda^{1/2}\nabla w_\Fs,\nabla^2 w_\Fs)\|_{L_q(\BR_\Fs^N)},
\end{equation*}
and also for $R>0$
\begin{equation*}
X_{q,\lambda}(R)=\{(w_+,w_-)\in X_{q,\lambda} : \|(w_+,w_-)\|_{X_{q,\lambda}}\leq R\}.
\end{equation*}
The next lemma then follows from Lemma \ref{lemm:bent-reminder} immediately.

\begin{lemm}\label{lemm:bent-reminder-2}
Let $\lambda_1$ and $\lambda$ be as above.
Let $q\in(1,\infty)$, $r\in(N,\infty)$, and $\vps>0$.
Assume that $q\leq r$.
Then, for any $w_\pm\in H_q^2(\BR_\pm^N)$,
\begin{align*}
\sum_{\Fs\in\{+,-\}}\|\nabla \CF(w_\Fs)\|_{L_q(\BR_\Fs^N)}
&\leq \left\{C_7(M_1+\vps)+\alpha_{M_2,r,\vps}\lambda_1^{-1/2}\right\}\|(w_+,w_-)\|_{X_{q,\lambda}}, \\
\sum_{\Fs\in\{+,-\}}\|\lambda^{1/2} \CF(w_\Fs)\|_{L_q(\BR_\Fs^N)}
&\leq C_7 M_1\|(w_+,w_-)\|_{X_{q,\lambda}}, \\
\sum_{\Fs\in\{+,-\}}\|\rho_\Fs\CG_\lambda (w_\Fs)\|_{L_q(\BR_\Fs^N)}
&\leq C_7 M_1 \|(w_+,w_-)\|_{X_{q,\lambda}}, \notag 
\end{align*}
and also
\begin{align*}
&\sum_{\Fs\in\{+,-\}}\left\|\left(\CF(w_\Fs),|\lambda|^{-1/2}\rho_\Fs\CG_\lambda(w_\Fs)\right)\right\|_{L_q(\BR_\Fs^N)}  \\
&\leq C_7 M_1 \sum_{\Fs\in\{+,-\}}\|(\lambda^{1/2}w_\Fs,\nabla w_\Fs)\|_{L_q(\BR_\Fs^N)}.
\end{align*}
Here $C_7$ and $\alpha_{M_2,r,\vps}$ are the same positive constants as in Lemma $\ref{lemm:bent-reminder}$. 
\end{lemm}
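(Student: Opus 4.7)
The plan is to deduce all four bounds directly from Lemma~\ref{lemm:bent-reminder} by rewriting the various norms of $w_\pm$ in terms of the $X_{q,\lambda}$-norm and using $|\lambda|\geq\lambda_1\geq 1$ to trade a factor $|\lambda|^{-1/2}$ for $\lambda_1^{-1/2}$ when that is needed. Linearity of $\CF$ and $\CG_\lambda$, recorded in Remark~\ref{rema:linear}, permits the two components $w_+$ and $w_-$ to be treated separately and summed at the end.

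For the first inequality I would apply the first bound in Lemma~\ref{lemm:bent-reminder} to each $w_\Fs$, which yields
\[
\|\nabla\CF(w_\Fs)\|_{L_q(\BR_\Fs^N)}\leq C_7(M_1+\vps)\|\nabla^2 w_\Fs\|_{L_q(\BR_\Fs^N)}+\alpha_{M_2,r,\vps}\|\nabla w_\Fs\|_{L_q(\BR_\Fs^N)}.
\]
The first term on the right is dominated by $\|(w_+,w_-)\|_{X_{q,\lambda}}$ directly from the definition of the norm. For the second term, write $\|\nabla w_\Fs\|_{L_q(\BR_\Fs^N)}=|\lambda|^{-1/2}\|\lambda^{1/2}\nabla w_\Fs\|_{L_q(\BR_\Fs^N)}\leq\lambda_1^{-1/2}\|(w_+,w_-)\|_{X_{q,\lambda}}$, using $|\lambda|\geq\lambda_1$. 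Summing over $\Fs\in\{+,-\}$ then produces the claimed estimate.

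The second and third inequalities follow by multiplying the $L_q$-bounds in Lemma~\ref{lemm:bent-reminder} by $|\lambda|^{1/2}$ and by $1$ respectively: $\|\lambda^{1/2}\CF(w_\Fs)\|_{L_q(\BR_\Fs^N)}\leq C_7 M_1\|\lambda^{1/2}\nabla w_\Fs\|_{L_q(\BR_\Fs^N)}$ and $\|\rho_\Fs\CG_\lambda(w_\Fs)\|_{L_q(\BR_\Fs^N)}\leq C_7 M_1\|\lambda w_\Fs\|_{L_q(\BR_\Fs^N)}$, each of which is at most $\|(w_+,w_-)\|_{X_{q,\lambda}}$. For the fourth, note $|\lambda|^{-1/2}\|\rho_\Fs\CG_\lambda(w_\Fs)\|_{L_q(\BR_\Fs^N)}\leq C_7 M_1\|\lambda^{1/2}w_\Fs\|_{L_q(\BR_\Fs^N)}$ and combine with $\|\CF(w_\Fs)\|_{L_q(\BR_\Fs^N)}\leq C_7 M_1\|\nabla w_\Fs\|_{L_q(\BR_\Fs^N)}$ to recover the right-hand side after summation. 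Since the argument is essentially bookkeeping with no analytic content beyond Lemma~\ref{lemm:bent-reminder}, I do not expect a genuine obstacle; the only point that requires some care is the correct use of $\lambda_1\geq 1$ in the first estimate to absorb the leftover factor $|\lambda|^{-1/2}$.
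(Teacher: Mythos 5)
Your argument is correct and is exactly what the paper has in mind; the paper simply asserts that the lemma ``follows from Lemma~\ref{lemm:bent-reminder} immediately'' without writing out the bookkeeping. Your rewriting of $\|\nabla w_\Fs\|$ as $|\lambda|^{-1/2}\|\lambda^{1/2}\nabla w_\Fs\|$ and use of $|\lambda|\geq\lambda_1$ is precisely the intended trade-off that produces the $\alpha_{M_2,r,\vps}\lambda_1^{-1/2}$ term.
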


Let us define $K_\lambda$ and $L_\lambda$ as follows:
\begin{align*}
K_\lambda
&=
\left\{\begin{aligned}
&\sum_{\Fs\in\{+,-\}}
\|(\dv\BF_\Fs, g_\Fs, \lambda^{1/2}F_{\Fs N},\nabla F_{\Fs N},\lambda^{1/2}h_\Fs,\nabla h_\Fs)\|_{L_q(\BR_\Fs^N)}, \\
&\sum_{\Fs\in\{+,-\}}
\|(\dv\BF_\Fs, g_\Fs,\lambda^{1/2}h_\Fs,\nabla h_\Fs)\|_{L_q(\BR_\Fs^N)} 
\text{ when $\wtd\Bn\cdot(\wtd\Bf_+-\wtd\Bf_-)=0$ on $\wtd\Sigma$,} 
\end{aligned}\right.
\\
L_\lambda
&=
\sum_{\Fs\in\{+,-\}}
\left\|\left(\BF_\Fs,|\lambda|^{-1/2}g_\Fs,h_\Fs, |\lambda|^{-1/2}\nabla h_\Fs\right)\right\|_{L_q(\BR_\Fs^N)}.
\end{align*}
Fix a positive number $S$ such that $K_\lambda\leq S/2$.
Let $C_1$ be the same positive constant as in Theorem \ref{theo:2-bdd},
and let $(w_+,w_-)\in X_{q,\lambda}(C_1 S)$.
We consider 
\begin{equation}\label{s-eq:bent-3}
\left\{\begin{aligned}
\rho_\pm\lambda z_\pm-\Delta z_\pm
&=-\dv(\BF_\pm+\CF(w_\pm))+\rho_\pm\CG_\lambda(w_\pm)+g_\pm
&& \text{in $\BR_\pm^N$,} \\
\rho_+ z_+&=\rho_- z_- && \text{on $\BR_0^N$,} \\
\Bn_0\cdot\nabla  (z_+- z_-)
&=\Bn_0\cdot\{(\BF_++\CF(w_+))-(\BF_-+\CF(w_-))\} \\  
&+h_+-h_- && \text{on $\BR_0^N$.}
\end{aligned}\right.
\end{equation}
Noting $\Bn_0\cdot(\BF_+-\BF_-)=0$ on $\BR_0^N$ when $\wtd\Bn\cdot(\wtd\Bf_+-\wtd\Bf_-)=0$ on $\wtd\Sigma$
by virtue of \eqref{0920:4_2019},
one has by Theorem \ref{theo:2-bdd}
unique solutions $z_\pm\in H_q^2(\BR_\pm^N)$ of \eqref{s-eq:bent-3} satisfying
\begin{align*}
&\|(z_+,z_-)\|_{X_{q,\lambda}}  \\
&\leq C_1 \left(K_\lambda+\sum_{\Fs\in\{+,-\}}\|(\dv\CF(w_\Fs),\rho_\Fs\CG_\lambda(w_\Fs),
\lambda^{1/2}\CF_{N}(w_\Fs), \nabla\CF_{ N}(w_\Fs))\|_{L_q(\BR_\Fs^N)}\right).
\end{align*}
Combining this inequality with Lemma \ref{lemm:bent-reminder-2} furnishes 
an a priori estimate for the solutions of \eqref{s-eq:bent-3} as follows: 
\begin{align}\label{0903:5_2019}
\|(z_+,z_-)\|_{X_{q,\lambda}} 
&\leq C_1\left\{K_\lambda+4\left(C_7(M_1+\vps)+\alpha_{M_2,r,\vps}\lambda_1^{-1/2}\right)\|(w_+,w_-)\|_{X_{q,\lambda}}\right\} \\
&\leq C_1\Big\{K_\lambda+4C_1 S\left(C_7(M_1+\vps)+\alpha_{M_2,r,\vps}\lambda_1^{-1/2}\right)\Big\}. \notag
\end{align}
Choosing $M_1$, $\vps$ small enough and $\lambda_1$ large enough so that
\begin{equation}\label{0903:6_2019}
4C_1 C_7(M_1+\vps) \leq \frac{1}{4}, \quad 4C_1 \alpha_{M_2,r,\vps}\lambda_1^{-1/2} \leq \frac{1}{4},
\end{equation}
we see from \eqref{0903:5_2019} that
$$
\|(z_+,z_-)\|_{X_{q,\lambda}}\leq C_1\left(\frac{S}{2}+\frac{S}{2}\right)=C_1 S.
$$
Thus we can define a map $\Psi: X_{q,\lambda}(C_1S)\to X_{q,\lambda}(C_1S)$ by $\Psi(w_+,w_-)=(z_+,z_-)$.
Analogously, recalling Remark \ref{rema:linear} \eqref{rema:linear-1}
and using the a priori estimate \eqref{0903:5_2019}, 
we can show that $\Psi$ is a contraction mapping on $X_{q,\lambda}(C_1 S)$,
which furnishes that there exists a fixed point $(w_+^*,w_-^*)\in X_{q,\lambda}(C_1 S)$, i.e.
$(w_{+}^*,w_-^*)=\Psi(w_+^*,w_-^*)$.
Then $(v_+,v_-)=(w_{+}^*,w_-^*)$ is a solution to \eqref{s-eq:bent-prtb} and satisfies by 
the first inequality of \eqref{0903:5_2019}, together with \eqref{0903:6_2019},
\begin{equation}\label{0903:19_2019}
\|(v_+,v_-)\|_{X_q,\lambda}\leq 2 C_1 K_\lambda. 
\end{equation}
In addition, by \eqref{est:2-whole},
\begin{align*}
&\sum_{\Fs\in\{+,-\}}\|(\lambda^{1/2}v_\Fs,\nabla v_\Fs)\|_{L_q(\BR_\Fs^N)}  \\
&\leq C_1\left(L_\lambda+\sum_{\Fs\in\{+,-\}}
\left\|\left(\CF(v_\Fs),|\lambda|^{-1/2}\rho_\Fs \CG_\lambda(v_\Fs)\right)\right\|_{L_q(\BR_\Fs^N)}\right),
\end{align*}
which, combined with Lemma \ref{lemm:bent-reminder-2} and \eqref{0903:6_2019},  furnishes
\begin{align*}
\sum_{\Fs\in\{+,-\}}\|(\lambda^{1/2}v_\Fs,\nabla v_\Fs)\|_{L_q(\BR_\Fs^N)} 
&\leq C_1\left(L_\lambda + C_7M_1\sum_{\Fs\in\{+,-\}}\|(\lambda^{1/2}v_\Fs,\nabla v_\Fs)\|_{L_q(\BR_\Fs^N)}\right) \\
&\leq  C_1 L_\lambda + \frac{1}{2}\sum_{\Fs\in\{+,-\}}\|(\lambda^{1/2}v_\Fs,\nabla v_\Fs)\|_{L_q(\BR_\Fs^N)}.
\end{align*}
It therefore holds that 
\begin{equation}\label{0903:9_2019}
\sum_{\Fs\in\{+,-\}}\|(\lambda^{1/2}v_\Fs,\nabla v_\Fs)\|_{L_q(\BR_\Fs^N)}
\leq 2 C_1 L_\lambda.
\end{equation}
The uniqueness of solutions of \eqref{s-eq:bent-prtb}
also follows from the a priori estimate \eqref{0903:5_2019} together with \eqref{0903:6_2019}.

Finally, setting $\wtd v_\pm= v_\pm\circ \Phi^{-1}$,
we observe that $\wtd v_\pm\in H_q^2(\wtd\Omega_\pm)$ are solutions to \eqref{s-eq:bent-1}
and that by \eqref{bent:condi1}, \eqref{rema:jacob}, and Lemma \ref{lemm:bent1} with $\vps=1$
\begin{equation*}
\sum_{\Fs\in\{+,-\}}\|(\lambda \wtd v_\Fs,\lambda^{1/2}\nabla \wtd v_\Fs,\nabla^2 \wtd v_\Fs)\|_{L_q(\wtd\Omega_\Fs)} \leq 
C\|(v_+,v_-)\|_{X_{q,\lambda}}
\end{equation*}
for some positive constant $C=C(M_2,N,q,r)$ independent of $M_1$, $\lambda_1$, and $\lambda$.
Combining this inequality with \eqref{0903:19_2019},
together with \eqref{0824:12_2019}-\eqref{0903:15_2019},
furnishes \eqref{0903:11_2019} and \eqref{0903:11_2019-2}.
On the other hand, since
\begin{equation*}
\sum_{\Fs\in\{+,-\}}\|(\lambda^{1/2} \wtd v_\Fs,\nabla \wtd v_\Fs)\|_{L_q(\wtd\Omega_\Fs)} \leq 
C\sum_{\Fs\in\{+,-\}}\|(\lambda^{1/2}v_\Fs,\nabla v_\Fs)\|_{L_q(\BR_{\Fs}^N)}
\end{equation*}
for some positive constant $C$ independent of $M_1$, $M_2$, $\lambda_1$, $\lambda$, and $r$,
the required estimate \eqref{0903:12_2019} follows from \eqref{0903:9_2019} together with \eqref{0903:15_2019}.
The uniqueness of solutions of \eqref{s-eq:bent-1}
follows from the uniqueness of solutions of the equivalent system \eqref{s-eq:bent-prtb}.
This completes the proof of the Theorem \ref{theo:bent}.

\subsection{Proof of Theorem \ref{theo:bent2}}
The uniqueness of solutions of \eqref{s-eq:bent-2} follows from the existence of solutions of \eqref{s-eq:bent-2}
similarly to the proof of Theorem \ref{theo:2-bdd_2} \eqref{theo:2-bdd_2-2},
so that we prove the existence of solutions of \eqref{s-eq:bent-2}
by following the idea of  one-phase flows introduced in 
\cite{Shibata-book} in what follows.

Setting $v_\pm=\wtd v_\pm\circ \Phi$ and $\Bf=\wtd\Bf_\pm\circ \Phi$ in \eqref{s-eq:bent-2},
we have, similarly to the previous subsection, 
\begin{equation}\label{eq:redu-zero}
\left\{\begin{aligned}
\Delta v_\pm &=\dv(\BF_\pm+\CF(v_\pm)) && \text{in $\BR_\pm^N$,} \\
\rho_+v_+&=\rho_-v_- && \text{on $\BR_0^N$,} \\
\Bn_0\cdot\nabla(v_+-v_-)&=\Bn_0\cdot\{(\BF_++\CF(v_+))-(\BF_-+\CF(v_-))\} && \text{on $\BR_0^N$,}
\end{aligned}\right.
\end{equation}
where $\BF_\pm$ and $\CF(v_\pm)$ are given in \eqref{0903:3_2019}.
By Theorem \ref{theo:2-bdd_2},
one sees for $w_\pm\in\wht H_q^1(\BR_\pm^N)\cap \wht H_q^2(\BR_\pm^N)$ that
there is a unique solution $z_\pm \in \wht H_q^1(\BR_\pm^N)\cap \wht H_q^2(\BR_\pm^N)$ to
\begin{equation}\label{eq:redu-zero-2}
\left\{\begin{aligned}
\Delta z_\pm &=\dv(\BF_\pm+\CF(w_\pm)) && \text{in $\BR_\pm^N$,} \\
\rho_+z_+&=\rho_-z_- && \text{on $\BR_0^N$,} \\
\Bn_0\cdot\nabla(z_+-z_-)&=\Bn_0\cdot\{(\BF_++\CF(w_+))-(\BF_-+\CF(w_-))\} && \text{on $\BR_0^N$.}
\end{aligned}\right.
\end{equation}
In addition, by \eqref{0819:1_2019} and \eqref{0819:2_2019},
\begin{align*}
\sum_{\Fs\in\{+,-\}}\|\nabla^2 z_\Fs\|_{L_q(\BR_\Fs^N)} 
&\leq C_2\sum_{\Fs\in\{+,-\}} \|(\dv \BF_\Fs,\nabla F_{\Fs N},\dv\CF(w_\Fs),\nabla\CF_N(w_\Fs))\|_{L_q(\BR_\Fs^N)}, \\
\sum_{\Fs\in\{+,-\}}\|\nabla z_\Fs\|_{L_q(\BR_\Fs^N)}
&\leq C_2\sum_{\Fs\in\{+,-\}}\|(\BF_\Fs,\CF(w_\Fs))\|_{L_q(\BR_\Fs^N)}, \notag
\end{align*}
which, combined with Lemma \ref{lemm:bent-reminder}, 
furnishes a priori estimates for the solutions of \eqref{eq:redu-zero-2} as follows:
\begin{align}
&\sum_{\Fs\in\{+,-\}}\|\nabla^2 z_\Fs\|_{L_q(\BR_\Fs^N)}
\leq C_2 \sum_{\Fs\in\{+,-\}}\Big(\|(\dv\BF_\Fs,\nabla F_{\Fs N})\|_{L_q(\BR_\Fs^N)} \label{bent:a-priori-1} \\
&+C_7(M_1+\vps)\|\nabla^2 w_\Fs\|_{L_q(\BR_\Fs^N)}
+\alpha_{M_2,r,\vps}\|\nabla w_\Fs\|_{L_q(\BR_\Fs^N)}\Big), \notag \\
&\sum_{\Fs\in\{+,-\}}\|\nabla z_\Fs\|_{L_q(\BR_\Fs^N)}
\leq C_2\sum_{\Fs\in\{+,-\}}\Big(\|\BF_\Fs\|_{L_q(\BR_\Fs^N)}+C_7 M_1\|\nabla w_\Fs\|_{L_q(\BR_\Fs^N)}\Big). \label{bent:apriori-2}
\end{align}

\begin{rema}
If $\wtd\Bn\cdot(\wtd\Bf_+-\wtd\Bf_-)=0$ on $\wtd\Sigma$,
then $\Bn_0\cdot(\BF_+-\BF_-)=0$ on $\BR_0^N$ by \eqref{0920:4_2019}.
When $\wtd\Bn\cdot(\wtd\Bf_+-\wtd\Bf_-)=0$ on $\wtd\Sigma$,
one thus has by \eqref{0824:1_2019}
$$
\sum_{\Fs\in\{+,-\}}\|\nabla^2 z_\Fs\|_{L_q(\BR_\Fs^N)} 
\leq C_2\sum_{\Fs\in\{+,-\}}\|(\dv\BF_\Fs,\dv\CF(w_\Fs))\|_{L_q(\BR_\Fs^N)},
$$
and then the a priori estimate \eqref{bent:a-priori-1} is replaced by 
\begin{align}\label{bent:apriori-3}
&\sum_{\Fs\in\{+,-\}}\|\nabla^2 z_\Fs\|_{L_q(\BR_\Fs^N)} 
\leq C_2\sum_{\Fs\in\{+,-\}} \Big(\|\dv \BF_\Fs\|_{L_q(\BR_\Fs^N)} \\
&+C_7(M_1+\vps)\|\nabla^2 w_\Fs\|_{L_q(\BR_\Fs^N)}
+\alpha_{M_2,r,\vps}\|\nabla w_\Fs\|_{L_q(\BR_\Fs^N)}\Big). \notag
\end{align}
\end{rema}

From now on, we prove the existence of solutions $v_\pm$ to \eqref{eq:redu-zero}.
Let $v_\pm^{(0)}=0$ and $v_\pm^{(j)}\in\wht H_q^1(\BR_\pm^N)\cap\wht H_q^2(\BR_\pm^N)$, $j\geq 1$, be
the unique solutions to
\begin{equation}\label{eq:iteration}
\left\{\begin{aligned}
\Delta v_\pm^{(j)} &=\dv(\BF_\pm+\CF(v_\pm^{(j-1)})) && \text{in $\BR_\pm^N$,} \\
\rho_+v_+^{(j)}&=\rho_-v_-^{(j)} && \text{on $\BR_0^N$,} \\
\Bn_0\cdot\nabla(v_+^{(j)}-v_-^{(j)})&=\Bn_0\cdot\{(\BF_++\CF(v_+^{(j-1)})) \\
&-(\BF_-+\CF(v_-^{(j-1)}))\} && \text{on $\BR_0^N$.}
\end{aligned}\right.
\end{equation}
By \eqref{bent:a-priori-1} and \eqref{bent:apriori-2}, $v_\pm^{(j)}$ satisfy
\begin{align}
&\sum_{\Fs\in\{+,-\}}\|\nabla^2 v_\Fs^{(j)}\|_{L_q(\BR_\Fs^N)}
\leq C_2 \sum_{\Fs\in\{+,-\}}\Big(\|(\dv\BF_\Fs,\nabla F_{\Fs N})\|_{L_q(\BR_\Fs^N)} \label{0904:5_2019} \\
&+C_7(M_1+\vps)\|\nabla^2 v_\Fs^{(j-1)}\|_{L_q(\BR_\Fs^N)} 
+\alpha_{M_2,r,\vps}\|\nabla v_\Fs^{(j-1)}\|_{L_q(\BR_\Fs^N)}\Big), \notag \\
&\sum_{\Fs\in\{+,-\}}\|\nabla v_\Fs^{(j)}\|_{L_q(\BR_\Fs^N)} \label{0904:6_2019} \\
&\leq C_2\sum_{\Fs\in\{+,-\}}\Big(\|\BF_\Fs\|_{L_q(\BR_\Fs^N)}+C_7 M_1\|\nabla v_\Fs^{(j-1)}\|_{L_q(\BR_\Fs^N)}\Big).  \notag
\end{align}
Inductively, it follows from \eqref{0904:6_2019} and $v_\pm^{(0)}=0$ that
\begin{equation}\label{0920:15_2019}
\sum_{\Fs\in\{+,-\}}\|\nabla v_\Fs^{(j)}\|_{L_q(\BR_\Fs^N)}  
\leq X_j\sum_{\Fs\in\{+,-\}}\|\BF_\Fs\|_{L_q(\BR_\Fs^N)}, 
\quad X_j=C_2\sum_{k=0}^{j-1}(C_2C_7M_1)^{k},
\end{equation}
which, inserted into \eqref{0904:5_2019}, furnishes 
\begin{align*}
&\sum_{\Fs\in\{+,-\}}\|\nabla^2 v_\Fs^{(j)}\|_{L_q(\BR_\Fs^N)}
\leq C_2 \sum_{\Fs\in\{+,-\}}\Big(\|(\dv\BF_\Fs,\nabla F_{\Fs N})\|_{L_q(\BR_\Fs^N)} \\
&+C_7(M_1+\vps)\|\nabla^2 v_\Fs^{(j-1)}\|_{L_q(\BR_\Fs^N)} 
+\alpha_{M_2,r,\vps} X_j \|\BF_\Fs\|_{L_q(\BR_\Fs^N)} \Big).
\end{align*}
Inductively, it follows from this inequality and $v_\pm^{(0)}=0$ that
\begin{align}\label{0920:17_2019}
\sum_{\Fs\in\{+,-\}}\|\nabla^2 v_\Fs^{(j)}\|_{L_q(\BR_\Fs^N)}
&\leq 
Y_j \sum_{\Fs\in\{+,-\}}\|(\dv\BF_\Fs,\nabla F_{\Fs N})\|_{L_q(\BR_\Fs^N)} \\
&+Z_j\sum_{\Fs\in\{+,-\}}\|\BF_\Fs\|_{L_q(\BR_\Fs^N)}, \notag
\end{align}
where
\begin{equation*}
Y_j=C_2\sum_{k=0}^{j-1}(C_2C_7(M_1+\vps))^k, \quad 
Z_j=C_2\alpha_{M_2,r,\vps}\sum_{k=0}^{j-1}(C_2C_7(M_1+\vps))^k X_{j-k}.
\end{equation*}

Let $u^{(j)}=\rho_+v_+^{(j)}\mathds{1}_{\BR_+^N}+\rho_-v_-^{(j)}\mathds{1}_{\BR_-^N}$.
One then sees by $\rho_+ v_+^{(j)}=\rho_-v_-^{(j)}$ on $\BR_0^N$ that $u^{(j)}\in\wht H_q^1(\BR^N)$ with
$
\nabla u^{(j)} = \rho_+ (\nabla v_+^{(j)})\mathds{1}_{\BR_+^N}+\rho_- (\nabla v_-^{(j)})\mathds{1}_{\BR_-^N}.
$
On the other hand,
recalling Remark \ref{rema:linear} \eqref{rema:linear-1}, we have by \eqref{bent:apriori-2}
\begin{equation*}
\sum_{\Fs\in\{+,-\}}\|\nabla(v_\Fs^{(j)}-v_\Fs^{(j-1)})\|_{L_q(\BR_\Fs^N)} \leq C_2C_7 M_1
\sum_{\Fs\in\{+,-\}}\|\nabla (v_\Fs^{(j-1)}-v_\Fs^{(j-2)})\|_{L_q(\BR_\Fs^N)},
\end{equation*}
which, combined with 
\begin{align*}
&\frac{1}{C_8}\sum_{\Fs\in\{+,-\}}\|\nabla (v_{\Fs}^{(j)}-v_{\Fs}^{(j-1)})\|_{L_q(\BR_\Fs^N)} \\
&\leq \|\nabla (u^{(j)}-u^{(j-1)})\|_{L_q(\BR^N)}
\leq C_8 \sum_{\Fs\in\{+,-\}}\|\nabla (v_{\Fs}^{(j)}-v_{\Fs}^{(j-1)})\|_{L_q(\BR_\Fs^N)}
\end{align*}
for some positive constants $C_8\geq 1$ depending only on $q$, $\rho_+$, and $\rho_-$, furnishes 
\begin{equation*}
\|\nabla(u^{(j)}-u^{(j-1)})\|_{L_q(\BR^N)}
\leq C_2C_7(C_8)^2 M_1\|\nabla (u^{(j-1)}-u^{(j-2)})\|_{L_q(\BR^N)}.
\end{equation*}
Choose $M_1$ and $\vps$ small enough so that 
\begin{equation*}
C_2C_7(C_8)^2 M_1 \leq \frac{1}{2}, \quad C_2C_7(M_1+\vps)\leq \frac{1}{2}.
\end{equation*}
Then the above inequality for $u^{(j)}-u^{(j-1)}$ implies that 
$\{u^{(j)}\}_{j=1}^\infty$ is a Cauchy sequence in $\wht H_q^1(\BR^N)$,
and thus there exists $u\in \wht H_q^1(\BR^N)$ such that
\begin{equation}\label{0920:11_2019}
\lim_{j\to\infty}\|\nabla(u^{(j)}-u)\|_{L_q(\BR^N)}=0.
\end{equation}
Furthermore, we see that
\begin{align*}
X_j \leq C_2\sum_{k=0}^{\infty}2^{-k} =2C_2, \quad Y_j\leq  2C_2, \quad
Z_j \leq C_2 \alpha_{M_2,r,\vps} \cdot 2 \cdot 2C_2,
\end{align*}
which, combined with \eqref{0920:15_2019} and \eqref{0920:17_2019}, furnishes
\begin{align}
\sum_{\Fs\in\{+,-\}}\|\nabla v_\Fs^{(j)}\|_{L_q(\BR_\Fs^N)}  
&\leq 2C_2\sum_{\Fs\in\{+,-\}}\|\BF_\Fs\|_{L_q(\BR_\Fs^N)}, \label{0921:3_2019} \\
\sum_{\Fs\in\{+,-\}}\|\nabla^2 v_\Fs^{(j)}\|_{L_q(\BR_\Fs^N)} 
&\leq 2C_2 K, \label{0921:4_2019}
\end{align}
where we have set
\begin{equation*}
K=\sum_{\Fs\in\{+,-\}}\|(\dv\BF_\Fs,\nabla F_{\Fs N})\|_{L_q(\BR_\Fs^N)} 
+2C_2 \alpha_{M_2,r,\vps}\sum_{\Fs\in\{+,-\}}\|\BF_\Fs\|_{L_q(\BR_\Fs^N)}.
\end{equation*}

Let $u_+$ and $u_-$ be respectively the restriction of $u$ on $\BR_+^N$
and the restriction of $u$ on $\BR_-^N$. It then holds that $u_\pm \in \wht H_q^1(\BR_\pm^N)$ and
\begin{equation}\label{properties:u}
\nabla u=(\nabla u_+)\mathds{1}_{\BR_+^N}+(\nabla u_-)\mathds{1}_{\BR_-^N},
\quad u_+=u_- \text{ on $\BR_0^N$.}
\end{equation}
Let us define $v_\pm = \rho_\pm^{-1}u_\pm$. 
By the second property of \eqref{properties:u},
\begin{equation}\label{0920:10_2019}
\rho_+ v_+ = \rho_- v_- \quad \text{on $\BR_0^N$.}
\end{equation}
In addition, we see by the definition of $u^{(j)}$ and the first property of \eqref{properties:u} that
\begin{equation*}
\|\nabla(v_\pm^{(j)}-v_\pm)\|_{L_q(\BR_\pm^N)}
=\rho_\pm^{-1}\|\nabla(u^{(j)}-u)\|_{L_q(\BR_\pm^N)}
\end{equation*}
which, combined with \eqref{0920:11_2019}, furnishes
\begin{equation}\label{0920:12_2019}
\lim_{j\to \infty}\|\nabla(v_\pm^{(j)}-v_\pm)\|_{L_q(\BR_\pm^N)}=0.
\end{equation}
Taking the the limit: $j\to \infty$ in \eqref{0921:3_2019} thus implies that
$v_\pm$ satisfy
\begin{equation}\label{0904:7_2019}
\sum_{\Fs\in\{+,-\}}\|\nabla v_\Fs\|_{L_q(\BR_\Fs^N)}  
\leq 2C_2\sum_{\Fs\in\{+,-\}}\|\BF_\Fs\|_{L_q(\BR_\Fs^N)},
\end{equation}

Next, we consider the higher regularity of $v_\pm$.
Let $k,l=1,\dots,N$. By \eqref{0921:4_2019},
there exist $v_{\pm,kl}\in L_q(\BR_\pm^N)$ such that $\pd_k\pd_l v_\pm^{(j)} \to v_{\pm,kl}$
weakly in $L_q(\BR_\pm^N)$ as $j\to\infty$.
Then we can prove $\pd_k\pd_l v_\pm=v_{\pm,kl}\in L_q(\BR_\pm^N)$
by using the convergence in distribution.
Consequently, $\pd_k\pd_l v_\pm^{(j)}\to \pd_k\pd_l v_\pm$ weakly in $L_q(\BR_\pm^N)$ as $j\to \infty$.
It thus follows from \eqref{0921:4_2019} that
\begin{equation}\label{0921:7_2019}
\|\pd_k\pd_l v_\pm\|_{L_q(\BR_\pm^N)} \leq \liminf_{j\to \infty}\|\pd_k\pd_lv_{\pm}^{(j)}\|_{L_q(\BR_\pm^N)} \leq 2C_2K.
\end{equation}

Finally, we prove that $v_\pm$ satisfies \eqref{eq:redu-zero} by the limit: $j\to\infty$ in \eqref{eq:iteration}.
The first line of \eqref{eq:redu-zero} is satisfied immediately,
and the third line of \eqref{eq:redu-zero} can be proved by
Remark \ref{rema:linear} \eqref{rema:linear-1}, Lemma \ref{lemm:bent-reminder}, and
the inequality (cf. \cite[Proposition 16.2]{Di2016}):
\begin{equation*}
\|f_\pm \|_{L_q(\BR_0^N)} 
\leq q^{1/q} \|f_\pm\|_{L_q(\BR_\pm^N)}^{1-1/q}\|\nabla f_\pm\|_{L_q(\BR_\pm^N)}^{1/q}
\quad \text{for any $f_\pm\in H_q^1(\BR_\pm^N)$},
\end{equation*}
together with \eqref{0921:4_2019}, \eqref{0920:12_2019}, and \eqref{0921:7_2019}.
In addition, the second line of \eqref{eq:redu-zero} is already obtained in \eqref{0920:10_2019}.
Thus, setting $\wtd v_\pm=v_\pm\circ \Phi$, we see that $\wtd v_\pm$ are solutions to \eqref{s-eq:bent-2}
and that the required estimates \eqref{0920:1_2019} and \eqref{0920:2_2019}
follow from \eqref{0904:7_2019} and \eqref{0921:7_2019}
in the same manner as in the last part of Subsection \ref{subset:1_bent}.
When $\wtd\Bn\cdot(\wtd\Bf_+-\wtd\Bf_-)=0$ on $\wtd\Sigma$,
we can obtain \eqref{0920:3_2019} by using \eqref{bent:apriori-3} instead of \eqref{bent:a-priori-1}
in the above argument.
This completes the proof of Theorem \ref{theo:bent2}.

\subsection{Weak elliptic problem with bent interface}\label{subsec:whole2-weak}
Throughout this subsection, we assume that $r\in(N,\infty)$ and 
$\wtd\Omega_\pm$, $\wtd\Sigma$ are given in Theorem \ref{theo:bent2}.
For $\Bf\in L_q(\BR^N\setminus\wtd\Sigma)^N$ with $q\in(1,\infty)$,
this subsection considers the unique solvability of the following weak elliptic problem:
Find $u\in\wht H_q^1(\BR^N)$ such that
\begin{equation}\label{weakeq:whole-bent}
(\rho^{-1}\nabla u,\nabla\vph)_{\BR^N\setminus\wtd\Sigma}=(\Bf,\nabla\vph)_{\BR^N\setminus\wtd\Sigma}
\quad \text{for any $\vph\in\wht H_{q'}^1(\BR^N)$,}
\end{equation}
where $q'=q/(q-1)$ and $\rho=\rho_+\mathds{1}_{\wtd \Omega_+}+\rho_-\mathds{1}_{\wtd\Omega_-}$ for positive constants $\rho_\pm$.
In the same manner that we have obtained Theorem \ref{w-theo:whole} from Theorem \ref{theo:2-bdd_2},
we can obtain the following theorem from Theorem \ref{theo:bent2}.

\begin{theo}\label{w-theo:whole2}
Let $r$, $\wtd\Omega_\pm$, and $\wtd\Sigma$ be as above, and let $q\in(1,\infty)$.
Assume that $\max(q,q')\leq r$ for $q'=q/(q-1)$
and $\rho=\rho_+\mathds{1}_{\wtd \Omega_+}+\rho_-\mathds{1}_{\wtd\Omega_-}$ for constants $\rho_\pm>0$.
\begin{enumerate}[$(1)$]
\item {\bf Existence}.
Let $\Bf\in L_q(\BR^N\setminus\wtd \Sigma)^N$.
Then the weak elliptic problem \eqref{weakeq:whole-bent} admits a solution $u\in\wht H_q^1(\BR^N)$ satisfying
\begin{equation*}\label{est1:whole}
\|\nabla u\|_{L_q(\BR^N)} \leq 2C'(\rho_++\rho_-)\|\Bf\|_{L_q(\BR^N\setminus\wtd\Sigma)},
\end{equation*}
where $C'$ is the same constant as in Theorem $\ref{theo:bent2}$.
\item {\bf Uniqueness}.
If $u\in \wht H_q^1(\BR^N)$ satisfies
\begin{equation*}
(\rho^{-1}\nabla u,\nabla\vph)_{\BR^N\setminus\wtd\Sigma}=0 \quad \text{for any $\vph\in\wht H_{q'}^1(\BR^N)$,}
\end{equation*}
then $u=c$ for some constant $c$.
\end{enumerate}
\end{theo}

\section{Strong problems in bounded domains}\label{sec:bounded}
Throughout this section, we assume

\begin{assu}\label{assu:bdd}
\begin{enumerate}[{\rm (a)}]
\item
$r$ is a real number satisfying $r>N$.
\item
$G$ is a bounded domain with boundary $\Gamma$ of class $W_r^{2-1/r}$.
\item
$G_+$ is a subdomain of $G$ with boundary $\Sigma$ of class $W_r^{2-1/r}$ satisfying $\Sigma\cap \Gamma=\emptyset$.
\item
$G_-=G\setminus(G_+\cup\Sigma)$.
\end{enumerate}
\end{assu}

Let $\Bn$ be a unit normal vector on $\Sigma$ pointing from $G_+$ into $G_-$, and set
\begin{equation*}
F_q(G_\pm)=\{\Bf_\pm\in E_q(G_\pm): \Bn\cdot\Bf_\pm \in H_q^1(G_\pm)\}.
\end{equation*}
In this section, we consider the following strong elliptic problem: 
\begin{equation}\label{s-eq:1-bdd}
\left\{\begin{aligned}
\Delta v_\pm &=\dv\Bf_\pm && \text{in $G_\pm$,} \\
\rho_+v_+&=\rho_-v_- && \text{on $\Sigma$,} \\
\Bn\cdot  \nabla(v_+- v_-)
&=\Bn\cdot(\Bf_+-\Bf_-) && \text{on $\Sigma$,} \\
v_-&=0 && \text{on $\Gamma$.}
\end{aligned}\right.
\end{equation}
More precisely, we prove

\begin{theo}\label{theo:bdd}
Suppose that Assumption $\ref{assu:bdd}$ holds and $\rho_\pm$ are positive constants.
Let $q\in(1,\infty)$ with $\max(q,q')\leq r$ for $q'=q/(q-1)$.
\begin{enumerate}[$(1)$]
\item
{\bf Existence}.
Let $\Bf _\pm \in F_q(G_\pm)$.
Then the strong elliptic problem \eqref{s-eq:1-bdd} admits solutions $v_\pm\in H_q^2(G_\pm)$
satisfying 
\begin{align}
\|\nabla^2 v_\pm\|_{L_q(G_\pm)}
&\leq C \sum_{\Fs\in\{+,-\}}\left(\|\Bf_\Fs\|_{E_q(G_\Fs)}+\|\Bn\cdot\Bf_\Fs\|_{H_q^1(G_\Fs)}\right), \label{0922:1_2019} \\
\|v_\pm \|_{H_q^1(G_\pm)}
&\leq C \sum_{\Fs\in\{+,-\}}\|\Bf_\Fs\|_{L_q(G_\Fs)}, \label{0922:2_2019}
\end{align}
with a positive constant $C=C(N,q,r,\rho_+,\rho_-)$.
Additionally, if $\Bn\cdot(\Bf_+-\Bf_-)=0$ on $\Sigma$, then $v_\pm$ satisfy
\begin{equation}\label{0922:3_2019}
\|\nabla^2 v_\pm\|_{L_q(G_\pm)}
\leq C \sum_{\Fs\in\{+,-\}}\|\Bf_\Fs\|_{E_q(G_\Fs)}. 
\end{equation}
\item
{\bf Uniqueness}. If $v_\pm\in H_q^2(G_\pm)$ satisfy
\begin{equation*}
\Delta v_\pm=0 \text{ in $G_\pm$};
\ \rho_+ v_+= \rho_- v_-, \ \Bn\cdot \nabla( v_+-v_-)=0 \text{ on $\Sigma$};
\ v_-=0 \text{ on $\Gamma$,}
\end{equation*}
then $v_\pm=0$. 
\end{enumerate}
\end{theo}

\subsection{Strong elliptic problem with $\lambda$ in $G_\pm$}
To prove Theorem \ref{theo:bdd}, we consider 
\begin{equation}\label{eq:1-bdd}
\left\{\begin{aligned}
\rho_\pm\lambda v_\pm-\Delta v_\pm &= -\dv\Bf_\pm+ g_\pm && \text{in $G_\pm$,} \\
\rho_+ v_+&=\rho_- v_- && \text{on $\Sigma$,} \\
\Bn\cdot \nabla ( v_+- v_-)&=\Bn\cdot(\Bf_+-\Bf_-)+h_+-h_-  && \text{on $\Sigma$,} \\
v_-&=0 && \text{on $\Gamma$.}  
\end{aligned}\right.
\end{equation}
Note that \cite[Appendix B]{Shibata2018} already studies
the strong elliptic problem in $\BR_+^N$ with the Dirichlet boundary condition
in the case where $\lambda$ is taken into account,
so that resolvent estimates of solutions in the bent half-space case
are also available similarly to the proof of Theorem \ref{theo:bent}.
Combining that result with Theorem \ref{theo:bent} and the standard localization technique yields

\begin{theo}\label{theo:1-bdd}
Suppose that the same assumption as in Theorem $\ref{theo:bdd}$ holds, and let $\sigma\in(0,\pi/2)$.
Then there is a constant $\lambda_2\geq 1$ 
such that, for any $\lambda\in\Sigma_{\sigma,\lambda_2}$, 
$$
g_\pm\in L_q(G_\pm), \quad
\Bf_\pm\in F_q(G_\pm), \quad 
h_\pm\in H_q^1(G_\pm),
$$ 
the strong elliptic problem \eqref{eq:1-bdd} admits unique
solutions $v_\pm \in H_q^2(G_\pm)$.
In addition, the solutions $v_\pm$ satisfy
\begin{align*}
&\|(\lambda v_\pm ,\lambda^{1/2}\nabla v_\pm,\nabla^2 v_\pm)\|_{L_q(G_\pm)} \\
&\leq C\sum_{\Fs\in\{+,-\}}
\|(\dv\Bf_\Fs,g_\Fs,\lambda^{1/2}(\Bn\cdot\Bf_\Fs),\nabla(\Bn\cdot\Bf_\Fs),
\lambda^{1/2}h_\Fs,\nabla h_\Fs)\|_{L_q(G_\Fs)},
\end{align*}
and also
\begin{align*}
\|(\lambda^{1/2}v_\pm,\nabla v_\pm)\|_{L_q(G_\pm)} 
&\leq C\sum_{\Fs\in\{+,-\}}
\Big(\|\Bf_\Fs\|_{L_q(\Omega_\Fs)}+|\lambda|^{-1/2}\|g_+\|_{L_q(G_\Fs)} \\
&+\|h_\Fs\|_{L_q(G_\Fs)}+|\lambda|^{-1/2}\|\nabla h_\Fs\|_{L_q(G_\Fs)}\Big),
\end{align*}
where $C=C(N,q,r,\rho_+,\rho_-,\sigma)$ is a positive constant independent of $\lambda$.
Additionally, if $\Bn\cdot(\Bf_+-\Bf_-)=0$ on $\Sigma$, then $v_\pm$ satisfy 
\begin{align*}
&\|(\lambda v_\pm ,\lambda^{1/2}\nabla v_\pm,\nabla^2 v_\pm)\|_{L_q(G_\pm)} \\
&\leq C\sum_{\Fs\in\{+,-\}}
\|(\dv\Bf_\Fs,g_\Fs,
\lambda^{1/2}h_\Fs,\nabla h_\Fs)\|_{L_q(G_\Fs)}.
\end{align*}
\end{theo}

\subsection{Proof of Theorem \ref{theo:bdd}.}
We start with the following strong elliptic problem: 
\begin{equation}\label{eq:5-bdd}
\left\{\begin{aligned}
-\Delta v_\pm &= d_\pm  && \text{in $G_\pm$,} \\
\rho_+ v_+&= \rho_- v_- && \text{on $\Sigma$,} \\
\Bn\cdot \nabla ( v_+- v_-)&= 0 && \text{on $\Sigma$,} \\
v_-&=0 && \text{on $\Gamma$.}
\end{aligned}\right.
\end{equation}
Concerning this problem, we have

\begin{theo}\label{theo:3-bdd}
Suppose that the same assumption as in Theorem $\ref{theo:bdd}$ holds.
Then,  for any $d_\pm \in L_q(G_\pm)$,
the strong elliptic problem \eqref{eq:5-bdd} admits unique solutions $v_\pm \in H_q^2(G_\pm)$,
which satisfy
\begin{equation*}
\sum_{\Fs\in\{+,-\}}\|v_\Fs\|_{H_q^2(G_\Fs)}
\leq  C \sum_{\Fs\in\{+,-\}}\|d_\Fs\|_{L_q(G_\Fs)} 
\end{equation*}
for some positive constant $C=C(N,q,r,\rho_+,\rho_-)$.
\end{theo}

\begin{proof}
The proof is based on the Riesz-Schauder theory together with Theorem \ref{theo:1-bdd},
so that the detailed proof may be omitted. 
\end{proof}

Now we prove Theorem \ref{theo:bdd}. Let $\mu=2 \lambda_2$ for $\lambda_2$ introduced in Theorem \ref{theo:1-bdd}.
The Theorem \ref{theo:1-bdd} yields $w_\pm\in H_q^2(G_\pm)$ such that
\begin{equation*}
\left\{\begin{aligned}
\rho_\pm \mu w_\pm -\Delta w_\pm &= -\dv\Bf_\pm && \text{in $G_\pm$,} \\
\rho_+ w_+&=\rho_- w_- && \text{on $\Sigma$,} \\
\Bn\cdot\nabla (w_+- w_-)&=\Bn\cdot(\Bf_+-\Bf_-) && \text{on $\Sigma$,} \\
w_- &= 0 && \text{on $\Gamma$,}
\end{aligned}\right.
\end{equation*}
while Theorem \ref{theo:3-bdd} yields $z_\pm\in H_q^2(G_\pm)$ such that
\begin{equation*}
\left\{\begin{aligned}
-\Delta  z_\pm &= \rho_\pm \mu w_\pm && \text{in $G_\pm$,} \\
\rho_+ z_+ &= \rho_- z_- && \text{on $\Sigma$,} \\
\Bn\cdot\nabla (z_+-z_-) &= 0 && \text{on $\Sigma$,} \\
z_- &= 0 && \text{on $\Gamma$.}
\end{aligned}\right.
\end{equation*}
Thus $v_\pm=w_\pm+z_\pm$ become solutions to \eqref{s-eq:1-bdd} and satisfy
\eqref{0922:1_2019}-\eqref{0922:3_2019} by Theorems \ref{theo:1-bdd} and \ref{theo:3-bdd}.
The uniqueness of solutions of \eqref{s-eq:1-bdd}
is already proved in Theorem \ref{theo:3-bdd}.
This completes the proof of Theorem \ref{theo:bdd}.

\section{Proof of Theorems \ref{theo:main} and \ref{theo:main2}}\label{sec:exterior}
Throughout this section, we assume that $\Omega_\pm$ satisfy Assumption \ref{assu:1}.

First, let us introduce the notation used in this section.
For $L>0$, we set
\begin{equation*}
B_L=\{x\in\BR^N : |x|<L\}, \quad S_L=\{x\in\BR^N : |x|=L\}.
\end{equation*}
Fix $R>0$ such that $\Omega_+\cup\Sigma \subset B_R$.
We then define $G$, $\Gamma$, and $G_\pm$ as follows:
\begin{equation*}
G=B_{4R}, \quad \Gamma=S_{4R}, \quad  G_+=\Omega_+, \quad G_-=B_{4R}\setminus(\Omega_+\cup\Sigma).
\end{equation*}
Let $\vph$, $\psi_0$, and $\psi_\infty$ be functions in $C^\infty(\BR^N)$ satisfying
$0\leq \vph,\psi_0, \psi_\infty\leq 1$ and
\begin{align*}
\vph(x)&=
\left\{\begin{aligned}
&1 && \text{for $x\in B_{2R}$,} \\
&0 && \text{for $x\in \BR^N\setminus B_{3R}$,}
\end{aligned}\right. \\
\psi_0(x)&=
\left\{\begin{aligned}
& 1 && \text{for $x\in B_{(3+1/3)R}$,} \\
&0 && \text{for $x\in \BR^N\setminus B_{(3+2/3)R}$,}
\end{aligned}\right. \\
\psi_\infty(x)&=
\left\{\begin{aligned}
&0 && \text{for $x\in B_{(2-2/3)R}$,} \\
&1 && \text{for $x\in\BR^N\setminus B_{(2-1/3)R}$.}
\end{aligned}\right.
\end{align*}
In addition, we set
$\vph_0(x)=\vph(x)$ and $\vph_\infty(x)=1-\vph(x)$.
For $q\in(1,\infty)$ and $D\in\{\BR^N,G\}$, a Banach space $E_{q}(D\setminus\Sigma)$ is defined by
\begin{align*}
E_q(D\setminus\Sigma)
&=\{\Bf\in L_q(D\setminus\Sigma)^N : \Bf|_{D_+}\in E_q(D_+) \text{ and } \Bf|_{D_-} \in E_q(D_-)\}, \\
\|\Bf\|_{E_q(D\setminus\Sigma)}&=\|\Bf|_{D_+}\|_{E_q(D_+)}+\|\Bf|_{D_-}\|_{E_q(D_-)},
\end{align*}
where $\Bf|_{D_\pm}$ stands for the restriction of $\Bf$ on $D_\pm$, respectively.
On the other hand,
\begin{align*}
F_q(D\setminus\Sigma)
&=\{\Bf\in E_q(D\setminus\Sigma):
\Bn\cdot\Bf|_{D_+}\in H_q^1(D_+) \text{ and } \Bn\cdot\Bf|_{D_-}\in H_q^1(D_-)\},  \\
\|\Bf\|_{F_q(D\setminus\Sigma)}
&=\|\Bf\|_{E_q(D\setminus\Sigma)}+\|\Bn\cdot\Bf|_{D_+}\|_{H_q^1(D_+)}+\|\Bn\cdot\Bf|_{D_-}\|_{H_q^1(D_-)}.
\end{align*}

\begin{rema}\label{rema:Fq}
One sees that $\Bf\in F_q(D\setminus\Sigma)$ is equivalent to $\Bf=\Bf_+\mathds{1}_{D_+}+\Bf_-\mathds{1}_{D_-}$
for some $\Bf_\pm\in E_q(D_\pm)$ with $\Bn\cdot\Bf_\pm\in H_q^1(\Omega_\pm)$.
\end{rema}

The aim of this section is to prove Theorems \ref{theo:main} and \ref{theo:main2}.
To this end, it suffices to prove the existence of solutions of \eqref{exteq:1}
satisfying \eqref{theo:main-est1}-\eqref{theo:main-est2}.\footnote{
The uniqueness of solutions of \eqref{exteq:1}  
follows from the existence of solutions of \eqref{exteq:1} similarly to the proof of Theorem \ref{theo:2-bdd_2} \eqref{theo:2-bdd_2-2},
while Theorem \ref{theo:main2} follows from Theorem \ref{theo:main}
as Theorem \ref{w-theo:whole} has followed from Theorem \ref{theo:2-bdd_2}.}
Instead of \eqref{exteq:1}, we consider the following equivalent system of \eqref{exteq:1} for simplicity of notation:
\begin{equation}\label{final-existence}
\left\{\begin{aligned}
\Delta v&= \dv\Bf && \text{in $\BR^N\setminus\Sigma$,} \\
\jmp{\rho v}&=0 && \text{on $\Sigma$,} \\
\jmp{\Bn\cdot\nabla v}&=\jmp{\Bn\cdot\Bf} && \text{on $\Sigma$,}
\end{aligned}\right.
\end{equation}
where $\rho=\rho_+\mathds{1}_{\Omega_+}+\rho_-\mathds{1}_{\Omega_-}$ for positive constants $\rho_\pm$ and
$$
\jmp{f}(x_0)=\lim_{x\to x_0, x\in \Omega_+}f(x)-\lim_{x\to x_0,  x\in \Omega_-}f(x) \quad (x_0\in\Sigma).
$$
We here recall the assumption for $q\in(1,\infty)$.

\begin{assu}\label{assu:q}
Let $q\in(1,\infty)$ and $\max(q,q')\leq r$ for $q'=q/(q-1)$.
\end{assu}

As mentioned above, it suffeces to prove in this section

\begin{theo}\label{theo:final}
Suppose that Assumption $\ref{assu:q}$ holds.
Then, for any $\Bf\in F_q(\BR^N\setminus\Sigma)$,
the system \eqref{final-existence} admits a solution 
$v\in \wht H_q^1(\BR^N\setminus\Sigma)\cap \wht H_q^2(\BR^N\setminus\Sigma)$ satisfying
\begin{equation}\label{theo:final-est1}
\|\nabla^2 v\|_{L_q(\BR^N\setminus\Sigma)}\leq C\|\Bf\|_{F_q(\BR^N\setminus\Sigma)}, \quad
\|\nabla v\|_{L_q(\BR^N\setminus\Sigma)} \leq C\|\Bf\|_{L_q(\BR^N\setminus\Sigma)},
\end{equation}
with some positive constant $C=C(N,q,r,\rho_+,\rho_-)$.
Additionally, if $\Bn\cdot(\Bf_+-\Bf_-)=0$ on $\Sigma$, then $v$ satisfies
\begin{equation}\label{theo:final-est2}
\|\nabla^2 v\|_{L_q(\BR^N\setminus\Sigma)} \leq C\|\Bf\|_{E_q(\BR^N\setminus\Sigma)}
\end{equation}
for some positive constant $C=C(R,N,q,r,\rho_+,\rho_-)$.
\end{theo}

\subsection{Solution operators}
First, let us consider the following problem in the whole space: 
\begin{equation}\label{eq:2-ext}
\Delta V= \dv\BF \quad \text{in $\BR^N$.}
\end{equation}
Similarly to \cite[Page 1700]{Shibata2018}, we obtain

\begin{lemm}\label{lemm:1-ext}
Let $q\in(1,\infty)$. Then there exists a linear operator
$$
\CS_\infty:E_q(\BR^N)\to \wht H_q^1(\BR^N)\cap \wht H_q^2(\BR^N)
$$
such that $V=\CS_\infty\BF$ is a solution to \eqref{eq:2-ext}. In addition, 
\begin{gather*}
\|\CS_\infty\BF\|_{L_q(G)}+\|\nabla\CS_\infty\BF\|_{L_q(\BR^N)}
\leq C\|\BF\|_{L_q(\BR^N)}, \\
\|\nabla^2\CS_\infty\BF\|_{L_q(\BR^N)}
\leq C\|\dv\BF\|_{L_q(\BR^N)},
\end{gather*}
with some positive constant $C=C(R,N,q)$.
\end{lemm}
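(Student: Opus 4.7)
The plan is to construct $V=\CS_{\infty}\BF$ by the standard Fourier-multiplier representation
\[
V_{0}=-\CF_{\xi}^{-1}\!\left[\frac{i\xi\cdot\CF[\BF](\xi)}{|\xi|^{2}}\right],
\]
which formally inverts the Laplacian. Then $\Delta V_{0}=\dv\BF$ since $|\xi|^{2}$ cancels, and the derivatives are given by the Calder\'on--Zygmund-type multipliers
\[
\pd_{k}V_{0}=\CF_{\xi}^{-1}\!\left[\frac{\xi_{k}\xi\cdot\CF[\BF](\xi)}{|\xi|^{2}}\right],\qquad
\pd_{k}\pd_{l}V_{0}=\CF_{\xi}^{-1}\!\left[\frac{\xi_{k}\xi_{l}}{|\xi|^{2}}\CF[\dv\BF](\xi)\right].
\]
Because the symbols $\xi_{k}\xi_{j}/|\xi|^{2}$ satisfy the Mikhlin condition, the $L_{q}$-boundedness of Riesz transforms will give
\[
\|\nabla V_{0}\|_{L_{q}(\BR^{N})}\leq C\|\BF\|_{L_{q}(\BR^{N})},\qquad
\|\nabla^{2}V_{0}\|_{L_{q}(\BR^{N})}\leq C\|\dv\BF\|_{L_{q}(\BR^{N})}.
\]

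To make $V_{0}$ itself a well-defined distribution (not merely up to a polynomial), I would first carry out the construction for $\BF\in C_{0}^{\infty}(\BR^{N})^{N}$ through the heat-kernel regularization
\[
V_{0}^{\vps}(x)=-\CF_{\xi}^{-1}\!\left[e^{-\vps|\xi|^{2}}\frac{i\xi\cdot\CF[\BF](\xi)}{|\xi|^{2}}\right](x),
\]
mirroring the limiting scheme already performed in the proof of Lemma~\ref{lemm:whole-zero}. The multiplier bounds above together with the Cauchy property of $\nabla V_{0}^{\vps}$ and $\nabla^{2}V_{0}^{\vps}$ as $\vps\to 0$ then furnish a solution $V_{0}$ satisfying $\Delta V_{0}=\dv\BF$ in $\BR^{N}$ with $\nabla V_{0}\in L_{q}(\BR^{N})^{N}$ and $\nabla^{2}V_{0}\in L_{q}(\BR^{N})^{N^{2}}$; a specific representative is fixed, for instance, by requiring the mean of $V_{0}^{\vps}$ on $G=B_{4R}$ to be zero before passing to the limit.

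With $V_{0}$ in hand, I would define
\[
(\CS_{\infty}\BF)(x)=V_{0}(x)-\frac{1}{|G|}\int_{G}V_{0}(y)\intd y,
\]
so that $\CS_{\infty}\BF$ has mean zero on $G$. This is a solution to \eqref{eq:2-ext} (adding a constant does not affect $\Delta V$), and the Poincar\'e inequality on the bounded domain $G$ gives
\[
\|\CS_{\infty}\BF\|_{L_{q}(G)}\leq C(R)\,\|\nabla\CS_{\infty}\BF\|_{L_{q}(G)}=C(R)\,\|\nabla V_{0}\|_{L_{q}(G)}\leq C(R)\,\|\BF\|_{L_{q}(\BR^{N})},
\]
completing the required estimate. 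Linearity of $\CS_{\infty}$ is inherited from the linearity of the Fourier multiplier and the averaging normalization. Since $\nabla(\CS_{\infty}\BF)\in L_{q}(\BR^{N})$, a standard covering argument combined with Poincar\'e on concentric balls gives $\CS_{\infty}\BF\in L_{q,\lc}(\BR^{N})$, so $V\in\wht H_{q}^{1}(\BR^{N})\cap\wht H_{q}^{2}(\BR^{N})$.

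Finally, I would extend $\CS_{\infty}$ from $C_{0}^{\infty}(\BR^{N})^{N}$ to the entire space $E_{q}(\BR^{N})$ by the density of smooth compactly supported vector fields (in analogy with Lemma~\ref{lemm:approx1}), using the derived estimates to show that the extension is continuous and preserves the equation and the bounds. The only technical point I expect to require some care is exactly this last density/extension step: one must verify that $C_{0}^{\infty}(\BR^{N})^{N}$ is dense in $E_{q}(\BR^{N})$ in the graph norm so that both $\|\BF\|_{L_{q}}$ and $\|\dv\BF\|_{L_{q}}$ can be approximated simultaneously, thereby justifying the passage to the limit in the estimates for $\nabla V$ and $\nabla^{2}V$ and in the equation $\Delta V=\dv\BF$ in the distributional sense. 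This is standard but is the only nontrivial ingredient; the rest of the argument is a bookkeeping assembly of the multiplier bounds, the normalization, and Poincar\'e's inequality.
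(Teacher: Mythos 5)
Your proposal is correct and takes essentially the same route the paper intends: the paper cites Shibata (2018) for this lemma, and the construction there (and the analogous one in the proof of Lemma~\ref{lemm:whole-zero}) is exactly the Fourier-multiplier inversion of the Laplacian with heat-kernel regularization, Riesz-transform bounds for $\nabla V$ and $\nabla^2 V$, and a normalization plus Poincar\'e on $G=B_{4R}$ to control $\|\CS_\infty\BF\|_{L_q(G)}$. The only point worth a sentence of extra care in a write-up is the one you flag yourself: one first does everything for $\BF\in C_0^\infty(\BR^N)^N$ and then uses the density of $C_0^\infty(\BR^N)^N$ in $E_q(\BR^N)$ in the graph norm (mollification, as in \cite[Theorem III.2.1]{Galdi11}) to pass to general $\BF$; your sketch covers this correctly.
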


We next consider the following problem in a bounded domain:
\begin{equation}\label{eq:3-ext}
\left\{\begin{aligned}
\Delta V &= \dv\BF && \text{in $G\setminus\Sigma$,} \\
\jmp{\rho V}&=0 && \text{on $\Sigma$,} \\
\jmp{\Bn\cdot \nabla V}&= \jmp{\Bn\cdot \BF} && \text{on $\Sigma$,} \\
V&=0 && \text{on $\Gamma$.}
\end{aligned}\right.
\end{equation}
Then, by Theorem \ref{theo:bdd}, we have

\begin{lemm}\label{lemm:2-ext}
Suppose that Assumption $\ref{assu:q}$ holds.
Then there is a linear operator
\begin{equation*}
\CS_0: F_q(G\setminus\Sigma)  \to  H_q^2(G\setminus\Sigma)
\end{equation*}
such that, for any $\BF\in F_q(G\setminus\Sigma)$,
$V=\CS_0\BF$ is a solution to \eqref{eq:3-ext}. In addition, 
\begin{equation*}
\|\nabla^2 \CS_0\BF\|_{L_q(G\setminus\Sigma)} 
\leq C \|\BF\|_{F_q(G\setminus\Sigma)} , \quad
\|\CS_0\BF\|_{H_q^1(G\setminus\Sigma)}
\leq C\|\BF\|_{L_q(G\setminus\Sigma)},
\end{equation*}
with some positive constant $C=C(N,q,r,\rho_+,\rho_-)$.
Additionally, if $\jmp{\Bn\cdot\BF}=0$ on $\Sigma$, then it holds that 
\begin{equation*}
\|\nabla^2\CS_0\BF\|_{L_q(G\setminus\Sigma)}
\leq C\|\BF\|_{E_q(G\setminus\Sigma)}
\end{equation*}
for some positive constant $C=C(N,q,r,\rho_+,\rho_-)$.
\end{lemm}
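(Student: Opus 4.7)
The plan is to reduce Lemma \ref{lemm:2-ext} directly to Theorem \ref{theo:bdd} by unpacking the jump notation and the function space $F_q(G\setminus\Sigma)$. First, given $\BF\in F_q(G\setminus\Sigma)$, I would invoke Remark \ref{rema:Fq} to write $\BF=\BF_+\mathds{1}_{G_+}+\BF_-\mathds{1}_{G_-}$ with $\BF_\pm\in E_q(G_\pm)$ and $\Bn\cdot\BF_\pm\in H_q^1(G_\pm)$, and crucially $\|\BF\|_{F_q(G\setminus\Sigma)}$ is equivalent to $\sum_{\Fs\in\{+,-\}}(\|\BF_\Fs\|_{E_q(G_\Fs)}+\|\Bn\cdot\BF_\Fs\|_{H_q^1(G_\Fs)})$.

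Next, I would set $V_\pm=V|_{G_\pm}$ and rewrite the transmission conditions in \eqref{eq:3-ext}: since $\jmp{\rho V}=\rho_+V_+-\rho_-V_-$ and $\jmp{\Bn\cdot\nabla V}=\Bn\cdot\nabla V_+-\Bn\cdot\nabla V_-$, the system becomes exactly \eqref{s-eq:1-bdd} with data $\BF_\pm$. Since Assumption \ref{assu:q} matches the hypotheses of Theorem \ref{theo:bdd} (noting that $G_\pm$ and $\Gamma$ satisfy Assumption \ref{assu:bdd} by our choice at the start of Section \ref{sec:exterior}), Theorem \ref{theo:bdd} provides solutions $V_\pm\in H_q^2(G_\pm)$ satisfying \eqref{0922:1_2019}--\eqref{0922:3_2019}. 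I would then define
\begin{equation*}
\CS_0\BF=V_+\mathds{1}_{G_+}+V_-\mathds{1}_{G_-},
\end{equation*}
and read off the claimed estimates from \eqref{0922:1_2019} and \eqref{0922:2_2019} (and from \eqref{0922:3_2019} in the case $\jmp{\Bn\cdot\BF}=0$).

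Linearity of $\CS_0$ follows from the uniqueness part of Theorem \ref{theo:bdd}: if $\BF=\alpha\BF^{(1)}+\beta\BF^{(2)}$ with the corresponding decompositions, then both $\CS_0\BF$ and $\alpha\CS_0\BF^{(1)}+\beta\CS_0\BF^{(2)}$ solve \eqref{eq:3-ext} with the same data, so their difference satisfies the homogeneous version of \eqref{s-eq:1-bdd}, which by the uniqueness assertion of Theorem \ref{theo:bdd} (2) is zero on both $G_+$ and $G_-$.

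There is no genuine obstacle here: the lemma is essentially a repackaging of Theorem \ref{theo:bdd} into operator form, so the only care needed is in verifying (i) the equivalence of the two formulations of the transmission conditions and (ii) that the aggregated norms on $G\setminus\Sigma$ match the sum of norms on $G_\pm$ that appear on the right-hand sides in Theorem \ref{theo:bdd}. Both points are immediate from the definitions of $E_q(G\setminus\Sigma)$, $F_q(G\setminus\Sigma)$, and the jump bracket.
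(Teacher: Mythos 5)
Your proof is correct and coincides with the paper's approach: the paper itself introduces Lemma~\ref{lemm:2-ext} with the phrase ``by Theorem \ref{theo:bdd}, we have,'' indicating exactly the reduction you spell out, namely unpacking the jump notation to match \eqref{s-eq:1-bdd}, invoking Theorem~\ref{theo:bdd} for existence and estimates, and using its uniqueness part to get linearity of $\CS_0$. The only difference is that you make explicit the minor bookkeeping steps (Remark~\ref{rema:Fq}, norm equivalences on $G\setminus\Sigma$ versus $G_\pm$) that the paper leaves implicit.
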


Let us define an operator $\CS: F_q(\BR^N\setminus\Sigma)\to \wht H_q^1(\BR^N\setminus\Sigma)\cap \wht H_q^2(\BR^N\setminus\Sigma)$
by
\begin{equation}\label{defi:operatorS}
\CS\Bf=\psi_\infty\CS_\infty(\vph_\infty\Bf)+\psi_0\CS_0(\vph_0\Bf) \quad (\Bf\in F_q(\BR^N\setminus\Sigma)).
\end{equation}
Note that $\vph_\infty\Bf\in E_q(\BR^N)$ and $\vph_0\Bf\in F_q(G\setminus\Sigma)$
when $\Bf\in F_q(\BR^N\setminus\Sigma)$.
By Lemmas \ref{lemm:1-ext} and \ref{lemm:2-ext}, the above $\CS \Bf$ satisfies the following estimates:
\begin{equation}\label{estimate:S}
\|\nabla^2\CS\Bf\|_{L_q(\BR^N\setminus\Sigma)}\leq C\|\Bf\|_{F_q(\BR^N\setminus\Sigma)}, \quad
\|\nabla\CS\Bf\|_{L_q(\BR^N\setminus\Sigma)}\leq C\|\Bf\|_{L_q(\BR^N\setminus\Sigma)}.
\end{equation}
Additionally, if $\Bn\cdot(\Bf_+-\Bf_-)=0$ on $\Sigma$, then it holds that
\begin{equation}\label{estimate:S-2}
\|\nabla^2\CS\Bf\|_{L_q(\BR^N\setminus\Sigma)}\leq C\|\Bf\|_{E_q(\BR^N\setminus\Sigma)}.
\end{equation}

At this point, we introduce  a function space $\CL_q(\BR^N)$ for $q\in(1,\infty)$.
Let 
$R_1=(2-2/3)R$ and $R_2=(3+2/3)R$, and then
$$
\CL_q(\BR^N)=\{f\in L_q(\BR^N) : \spp f\subset D_{R_1,R_2}, (f,1)_{\BR^N}=0\}
$$
and $\|f\|_{\CL_q(\BR^N)}=\|f\|_{L_q(\BR^N)}$, 
where
$
D_{R_1,R_2}=\{x\in\BR^N : R_1\leq |x|\leq R_2\}.
$

Now we consider for $f\in\CL_q(\BR^N)$ the following problem in the whole space:
\begin{equation}\label{eq:4-ext}
\Delta v= f \quad \text{in $\BR^N$.}
\end{equation}
For this problem, we have
\begin{lemm}\label{lemm:3-ext}
Let $q\in(1,\infty)$. Then there exists a linear operator
$$
\wtd \CT_\infty: \CL_q(\BR^N)\to \wht H_q^1(\BR^N)\cap\wht H_q^2(\BR^N)
$$
such that, for any $f\in\CL_q(\BR^N)$, $v=\wtd\CT_\infty f$ is a solution to \eqref{eq:4-ext}.
In addition,
\begin{align*}
\|\wtd\CT_\infty f\|_{L_q(G)} + \sup_{|x|\geq 4R}|x|^{N-1}|[\wtd \CT_\infty f](x)|  &\leq C\|f\|_{\CL_q(\BR^N)}, \\
\|\nabla \wtd\CT_\infty f\|_{L_q(\BR^N)} +\sup_{|x|\geq 4R}|x|^N|\nabla [\wtd\CT_\infty f](x)|&\leq C\|f\|_{\CL_q(\BR^N)}, \\
\|\nabla^2 \wtd\CT_\infty f\|_{L_q(\BR^N)} & \leq C\|f\|_{\CL_q(\BR^N)},
\end{align*}
with some positive constant $C=C(R,N,q)$.
\end{lemm}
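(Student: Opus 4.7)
The plan is to construct $\wtd\CT_\infty$ as the Newtonian potential operator, setting
\begin{equation*}
[\wtd\CT_\infty f](x) = \int_{\BR^N} E(x-y)f(y)\intd y,
\end{equation*}
where $E$ denotes the fundamental solution of the Laplacian on $\BR^N$, namely $E(x)=-[(N-2)\omega_{N-1}]^{-1}|x|^{2-N}$ when $N\geq 3$ and $E(x)=(2\pi)^{-1}\log|x|$ when $N=2$. Since $\spp f\subset D_{R_1,R_2}$ is compact, the integral is well-defined, $\Delta(\wtd\CT_\infty f)=f$ holds in the distributional sense, and the map $f\mapsto\wtd\CT_\infty f$ is linear. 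The global estimate $\|\nabla^2 \wtd\CT_\infty f\|_{L_q(\BR^N)} \leq C\|f\|_{L_q(\BR^N)}$ is the classical Calder\'on-Zygmund bound, since each $\pd_j\pd_k \wtd\CT_\infty f$ is a singular integral operator of convolution type acting on $f$.

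Next, I would extract the sharpened pointwise decay for $|x|\geq 4R$. Since $\spp f\subset D_{R_1,R_2}$ with $R_2=(3+2/3)R<4R\leq|x|$, one has $|x-y|\geq|x|/4$ uniformly in $y\in\spp f$, so the Taylor expansions
\begin{equation*}
E(x-y)=E(x)-y\cdot\nabla E(x)+O(|x|^{-N}|y|^2), \quad
\nabla_x E(x-y)=\nabla E(x)-\nabla^2 E(x)y+O(|x|^{-N-1}|y|^2)
\end{equation*}
are valid. Integrating against $f$ and invoking $(f,1)_{\BR^N}=0$ cancels the leading terms $E(x)\int f\intd y$ and $\nabla E(x)\int f\intd y$, yielding
\begin{equation*}
[\wtd\CT_\infty f](x) = -\nabla E(x)\cdot\int_{\BR^N}y f(y)\intd y + O(|x|^{-N}\|f\|_{L_1}),
\end{equation*}
and similarly $\nabla[\wtd\CT_\infty f](x) = -\nabla^2 E(x)\int y f\intd y + O(|x|^{-N-1})$. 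Using $|\nabla E(x)|\leq C|x|^{1-N}$ and $|\nabla^2 E(x)|\leq C|x|^{-N}$, together with $\|f\|_{L_1}\leq C\|f\|_{\CL_q(\BR^N)}$ by H\"older on the bounded set $D_{R_1,R_2}$, the two pointwise bounds $\sup_{|x|\geq 4R}|x|^{N-1}|[\wtd\CT_\infty f](x)|\leq C\|f\|_{\CL_q}$ and $\sup_{|x|\geq 4R}|x|^{N}|\nabla[\wtd\CT_\infty f](x)|\leq C\|f\|_{\CL_q}$ follow.

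To upgrade these into the stated $L_q$ bounds, I would estimate separately inside and outside $G=B_{4R}$. On the exterior region the decay $|\nabla[\wtd\CT_\infty f](x)|\leq C|x|^{-N}\|f\|_{\CL_q}$ is $L_q$-integrable precisely because $\int_{|x|\geq 4R}|x|^{-Nq}\intd x<\infty\iff q>1$, which holds. Inside $G$, both $\wtd\CT_\infty f$ and $\nabla\wtd\CT_\infty f$ may be written as convolutions against $E$ or $\nabla E$ truncated to a ball of radius $\leq 4R+R_2$; since $E\in L^1_{\lc}$ and $\nabla E\in L^1_{\lc}$ for all $N\geq 2$, Young's convolution inequality supplies $\|\wtd\CT_\infty f\|_{L_q(G)}+\|\nabla\wtd\CT_\infty f\|_{L_q(G)}\leq C\|f\|_{L_q}$. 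Combining interior and exterior gives all the claimed estimates.

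The main subtlety is the case $N=2$, where $E$ is logarithmic and $\wtd\CT_\infty f$ would grow like $\log|x|$ at infinity without the zero-mean hypothesis; the cancellation $\int f\intd y=0$ is precisely the mechanism that converts this would-be growth into the decay $|\wtd\CT_\infty f(x)|=O(|x|^{-1})$, so the construction genuinely depends on restricting to the subspace $\CL_q(\BR^N)$. Once this cancellation is exploited, the rest is standard potential theory.
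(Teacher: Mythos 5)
Your proposal is correct and is essentially the standard Newtonian-potential construction that the paper invokes by citing \cite[pp.~1703--1704]{Shibata2018} and \cite[Lemma 1]{Szufla1991} rather than spelling out the details. The structure of your argument is the same as in those references: represent $\wtd\CT_\infty f = E*f$, get the second-order $L_q$ bound from Calder\'on--Zygmund, use the moment condition $(f,1)_{\BR^N}=0$ to cancel the leading-order term in the Taylor expansion of $E(x-y)$ (and of $\nabla E(x-y)$) about $y=0$ to obtain the pointwise decay rates $|x|^{1-N}$ and $|x|^{-N}$ for $|x|\geq 4R$, and then convert these to $L_q$ bounds on the exterior while using Young's inequality with a truncated weakly singular kernel on $G$; you also correctly flag that the zero-mean condition is what rescues the $N=2$ case, where $E$ grows logarithmically. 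The only nit is the claim $|x-y|\geq|x|/4$ for $|x|\geq 4R$ and $y\in D_{R_1,R_2}$: with $R_2=(11/3)R$ the correct uniform lower bound is $|x-y|\geq|x|-|y|\geq|x|-(11/12)|x|=|x|/12$, not $|x|/4$; this is immaterial since any lower bound $|x-y|\gtrsim|x|$ suffices for the Taylor-remainder estimates.
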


\begin{proof}
See e.g. \cite[pp.1703--1704]{Shibata2018} and \cite[Lemma 1]{Szufla1991}.
\end{proof}

Next, we consider the following problem in a bounded domain:
\begin{equation}\label{eq:5-ext}
\left\{\begin{aligned}
\Delta v &= f && \text{in $G\setminus\Sigma$,} \\
\jmp{\rho v}&=0 && \text{on $\Sigma$}, \\
\jmp{\Bn\cdot\nabla v}&=0 && \text{on $\Sigma$,} \\
v&=0 && \text{on $\Gamma$.}
\end{aligned}\right.
\end{equation}
By Theorem \ref{theo:3-bdd}, we have

\begin{lemm}\label{lemm:4-ext}
Suppose that Assumption $\ref{assu:q}$ holds.
Then there is a linear operator
$$
\CT_0 : L_q(G\setminus\Sigma)\to H_q^2(G\setminus\Sigma)
$$
such that, for any $f\in L_q(G\setminus \Sigma)$,
$v=\CT_0 f$ is a solution to \eqref{eq:5-ext}.
In addition, 
\begin{align*}
\|\CT_0 f\|_{H_q^2(G\setminus\Sigma)}\leq C\|f\|_{L_q(G\setminus\Sigma)}
\end{align*}
for some positive constant $C=C(N,q,r,\rho_+,\rho_-)$.
\end{lemm}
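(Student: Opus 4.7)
The plan is to derive Lemma \ref{lemm:4-ext} as an essentially immediate corollary of Theorem \ref{theo:3-bdd}. Given $f\in L_q(G\setminus\Sigma)$, I split it via its restrictions $f_\pm = f|_{G_\pm} \in L_q(G_\pm)$ (noting that $G\setminus\Sigma = G_+\cup G_-$ is a disjoint union modulo the interface). Setting $d_\pm = -f_\pm$ accounts for the sign discrepancy between \eqref{eq:5-ext} (where $\Delta v = f$) and \eqref{eq:5-bdd} (where $-\Delta v_\pm = d_\pm$). With these data, Theorem \ref{theo:3-bdd} produces unique solutions $v_\pm \in H_q^2(G_\pm)$ satisfying the combined transmission condition $\rho_+ v_+ = \rho_- v_-$ on $\Sigma$, $\Bn\cdot\nabla(v_+-v_-)=0$ on $\Sigma$, and $v_-=0$ on $\Gamma$, which are precisely the boundary conditions in \eqref{eq:5-ext}.

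I then define the operator $\CT_0 f$ by gluing: set $(\CT_0 f)|_{G_\pm} = v_\pm$. This yields a well-defined element of $H_q^2(G\setminus\Sigma)$, and by construction it solves \eqref{eq:5-ext}. Linearity of $\CT_0$ follows at once from the uniqueness clause of Theorem \ref{theo:3-bdd}: given $f^{(1)}, f^{(2)}$ and scalars $\alpha,\beta$, both $\alpha\CT_0 f^{(1)}+\beta\CT_0 f^{(2)}$ and $\CT_0(\alpha f^{(1)}+\beta f^{(2)})$ solve the same system with data $\alpha f^{(1)}_\pm + \beta f^{(2)}_\pm$ on each side, so they coincide.

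Finally, for the estimate, the bound in Theorem \ref{theo:3-bdd} gives
\begin{equation*}
\sum_{\Fs\in\{+,-\}}\|v_\Fs\|_{H_q^2(G_\Fs)} \leq C \sum_{\Fs\in\{+,-\}}\|d_\Fs\|_{L_q(G_\Fs)} = C \sum_{\Fs\in\{+,-\}}\|f_\Fs\|_{L_q(G_\Fs)},
\end{equation*}
and the left-hand side is (up to an equivalent-norm constant) exactly $\|\CT_0 f\|_{H_q^2(G\setminus\Sigma)}$, while the right-hand side equals $C\|f\|_{L_q(G\setminus\Sigma)}$. The constant depends only on $N$, $q$, $r$, $\rho_+$, $\rho_-$, inherited from Theorem \ref{theo:3-bdd}.

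There is essentially no obstacle here; the lemma is a bookkeeping reformulation of Theorem \ref{theo:3-bdd} that recasts the two-sided data $(d_+,d_-)$ as a single function $f$ on $G\setminus\Sigma$ and the two-sided solution $(v_+,v_-)$ as a single function in $H_q^2(G\setminus\Sigma)$. This is why the authors remark only that it follows from Theorem \ref{theo:3-bdd}.
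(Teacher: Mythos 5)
Your proposal is correct and matches the paper's intended approach: the paper simply invokes Theorem \ref{theo:3-bdd} without further comment, and your derivation (restricting $f$ to $G_\pm$, flipping the sign to match the $-\Delta v_\pm = d_\pm$ convention, gluing the resulting $v_\pm$ into a single $H_q^2(G\setminus\Sigma)$ function, and reading off linearity from the uniqueness clause) is exactly the bookkeeping reformulation that makes this corollary precise.
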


Finally, we introduce $\CT:\CL_q(\BR^N)\to \wht H_q^1(\BR^N\setminus\Sigma)\cap\wht H_q^2(\BR^N\setminus\Sigma) $ as follows:
For $f\in\CL_q(\BR^N)$, we choose a constant $c_f$ so that
\begin{equation}\label{condi:average0}
\int_{D_{R_3,R_4}}(\wtd\CT_\infty f+c_f)\intd x=0,
\end{equation}
where $D_{R_3,R_4}=\{x\in\BR^N : R_3\leq |x| \leq R_4\}$ for $R_3= (3+1/3)R$ and $R_4=(4-1/3)R$.
Let us define
\begin{equation}\label{defi:operatorT}
\CT_\infty f=\wtd \CT_\infty f + c_f, \quad \CT f=\vph_\infty\CT_\infty f+\vph_0\CT_0 f.
\end{equation}
We then have by \eqref{condi:average0}
\begin{equation}\label{condi:average}
\int_{D_{R_3,R_4}}\CT f \intd x=\int_{D_{R_3,R_4}}\CT_\infty f\intd x=0.
\end{equation}
In addition, since $c_f=-|D_{R_3,R_4}|^{-1}\int_{D_{R_3,R_4}}\wtd \CT_\infty f\intd x$,
it holds by Lemma \ref{lemm:3-ext} that
\begin{equation*}
|c_f|\leq \frac{1}{|D_{R_3,R_4}|}\int_{D_{R_3,R_4}}|\wtd\CT_\infty f|\intd x
\leq C\|\wtd\CT_\infty f\|_{L_q(G)} \leq C\|f\|_{\CL_q(\BR^N)}.
\end{equation*}
Thus $\CT f$ satisfies, together with Lemmas \ref{lemm:3-ext} and \ref{lemm:4-ext},
the following estimate:
\begin{equation}\label{est:T}
\|\nabla\CT f\|_{L_q(\BR^N\setminus\Sigma)}+\|\nabla^2\CT f\|_{L_q(\BR^N\setminus\Sigma)}
\leq C\|f\|_{\CL_q(\BR^N)}.
\end{equation}

\subsection{An auxiliary problem}\label{subsec:5-2}
In this subsection, we consider for  $f\in\CL_q(\BR^N)$
the following auxiliary problem:
\begin{equation}\label{eq:1-auxiext}
\left\{\begin{aligned}
\Delta v &= f && \text{in $\BR^N\setminus\Sigma$,} \\
\jmp{\rho v} &=0 && \text{on $\Sigma$,} \\
\jmp{\Bn\cdot\nabla v}&=0 && \text{on $\Sigma$.}
\end{aligned}\right.
\end{equation}
Concerning this system, we prove

\begin{lemm}\label{lemm:1-auxiext}
Suppose that Assumption $\ref{assu:q}$ holds.
Let $q\in[2,\infty)$ additionally and $f\in\CL_q(\BR^N)$.
Then \eqref{eq:1-auxiext} admits a solution 
$v\in\wht H_q^1(\BR^N\setminus\Sigma)\cap \wht H_q^2(\BR^N\setminus\Sigma)$ satisfying 
\begin{equation}\label{eq1:1112_2019}
\|\nabla v\|_{L_q(\BR^N\setminus\Sigma)} + \|\nabla^2 v\|_{L_q(\BR^N\setminus\Sigma)} 
\leq C \|f\|_{\CL_q(\BR^N)}
\end{equation}
for some positive constant $C=C(R,N,q,r,\rho_+,\rho_-)$.
\end{lemm}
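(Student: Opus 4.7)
The plan is to use the operator $\CT$ from \eqref{defi:operatorT} as a parametrix and reduce \eqref{eq:1-auxiext} to a Fredholm equation on $\CL_q(\BR^N)$. First I would compute the error: writing $\CT f = \vph_\infty \CT_\infty f + \vph_0 \CT_0 f$ and using $\vph_\infty + \vph_0 = 1$, $\Delta \CT_\infty f = f$ on $\BR^N$ (from Lemma \ref{lemm:3-ext}), and $\Delta \CT_0 f = f$ on $G \setminus \Sigma$ (from Lemma \ref{lemm:4-ext}), one obtains $\Delta \CT f = f + \CU f$ with
\[
\CU f := 2\,\nabla \vph_0 \cdot \nabla(\CT_0 f - \CT_\infty f) + (\Delta \vph_0)(\CT_0 f - \CT_\infty f),
\]
supported in $\spp \nabla \vph_0 \subset \{2R \le |x| \le 3R\} \subset D_{R_1,R_2}$. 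Moreover, $\vph_\infty \equiv 0$ on a neighborhood of $\Sigma \subset B_R$, so $\CT f = \CT_0 f$ there, and the conditions $\jmp{\rho \CT f} = 0$ and $\jmp{\Bn \cdot \nabla \CT f} = 0$ on $\Sigma$ are inherited from $\CT_0 f$.

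Next I would show that $\CU \colon \CL_q(\BR^N) \to \CL_q(\BR^N)$ is a compact operator. The support condition is immediate. The zero-mean condition follows from the divergence theorem on $B_M$ applied to $\Delta \CT f = f + \CU f$: the boundary integral $\int_{S_M} \Bn_M \cdot \nabla \CT f$ is $O(M^{-1})$ by the pointwise gradient decay in Lemma \ref{lemm:3-ext}, and $\int f = 0$ since $f \in \CL_q(\BR^N)$. For compactness, Lemmas \ref{lemm:3-ext} and \ref{lemm:4-ext} give uniform bounds $\|\CT_\infty f\|_{H_q^2(D_{2R,3R})} + \|\CT_0 f\|_{H_q^2(D_{2R,3R})} \le C\|f\|_{\CL_q(\BR^N)}$, so $\CU f \in H_q^1(D_{2R,3R})$ with the same bound, and Rellich--Kondrachov on the bounded annulus yields compactness into $L_q$.

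By Fredholm's alternative, $I + \CU$ is then Fredholm of index zero on $\CL_q(\BR^N)$, so the entire proof reduces to showing $\ker(I + \CU) = \{0\}$. Given $h$ in this kernel, $v := \CT h \in \wht H_q^1(\BR^N\setminus\Sigma) \cap \wht H_q^2(\BR^N\setminus\Sigma)$ is harmonic on $\BR^N \setminus \Sigma$ and satisfies $\jmp{\rho v} = \jmp{\Bn \cdot \nabla v} = 0$. Because $\spp h$ is compact and $q \ge 2$, we have $h \in \CL_2(\BR^N)$, so one may work in the Hilbert setting. By Lemma \ref{lemm:3-ext}, $v(x) = \CT_\infty h(x) \to c_h$ as $|x| \to \infty$ with $|v - c_h| = O(|x|^{-(N-1)})$ and $|\nabla v| = O(|x|^{-N})$. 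Setting $\tilde v_\pm := v_\pm - \rho_\pm^{-1} \rho_- c_h$ preserves the jump conditions and makes $\tilde v_- \to 0$ at infinity; multiplying $\Delta \tilde v_\pm = 0$ by $\rho_\pm \tilde v_\pm$, integrating over $\Omega_\pm \cap B_M$, and adding (the $\Sigma$-integrals cancel via the jump conditions $\rho_+\tilde v_+ = \rho_-\tilde v_-$ and $\Bn\cdot\nabla\tilde v_+ = \Bn\cdot\nabla\tilde v_-$; the $S_M$-integral is $O(M^{-N}) \to 0$) gives $\int \rho |\nabla \tilde v|^2 = 0$. Hence $v = \rho^{-1} c$ with $c = \rho_- c_h$.

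To eliminate the constant I would invoke the normalization \eqref{condi:average}: since $D_{R_3, R_4} \subset \Omega_-$ and $v = \rho_-^{-1} c$ there, $0 = \int_{D_{R_3,R_4}} v = \rho_-^{-1} c\, |D_{R_3,R_4}|$ forces $c = 0$, hence $v \equiv 0$. Then $\CT_0 h = 0$ on $\{|x| \le 2R\}$ and $\CT_\infty h = 0$ on $\{|x| \ge 3R\}$; applying $\Delta$ yields $\spp h \subset \{2R \le |x| \le 3R\}$, $c_h = 0$, and $\wtd\CT_\infty h \equiv 0$ on $\{|x| \ge 3R\}$. Since $\wtd\CT_\infty h$ coincides with the Newton potential of $h$ (by Liouville-type uniqueness in Lemma \ref{lemm:3-ext}), the Taylor/multipole expansion of $\wtd\CT_\infty h(x)$ at $|x| \to \infty$ forces every polynomial moment $\int y^\alpha h(y) \intd y$ to vanish; density of polynomials in $L_{q'}$ on the compact set $\spp h$ then gives $h = 0$. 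With injectivity secured, $h := (I+\CU)^{-1} f \in \CL_q(\BR^N)$ and $v := \CT h$ solve \eqref{eq:1-auxiext}, and the estimate \eqref{eq1:1112_2019} follows from \eqref{est:T} and the boundedness of $(I+\CU)^{-1}$. The main obstacle, as I see it, is precisely this injectivity step: the interplay of the $L_2$-energy identity, the mean-value normalization built into $\CT_\infty$, and the moment-vanishing argument is what allows the parametrix method to close up without circularly invoking the uniqueness part of Theorem \ref{theo:main}.
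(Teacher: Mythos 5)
Your overall architecture — use $\CT$ as a parametrix to write $\Delta\CT f = f + \CU f$, show $\CU$ is compact on $\CL_q(\BR^N)$, invoke Fredholm, and prove injectivity of $I+\CU$ via an energy identity on $\CT h$ for $h\in\ker(I+\CU)$ with $q\geq 2$ — matches the paper's strategy. Your $\CU$ is identical to the paper's $\CG$ (use $\vph_\infty=1-\vph_0$), the compactness argument via Rellich--Kondrachov is the same, and your energy identity with the shifted function $\tilde v_\pm$ is a tidy variant of the paper's cut-off $\omega_L$ argument. All of this is sound.

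The genuine gap is in your final step, the passage from $\CT h\equiv 0$ to $h=0$. You conclude $\spp h\subset D_{2R,3R}$, $c_h=0$, and $\wtd\CT_\infty h\equiv 0$ on $\{|x|\geq 3R\}$, and then assert that the vanishing of the multipole expansion of the Newton potential forces \emph{every} polynomial moment $\int y^\alpha h(y)\,dy$ to vanish, so that density of polynomials in $L_{q'}(\spp h)$ gives $h=0$. This is false: the multipole expansion of a Newton potential with compactly supported source $h$ is a sum of exterior spherical harmonics, and its vanishing on $\{|x|>3R\}$ only forces the \emph{harmonic} moments $\int H(y)h(y)\,dy$ to vanish for homogeneous harmonic polynomials $H$; it says nothing about $\int |y|^2\,h$, $\int y_1^2\,h$, etc. A concrete obstruction: take any $\phi\in C_0^\infty(D_{2R,3R})$, not identically zero, and set $h=\Delta\phi$. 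Then $h$ has compact support in $D_{R_1,R_2}$, $\int h=0$, and its Newton potential equals $\phi$, which vanishes for $|x|\geq 3R$; yet $h\neq 0$, and indeed $\int y^\alpha h = \int(\Delta y^\alpha)\phi$ is generically nonzero for $|\alpha|\geq 2$. So the premise "Newton potential vanishes near infinity" does not imply $h=0$, and your density argument collapses because you only ever get the harmonic moments.

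What closes the gap — and what the paper does in its Lemma~\ref{lemm:A2-2} — is to exploit the \emph{other} piece of information that $\CT h\equiv 0$ hands you, namely $\CT_0 h=0$ for $|x|\leq 2R$, and feed it into uniqueness of the Dirichlet problem on $G=B_{4R}$: extend $\CT_0 h$ by zero across $B_{(3/2)R}$ to a solution of $\Delta w=h$ in $G$ with $w=0$ on $\Gamma$; note $\CT_\infty h$ is also such a solution (since $\CT_\infty h=0$ for $|x|\geq 3R\supset\Gamma$); by uniqueness $\CT_0 h=\CT_\infty h$ on $G$, whence $\CT_\infty h=\CT h=0$ on $G$ and hence on all of $\BR^N$, giving $h=\Delta\CT_\infty h=0$. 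Your proof discards the interior information and tries to conclude from exterior decay alone, which is not enough.
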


\begin{proof}
Let $\CT$ be the operator defined as \eqref{defi:operatorT}.
Then, in $\BR^N\setminus\Sigma$,
\begin{align*}
\Delta\CT f
&=\vph_\infty\Delta\CT_\infty f+2\nabla\vph_\infty\cdot\nabla\CT_\infty f+(\Delta\vph_\infty)\CT_\infty f \\
&+\vph_0\Delta\CT_0 f+2\nabla\vph_0\cdot\nabla\CT_0 f+(\Delta\vph_0)\CT_0 f.
\end{align*}
Since
\begin{equation*}
\vph_\infty\Delta\CT_\infty f = \vph_\infty f, \quad \vph_0\Delta\CT_0 f =\vph_0 f, \quad 
\vph_0+\vph_\infty=1,
\end{equation*}
it holds that
\begin{equation*}
\Delta\CT f= f + \CG f \quad \text{in $\BR^N\setminus\Sigma$,}
\end{equation*}
where
\begin{equation}\label{defi:1-extgarb}
\CG f=2\nabla\vph_\infty\cdot\nabla\CT_\infty f+(\Delta\vph_\infty)\CT_\infty f
+2\nabla\vph_0\cdot\nabla\CT_0 f+(\Delta\vph_0)\CT_0 f.
\end{equation}
Thus we have achieved
\begin{equation}\label{eq:8:20191114}
\left\{\begin{aligned}
\Delta \CT f &= f+ \CG f && \text{in $\BR^N\setminus\Sigma$,} \\
\jmp{\rho\CT f}&=0 && \text{on $\Sigma$,} \\
\jmp{\Bn\cdot\nabla\CT f}&=0 && \text{on $\Sigma$.}
\end{aligned}\right.
\end{equation}
The following lemma is proved in the appendix A below.

\begin{lemm}\label{lemm:2-auxiext}
Suppose that Assumption $\ref{assu:q}$ holds. 
\begin{enumerate}[$(1)$]
\item\label{lemm:2-auxiext-1}
Then $\CG$ is a compact operator on $\CL_q(\BR^N)$.
\item\label{lemm:2-auxiext-2}
Let $q\in[2,\infty)$ additionally. 
Then $(I+\CG)^{-1}$ exists in $\CL(\CL_q(\BR^N))$. 
\end{enumerate}
\end{lemm}

Setting $v=\CT(I+\CG)^{-1}f$ for $f\in\CL_q(\BR^N)$,
we see that $v$ is a solution to \eqref{eq:1-auxiext} by \eqref{eq:8:20191114} 
and satisfies \eqref{eq1:1112_2019} by \eqref{est:T}.
This completes the proof of Lemma \ref{lemm:1-auxiext}
\end{proof}

\subsection{Proof of Theorem \ref{theo:final}}
Let $\CS$ be the operator defined as \eqref{defi:operatorS}.
Then we observe that in $\BR^N\setminus\Sigma$
\begin{align*}
\Delta \CS\Bf&= \psi_\infty\Delta\CS_\infty(\vph_\infty\Bf)+2\nabla\psi_\infty\cdot\nabla\CS_\infty(\vph_\infty\Bf)
+(\Delta\psi_\infty)\CS_\infty(\vph_\infty\Bf) \\
&+\psi_0\Delta\CS_0(\vph_0\Bf)+2\nabla\psi_0\cdot\nabla\CS_0(\vph_0\Bf)+(\Delta\psi_0)\CS_0(\vph_0\Bf).
\end{align*}
Note that in $\BR^N\setminus\Sigma$ 
\begin{equation*}
\psi_\infty\Delta\CS_\infty(\vph_\infty\Bf)+\psi_0\Delta\CS_0(\vph_0\Bf) 
=\psi_\infty\dv(\vph_\infty\Bf)+\psi_0\dv(\vph_0\Bf)=\dv\Bf.
\end{equation*}
This relation implies
$$
\Delta\CS\Bf=\dv\Bf+\CR\Bf \quad \text{in $\BR^N\setminus\Sigma$},
$$
where 
\begin{align}\label{defi:Rf}
\CR\Bf&=2\nabla\psi_\infty\cdot\nabla\CS_\infty(\vph_\infty\Bf) +(\Delta\psi_\infty)\CS_\infty(\vph_\infty\Bf) \\
&+2\nabla\psi_0\cdot\nabla\CS_0(\vph_0\Bf)+(\Delta\psi_0)\CS_0(\vph_0\Bf). \notag
\end{align}
On the other hand,
\begin{equation*}
\jmp{\rho\CS\Bf}=0, \quad \jmp{\Bn\cdot\nabla\CS\Bf}=\jmp{\Bn\cdot\Bf} \quad \text{on $\Sigma$,}
\end{equation*}
and also one has\footnote{The first property of \eqref{est1:R} is proved in the appendix B below,
while the second property of \eqref{est1:R} follows from Lemmas \ref{lemm:1-ext} and \ref{lemm:2-ext} immediately.}
for $\Bf\in F_q(\BR^N\setminus\Sigma)$
\begin{equation}\label{est1:R}
\CR\Bf\in\CL_q(\BR^N), \quad \|\CR\Bf\|_{\CL_q(\BR^N)}\leq C\|\Bf\|_{L_q(\BR^N\setminus\Sigma)}.
\end{equation}

Next, we consider
\begin{equation}\label{eq:6-ext}
\left\{\begin{aligned}
\Delta w &= -\CR\Bf && \text{in $\BR^N\setminus\Sigma$,} \\
\jmp{\rho w}&=0 && \text{on $\Sigma$}  \\
\jmp{\Bn\cdot\nabla w}&=0 && \text{on $\Sigma$.}
\end{aligned}\right.
\end{equation}
In the following, the discussion of $w$ is divided into two cases.

{\bf Case 1}: $q\in[2,\infty)$.
By Lemma \ref{lemm:1-auxiext} and \eqref{est1:R},
the system \eqref{eq:6-ext} admits a solution $w\in \wht H_q^1(\BR^N\setminus\Sigma)\cap\wht H_q^2(\BR^N\setminus\Sigma)$
satisfying 
\begin{equation}\label{est:correction1}
\|\nabla w\|_{L_q(\BR^N\setminus\Sigma)}+\|\nabla^2 w\|_{L_q(\BR^N\setminus\Sigma)}\leq C\|\Bf\|_{L_q(\BR^N\setminus\Sigma)}.
\end{equation}
Thus $v=\CS\Bf+w$ solves \eqref{final-existence} and satisfies \eqref{theo:final-est1}-\eqref{theo:final-est2}
by \eqref{estimate:S}, \eqref{estimate:S-2}, and \eqref{est:correction1}.
This completes the proof of Theorem \ref{theo:final} for $q\in[2,\infty)$.

{\bf Case 2}: $q\in(1,2)$.
Since Theorem \ref{theo:final} is already proved for $q\in[2,\infty)$ in Case 1 as above,
one can prove by the result of Case 1 the following lemma.

\begin{lemm}\label{lemm:6-ext}
Suppose that Assumption $\ref{assu:q}$ holds.
Let $q\in (1,2)$ additionally and $f\in\CL_q(\BR^N)$. 
Then \eqref{eq:1-auxiext} admits a solution $v\in \wht H_q^1(\BR^N\setminus\Sigma)
\cap \wht H_q^2(\BR^N\setminus\Sigma)$ satisfying
\begin{equation*}
\|\nabla v\|_{L_q(\BR^N\setminus\Sigma)} + \|\nabla^2 v\|_{L_q(\BR^N\setminus\Sigma)} 
\leq C\|f\|_{L_q(\BR^N)}
\end{equation*}
for some positive constant $C=C(R,N,q,r,\rho_+,\rho_-)$.
\end{lemm}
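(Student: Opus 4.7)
The plan is to carry through the same construction as in Lemma \ref{lemm:1-auxiext}, namely to set $v = \CT(I+\CG)^{-1}f$ for $\CT$ from \eqref{defi:operatorT} and $\CG$ from \eqref{defi:1-extgarb}. Then $v$ solves \eqref{eq:1-auxiext} by \eqref{eq:8:20191114}, and the required bound follows from \eqref{est:T} together with the operator norm of $(I+\CG)^{-1}$. The only issue specific to $q\in(1,2)$ is the existence of $(I+\CG)^{-1}$ in $\CL(\CL_q(\BR^N))$, since Lemma \ref{lemm:2-auxiext}\eqref{lemm:2-auxiext-2} supplies this inverse only for $q\in[2,\infty)$. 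Because $\CG$ remains compact on $\CL_q(\BR^N)$ by Lemma \ref{lemm:2-auxiext}\eqref{lemm:2-auxiext-1}, the Fredholm alternative reduces the problem to showing that $\ker(I+\CG)=\{0\}$ in $\CL_q(\BR^N)$.

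To establish triviality of the kernel, I would use a bootstrap (regularity) argument exploiting the smoothing nature of $\CG$. Suppose $f\in \CL_q(\BR^N)$ satisfies $f = -\CG f$. The coefficients $\nabla\vph_0,\Delta\vph_0,\nabla\vph_\infty,\Delta\vph_\infty$ are smooth and supported in the bounded annulus $\overline{B_{3R}\setminus B_{2R}}$, which is disjoint from $\Sigma$; so $\CG f$ is supported in that annulus. On this set $\CT_0 f\in H_q^2$ by Lemma \ref{lemm:4-ext} and $\CT_\infty f = \wtd\CT_\infty f + c_f \in H_q^2$ by Lemma \ref{lemm:3-ext}, so the definition \eqref{defi:1-extgarb} shows that $\CG f$ lies in $W_q^1$ of this annulus. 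The Sobolev embedding $W_q^1\hookrightarrow L_{q^\ast}$ with $q^\ast = Nq/(N-q)>q$ (valid because $q<2\leq N$) then gives $\CG f\in L_{q^\ast}(\BR^N)$; combined with the support and mean-zero conditions inherited from $\CG f\in \CL_q(\BR^N)$, we conclude $\CG f\in \CL_{q^\ast}(\BR^N)$, whence $f\in \CL_{q^\ast}(\BR^N)$. Iterating this step finitely many times produces an exponent $q_0\in[2,r]$ (shrinking via Hölder on the bounded support whenever the exponent overshoots $r$). Applying Lemma \ref{lemm:2-auxiext}\eqref{lemm:2-auxiext-2} at $q_0$ yields $f=0$, as required.

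The main obstacle I anticipate is the bookkeeping in the bootstrap: verifying that each application of $\CG$ genuinely gains one derivative of Sobolev integrability on the bounded annulus, and checking that the compact support and mean-zero conditions defining $\CL_{q^{(k)}}(\BR^N)$ persist through the iteration so that Lemma \ref{lemm:2-auxiext}\eqref{lemm:2-auxiext-1} can be reapplied at each step. Once this is established, termination is automatic because $q^{(k)}$ grows by the factor $N/(N-q^{(k-1)})>1$ and therefore exceeds $2$ after a finite number of iterations.
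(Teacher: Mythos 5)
Your bootstrap argument is correct, but it is a genuinely different route from the paper's own proof. The paper (Lemma~\ref{lemm:B1}) handles the kernel $\ker(I+\CG)$ for $q\in(1,2)$ by \emph{duality}: since Case~1 of the proof of Theorem~\ref{theo:final} already settles the existence result for the conjugate exponent $q'=q/(q-1)\in(2,\infty)$, the authors test $\CT f$ against a solution $w\in \wht H_{q'}^1(\BR^N\setminus\Sigma)\cap\wht H_{q'}^2(\BR^N\setminus\Sigma)$ of $\Delta w=\dv\Phi$ with the same transmission conditions, run the integration-by-parts argument of Subsection~\ref{subsec:3-3} to deduce $\rho\CT f$ is constant, kill the constant via the normalization \eqref{condi:average}, and invoke Lemma~\ref{lemm:A2-2}. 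You instead exploit the smoothing property $\CG\in\CL(\CL_q(\BR^N),H_q^1(\BR^N))$ from \eqref{eq:2-A} together with Sobolev embedding on the annular support $D_{R_1,R_2}$ to promote a kernel element $f=-\CG f$ into $\CL_{q_0}(\BR^N)$ for some $q_0\in[2,r]$, and then apply Lemma~\ref{lemm:2-auxiext}\eqref{lemm:2-auxiext-2} directly. Both routes ultimately rest on the $q\geq 2$ invertibility of $I+\CG$, so the logical dependency is comparable, but your argument is self-contained within the auxiliary-problem machinery, while the paper's is shorter and reuses the already-built existence theory at the dual exponent. One step you leave implicit and should state is the \emph{consistency} of the solution operators $\wtd\CT_\infty$, $\CT_0$ (hence of $\CG$) across $L_p$-scales: if $g\in\CL_q\cap\CL_p$, the $L_q$- and $L_p$-constructions of $\CT g$ agree, which follows from uniqueness in Theorem~\ref{theo:3-bdd} and the decay conditions of Lemma~\ref{lemm:3-ext} (the bounded-domain solution lies in the smaller $H_p^2\subset H_q^2$, and the whole-space Newtonian potential is unique within the given decay class). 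Without this, the identity $f=-\CG f$ is not automatically meaningful as an identity in $\CL_{q_0}$. The other bookkeeping points you flag — capping the iterate at $r$ (or simply at $2$) once it overshoots, using boundedness rather than $L_{q^*}$ when $q^{(k)}\geq N$, and preserving the support-and-mean-zero conditions at each step — are all benign and work out exactly as you expect.
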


\begin{proof}
See the appendix C below.
\end{proof}

By Lemma \ref{lemm:6-ext} and \eqref{est1:R},
the system \eqref{eq:6-ext} admits a solution $w\in \wht H_q^1(\BR^N\setminus\Sigma)
\cap \wht H_q^2(\BR^N\setminus\Sigma)$ satisfying \eqref{est:correction1}.
Thus $v=\CS\Bf+ w$ solves \eqref{final-existence} and satisfies 
\eqref{theo:final-est1}-\eqref{theo:final-est2}
by \eqref{estimate:S}, \eqref{estimate:S-2}, and \eqref{est:correction1}.
This completes the proof of Theorem \ref{theo:final} for $q\in(1,2)$,
which furnishes the proof of Theorem \ref{theo:final}.

\def\thesection{A}
\renewcommand{\theequation}{A.\arabic{equation}}
\section{}
This appendix proves Lemma \ref{lemm:2-auxiext} for $\CG$ defined as \eqref{defi:1-extgarb}.
We start with

\begin{lemm}\label{lemm:A1}
For $q$ satisfying Assumption $\ref{assu:q}$,
$\CG$ is a compact operator on $\CL_q(\BR^N)$.
\end{lemm}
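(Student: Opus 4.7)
The plan is to exhibit $\CG$ as a bounded map from $\CL_q(\BR^N)$ into $H_q^1$ of a fixed bounded set and then to invoke Rellich--Kondrachov. The geometric observation driving the argument is that $\vph$ is constant on $B_{2R}$ and on $\BR^N\setminus B_{3R}$, so that $\nabla\vph_0,\nabla\vph_\infty,\Delta\vph_0,\Delta\vph_\infty$ are all smooth with supports in the compact annulus $A:=\overline{B_{3R}\setminus B_{2R}}$. Because $\Omega_+\cup\Sigma\subset B_R\subset B_{2R}$, the set $A$ is disjoint from $\Sigma$ and compactly contained in $G\setminus\Sigma$, so in particular $\spp\CG f\subset A$ for every $f\in\CL_q(\BR^N)$ and the interface $\Sigma$ plays no role on $A$.

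The main estimate I would establish is $\|\CG f\|_{H_q^1(A)}\le C\|f\|_{\CL_q(\BR^N)}$. For the $\CT_\infty$ contributions, Lemma~\ref{lemm:3-ext} together with the bound $|c_f|\le C\|f\|_{\CL_q(\BR^N)}$ already noted after~\eqref{condi:average} gives $\CT_\infty f|_A\in H_q^2(A)$ with norm controlled by $\|f\|_{\CL_q(\BR^N)}$; for the $\CT_0$ contributions, Lemma~\ref{lemm:4-ext} applied on $G\setminus\Sigma\supset A$ gives $\CT_0 f|_A\in H_q^2(A)$ with the analogous bound. Multiplying against the smooth bounded coefficients $\nabla\vph_{0/\infty}$ and $\Delta\vph_{0/\infty}$ then preserves $H_q^1$-control, yielding the desired estimate on each of the four summands in $\CG f$.

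Compactness follows at once: given a bounded sequence $\{f_n\}\subset\CL_q(\BR^N)$, $\{\CG f_n\}$ is bounded in $H_q^1(A)$ and supported in the fixed bounded set $A$. Since $A$ is a smooth bounded domain, Rellich--Kondrachov makes $H_q^1(A)\hookrightarrow L_q(A)$ compact, so a subsequence converges in $L_q(A)$, hence in $L_q(\BR^N)$ after extension by zero. The limit has support in $D_{R_1,R_2}$ and, by H\"older's inequality on that fixed bounded support, inherits the vanishing integral constraint from the $\{\CG f_{n_k}\}$; thus it lies in $\CL_q(\BR^N)$ and $\CG$ is compact.

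The one preliminary that deserves care is checking $\CG f\in\CL_q(\BR^N)$, i.e.\ $\int_{\BR^N}\CG f\,dx=0$. I would get this by integrating~\eqref{eq:8:20191114} separately over $\Omega_+$ and $\Omega_-$ and applying the divergence theorem: the flux at infinity vanishes by the decay $|\nabla\wtd\CT_\infty f|\le C|x|^{-N}$ from Lemma~\ref{lemm:3-ext}, the fluxes across $\Sigma$ cancel by the transmission condition $\jmp{\Bn\cdot\nabla\CT f}=0$, and $\int f\,dx=0$ holds by the definition of $\CL_q(\BR^N)$. Beyond this bookkeeping, the argument is entirely routine; there is no substantive obstacle.
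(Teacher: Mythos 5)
Your proof is correct, and the compactness half coincides with the paper's argument: localize $\CG f$ to a fixed compact annulus, obtain a uniform $H_q^1$ bound there from Lemmas~\ref{lemm:3-ext} and~\ref{lemm:4-ext} together with the observation that $\nabla\vph_0,\nabla\vph_\infty,\Delta\vph_0,\Delta\vph_\infty$ are smooth with supports away from $\Sigma$, and then invoke Rellich--Kondrachov (the paper uses the slightly larger set $D_{R_1,R_2}$ in place of your $A=\overline{B_{3R}\setminus B_{2R}}$, a cosmetic difference). The genuine divergence is in how $(\CG f,1)_{\BR^N}=0$ is checked. You integrate the identity $\Delta\CT f=f+\CG f$ directly over $\Omega_+$ and $\Omega_-$, cancel the interface fluxes through $\jmp{\Bn\cdot\nabla\CT f}=0$, and dispose of the flux at infinity by the pointwise decay $|\nabla[\wtd\CT_\infty f](x)|\le C|x|^{-N}$ from Lemma~\ref{lemm:3-ext}. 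The paper instead never touches infinity: using $\nabla\vph_\infty=-\nabla\vph_0$, $\Delta\vph_\infty=-\Delta\vph_0$, and $\vph_0\Delta\CT_\infty f=\vph_0\Delta\CT_0 f=\vph_0 f$, it rewrites $\CG f$ as a divergence of compactly supported vector fields and applies Gauss's theorem over the bounded region $B_{4R}\setminus\Sigma$; the outer boundary terms on $S_{4R}$ drop because the cut-offs vanish there, and the only surviving interface contribution is $\int_\Sigma\jmp{\Bn\cdot\nabla\CT_0 f}\intd\sigma=0$. Your route is more direct, reading the claim off the PDE with no algebraic massaging, but it does rely on the decay estimate; the paper's bookkeeping stays in a bounded region and so avoids that dependence. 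Both are valid, and there is no gap in your argument.
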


\begin{proof}
{\bf Step 1}: $\CG f\in\CL_q(\BR^N)$ for any $f\in\CL_q(\BR^N)$.
It is clear that
\begin{equation}\label{eq:5_20191115}
\spp\CG f \subset D_{R_1,R_2}.
\end{equation}
In addition, by $\nabla\vph_\infty =-\nabla\vph_0$ and $\Delta\vph_\infty=-\Delta\vph_0$,
\begin{align}\label{eq:7_20191115}
\CG f
&=-2\nabla\vph_0\cdot\nabla\CT_\infty f-(\Delta\vph_0)\CT_\infty f
+2\nabla\vph_0\cdot\nabla\CT_0 f+(\Delta\vph_0)\CT_0 f
\\
&=\vph_0\Delta\CT_\infty f
-\dv((\nabla\vph_0)\CT_\infty f+\vph_0\nabla\CT_\infty f) \notag\\
&-\vph_0\Delta\CT_0 f+\dv ((\nabla\vph_0)\CT_0 f+\vph_0\nabla\CT_0 f) \quad \text{in $\BR^N\setminus\Sigma$},
\notag
\end{align}
which, combined with $\vph_0\Delta\CT_\infty f=\vph_0 f$ and $\vph_0\Delta\CT_0 f=\vph_0 f$ in $\BR^N\setminus\Sigma$,
furnishes
\begin{align*}
\CG f&=-\dv((\nabla\vph_0)\CT_\infty f+\vph_0\nabla\CT_\infty f) \\
&+\dv((\nabla\vph_0)\CT_0 f+\vph_0\nabla\CT_0 f) \quad \text{in $\BR^N\setminus\Sigma$}.
\end{align*}
It then holds that 
\begin{align*}
(\CG f,1)_{B_{4R}\setminus\Sigma}
&=(\dv((\nabla\vph_0)\CT_\infty f+\vph_0\nabla\CT_\infty f),1)_{B_{4R}\setminus\Sigma} \\
&+(\dv((\nabla\vph_0)\CT_0 f),1)_{B_{4R}\setminus\Sigma}
+(\dv(\vph_0\nabla\CT_0 f),1)_{B_{4R}\setminus\Sigma}  \\
&=:I_1+I_2+I_3.
\end{align*}
By Gauss's divergence theorem, we see that
\begin{align*}
I_1=I_2=0, \quad
I_3=\int_{\Sigma} \jmp{\Bn \cdot\nabla \CT_0 f} \intd \sigma  =0,
\end{align*}
where $d\sigma$ is the surface element of $\Sigma$.
Hence $(\CG f,1)_{\BR^N}=(\CG f,1)_{B_{4R}\setminus\Sigma}=0$,
which, combined with \eqref{eq:5_20191115}, implies $\CG f\in \CL_q(\BR^N)$.

{\bf Step 2:} $\CG$ is a compact operator on $\CL_q(\BR^N)$.

Let $\{f^{(j)}\}_{j=1}^\infty$ be a bounded sequence in $\CL_q(\BR^N)$, i.e.
there exists a positive constant $M$ such that $\|f^{(j)}\|_{\CL_q(\BR^N)}\leq M$ for any $j\in\BN$.
Note that by \eqref{defi:1-extgarb}, Lemmas \ref{lemm:3-ext} and \ref{lemm:4-ext},
and $\spp\nabla\vph_0,\spp\nabla\vph_\infty\subset\{x\in\BR^N:2R\leq |x|\leq 3R\}$
\begin{equation}\label{eq:2-A}
\CG\in\CL(\CL_q(\BR^N),H_q^1(\BR^N)).
\end{equation}
Since $H_q^1(\BR^N)$ is compactly embedded into $L_q(D_{R_1,R_2})$
by the Rellich-Kondrachov theorem (cf. \cite[Theorem 6.3]{AF03}),
$\CG$ can be regarded as a compact operator from $\CL_q(\BR^N)$ into $L_q(D_{R_1,R_2})$ by \eqref{eq:2-A}.
Thus there exists 
$G_f\in L_q(D_{R_1,R_2})$ such that
\begin{equation}\label{eq:4-A}
\lim_{j\to\infty}\|\CG f^{(j)}-G_f\|_{L_q(D_{R_1,R_2})}=0,
\end{equation}
up to some extraction.
Let us define
$$
\wtd G_f=\left\{\begin{aligned}
&G_f && \text{in $D_{R_1,R_2}$,} \\
&0 && \text{in $\BR^N\setminus D_{R_1,R_2}$,} 
\end{aligned}\right.
$$
and then $\spp\wtd G_f\subset D_{R_1,R_2}$. In addition, by \eqref{eq:4-A},
\begin{equation*}
(\wtd G_f, 1)_{\BR^N}=(G_f,1)_{D_{R_1,R_2}}=\lim_{j\to\infty}(\CG f^{(j)} ,1)_{D_{R_1,R_2}} 
\end{equation*}
which, combined with $\CG f^{(j)}\in\CL_q(\BR^N)$ as was proved in Step 1, implies $(\wtd G_f, 1)_{\BR^N}=0$.
Hence $\wtd G_f\in \CL_q(\BR^N)$.
On the other hand, by \eqref{eq:4-A} and $\CG f^{(j)}\in\CL_q(\BR^N)$,
\begin{equation*}
\lim_{j\to\infty}\|\CG f^{(j)}-\wtd G_f\|_{L_q(\BR^N)}=\lim_{j\to\infty}\|\CG f^{(j)} - G_f\|_{L_q(D_{R_1,R_2})}=0,
\end{equation*}
and therefore $\CG$ is a compact operator on $\CL_q(\BR^N)$.
This completes the proof. 
\end{proof}

Now we prove Lemma \ref{lemm:2-auxiext} \eqref{lemm:2-auxiext-2}.
In view of Lemma \ref{lemm:A1} and the Riesz-Schauder theory,
it suffices to prove that the kernel of $I+\CG$ is trivial in what follows. 
Let us begin with some property of $\CT$ defined in \eqref{defi:operatorT}.

\begin{lemm}\label{lemm:A2-2}
Suppose that Assumption $\ref{assu:q}$ holds, 
and let $f\in\CL_q(\BR^N)$ with $\CT f=0$.
Then $f=0$.  
\end{lemm}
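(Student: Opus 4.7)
My plan is to exploit the disjoint supports of the cut-offs $\vph_\infty,\vph_0$ to localise the vanishing of $u := \CT_\infty f$ and $w := \CT_0 f$, use this to push $\spp f$ away from $\Sigma$, collapse the transmission jumps of $w$ on $\Sigma$, and then invoke Dirichlet uniqueness on the bounded domain $G$. The hypothesis $\CT f = 0$ reads
\[
\vph_\infty u + \vph_0 w = 0 \quad \text{on } \BR^N\setminus\Sigma.
\]

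I first extract pointwise vanishing. On $B_{2R}$ one has $\vph_\infty\equiv 0$ and $\vph_0\equiv 1$, so $w\equiv 0$ there; on $\{|x|>3R\}$ one has $\vph_\infty\equiv 1$ and $\vph_0\equiv 0$, so $u\equiv 0$. The decay at infinity of $\wtd\CT_\infty f$ from Lemma \ref{lemm:3-ext} forces the additive constant $c_f = 0$, hence $u = \wtd\CT_\infty f$. Applying $\Delta u = f$ on $\BR^N$ and $\Delta w = f$ on $G\setminus\Sigma$ on those vanishing sets gives $f\equiv 0$ on $B_{2R}\cup\{|x|>3R\}$, so $\spp f \subset\{2R\leq |x|\leq 3R\}$. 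Since $\Omega_+\cup\Sigma\subset B_R\subset B_{2R}$, in particular $w\equiv 0$ on $\Omega_+$; the traces of $w$ and $\nabla w$ on $\Sigma$ from the $\Omega_+$-side therefore vanish, and the transmission conditions $\jmp{\rho w} = 0$ and $\jmp{\Bn\cdot\nabla w} = 0$ built into $\CT_0$ propagate this to the $\Omega_-$-side. In sum, $w = 0$ and $\nabla w = 0$ on $\Sigma$ from both sides.

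Now set $v := u - w$ on $G\setminus\Sigma$. Since $\Delta u = f = \Delta w$ there, $v$ is harmonic on $G\setminus\Sigma$. Because $u$ is locally $H_q^2$ on $\BR^N$ (so has no jumps across $\Sigma$) and $w,\nabla w$ vanish on $\Sigma$ by the previous paragraph, $v$ and $\nabla v$ are continuous across $\Sigma$; hence $v \in H_q^2(G)$ with $\Delta v = 0$ distributionally on $G$, and by elliptic regularity $v$ is smooth and harmonic on all of $G$. Both $u$ and $w$ vanish on $\Gamma$ (the former because $\Gamma\subset\{|x|=4R\}$, the latter by the Dirichlet condition built into $\CT_0$), so $v|_\Gamma = 0$, and Dirichlet uniqueness forces $v\equiv 0$, i.e.\ $u = w$ on $G\setminus\Sigma$. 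Substituting $w = u$ into $\vph_\infty u + \vph_0 w = 0$ yields $(\vph_\infty + \vph_0)u = u = 0$ on $G\setminus\Sigma$; combined with $u\equiv 0$ outside $B_{3R}\subset G$, this shows $u \equiv 0$ on $\BR^N$, and therefore $f = \Delta u = 0$.

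The delicate point is the propagation step: without $w$ and $\nabla w$ vanishing on $\Sigma$ from both sides, the transmission jumps of $w$ would survive in $v = u - w$ and obstruct its extension to a genuinely harmonic function on $G$, so the bounded-domain Dirichlet uniqueness would not be available. What makes this step go through is the specific design of the decomposition defining $\CT$, in which $\vph_0$ is identically one on a neighbourhood of $\Sigma$: this localises $w$ to zero on $B_{2R}\supset\Omega_+\cup\Sigma$ whenever $\CT f = 0$, and therefore kills both the value $w|_\Sigma$ and the entire gradient $\nabla w|_\Sigma$.
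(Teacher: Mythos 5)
Your proof is correct and takes essentially the same route as the paper: both extract that $\CT_\infty f$ vanishes for $|x|\ge 3R$ and $\CT_0 f$ vanishes on $B_{2R}$, compare the two on $G=B_{4R}$ via Dirichlet uniqueness, deduce $\CT_\infty f\equiv 0$, and conclude $f=\Delta\CT_\infty f=0$. The only substantive difference is cosmetic: you pass to $v=u-w$ and invoke Dirichlet uniqueness for harmonic functions on $G$, whereas the paper extends $\CT_0 f$ by zero across $B_{(3/2)R}$ and applies Dirichlet uniqueness directly for the Poisson problem $\Delta v=f$ in $G$, $v|_\Gamma=0$. A few remarks on your write-up. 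The observation $c_f=0$ is true but never used, and so is the further narrowing of $\spp f$. More importantly, the ``propagation'' of $\nabla w=0$ across $\Sigma$ via the transmission conditions is both slightly overstated (the condition $\jmp{\Bn\cdot\nabla w}=0$ controls only the normal component; the tangential components vanish only because the trace of $w$ vanishes) and, more to the point, entirely unnecessary: since $\Sigma\subset B_R$ and $w=\CT_0 f\equiv 0$ on the open set $B_{2R}$, which contains a full two-sided tubular neighborhood of $\Sigma$, $w$ and all its derivatives vanish there outright. This is exactly what the paper exploits when it glues $\CT_0 f$ with zero across $B_{(3/2)R}$; no appeal to the jump conditions is needed, and the gluing of $v$ into $H_q^2(G)$ then becomes trivial rather than a matter of matching traces.
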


\begin{proof}
By the assumption $\CT f=0$, we have 
\begin{equation}\label{eq:4_20191114}
\CT_\infty f=0 \quad \text{when $|x|\geq 3R$,} \quad 
\CT_0 f=0 \quad \text{when $|x|\leq 2R$.}
\end{equation}
Here we set
\begin{equation*}
w=
\left\{\begin{aligned}
&\CT_0 f && (x\in B_{4R}\setminus B_{(3/2)R}), \\
&0 && (x\in B_{(3/2)R}).
\end{aligned}\right.
\end{equation*}
Then $w$ satisfies by the second property of \eqref{eq:4_20191114} and $\CT_0f=0$ on $\Gamma$
\begin{equation}\label{eq:5-A}
\Delta w= f  \quad \text{in $G$,} \quad w=0 \quad \text{on $\Gamma$.}
\end{equation}
On the other hand, $\CT_\infty f$ also satisfies \eqref{eq:5-A} by the first property of \eqref{eq:4_20191114}.
The uniqueness of solutions of \eqref{eq:5-A} thus implies $\CT_0f=\CT_\infty f$ in $G$,
which, combined with the assumption $\CT f=0$ and $\vph_\infty=1-\vph_0$, furnishes
\begin{equation*}
0=\CT f=(1-\vph_0)\CT_\infty f+\vph_0\CT_0 f = \CT_\infty f \quad \text{in $G$.}
\end{equation*}
Combining this property with \eqref{eq:4_20191114} yields $\CT_\infty f=0$ in $\BR^N$. Since
$$
\Delta \CT_\infty f=f \quad \text{in $\BR^N$,}
$$
one concludes $f=0$. This completes the proof of the lemma.
\end{proof}

Next we complete the proof of Lemma \ref{lemm:2-auxiext} \eqref{lemm:2-auxiext-2}
by the following result.

\begin{lemm}\label{lemm:A2}
Suppose that Assumption $\ref{assu:q}$ holds.
Let $q\in[2,\infty)$ additionally and $f\in \CL_q(\BR^N)$ with $(I+\CG)f=0$.
Then $f=0$.
\end{lemm}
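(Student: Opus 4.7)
The plan is to show that $v := \CT f$ vanishes identically, after which Lemma \ref{lemm:A2-2} yields $f = 0$. The identity $\Delta\CT f = f + \CG f$ from \eqref{eq:8:20191114}, together with $(I+\CG)f = 0$, reduces $v$ to a harmonic function in each $\Omega_\pm$ subject to the transmission conditions $\jmp{\rho v} = 0$ and $\jmp{\Bn\cdot\nabla v} = 0$ on $\Sigma$. Moreover, since $v = \vph_\infty(\wtd\CT_\infty f + c_f) + \vph_0\CT_0 f$, one has $v \equiv \wtd\CT_\infty f + c_f$ outside $B_{3R}$, so the pointwise bounds of Lemma \ref{lemm:3-ext} yield $|v(x) - c_f|\leq C|x|^{1-N}$ and $|\nabla v(x)|\leq C|x|^{-N}$ at infinity. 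Combined with $\nabla v \in L_q(\BR^N\setminus\Sigma)$ and the embedding $L_q\hookrightarrow L_2$ on bounded sets (which uses $q\geq 2$), this upgrades $\nabla v$ to $L_2(\BR^N\setminus\Sigma)$.

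Next I introduce $u := \rho v$. Since $\rho$ is piecewise constant and $\jmp{\rho v} = 0$, $u$ is continuous across $\Sigma$ and lies in $\wht H_q^1(\BR^N)$, while $\Delta u_\pm = 0$ in $\Omega_\pm$ and $\jmp{\rho^{-1}\Bn\cdot\nabla u} = 0$ on $\Sigma$. Integration by parts in each phase (the transmission condition cancels the $\Sigma$-contributions) gives
\begin{equation*}
(\rho^{-1}\nabla u,\nabla\vph)_{\BR^N\setminus\Sigma} = 0 \quad \text{for every compactly supported $\vph\in\wht H_{q'}^1(\BR^N)$.}
\end{equation*}
Since $q\geq 2\geq q'$, the Sobolev-type cutoff $\psi_R$ from the proof of Theorem \ref{theo:2-bdd_2}\,(2) has compact support and $\psi_R u\in\wht H_{q'}^1(\BR^N)$, so inserting it as a test function yields
\begin{equation*}
\int_{\BR^N\setminus\Sigma}\rho^{-1}|\nabla u|^2\psi_R\intd x
= -\int_{\BR^N\setminus\Sigma}\rho^{-1}u\,\nabla u\cdot\nabla\psi_R\intd x.
\end{equation*}

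Letting $R\to\infty$, the right-hand side is controlled by combining $|u|$ bounded near infinity (tending to $\rho_- c_f$), $|\nabla u|\leq C|x|^{-N}$, and the bound $|\nabla\psi_R|\leq C/(|x|\log|x|\log\log R)$ with support in $\{e^{\sqrt{\log R}}\leq |x|\leq R\}$ from \eqref{cut-off:1}:
\begin{equation*}
\Bigl|\int\rho^{-1}u\,\nabla u\cdot\nabla\psi_R\intd x\Bigr|
\leq \frac{C}{\log\log R}\int_{e^{\sqrt{\log R}}}^{R}\frac{dr}{r^{2}\log r}
\leq \frac{C'}{\log\log R} \longrightarrow 0,
\end{equation*}
while the left-hand side tends to $\int\rho^{-1}|\nabla u|^2\intd x$ by dominated convergence (using $\nabla u\in L_2$). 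Hence $\nabla u\equiv 0$, so $u$ is locally constant; continuity across $\Sigma$ makes it a single constant $c$, i.e.\ $v\equiv\rho_\pm^{-1}c$ on $\Omega_\pm$. The normalization \eqref{condi:average} together with $D_{R_3,R_4}\subset\Omega_-$ forces $\rho_-^{-1}c\,|D_{R_3,R_4}| = 0$, so $c = 0$, $\CT f = 0$, and Lemma \ref{lemm:A2-2} yields $f = 0$.

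The delicate step will be justifying the cutoff argument: this is precisely where the restriction $q\geq 2$ is essential (both for $\nabla u\in L_2$ via the local embedding $L_q\hookrightarrow L_2$, and for $\psi_R u$ to be an admissible test function in $\wht H_{q'}^1(\BR^N)$), and where the sharp pointwise decay of $\wtd\CT_\infty f$ supplied by Lemma \ref{lemm:3-ext} has to be combined carefully with the logarithmically slow growth of $\nabla\psi_R$ to kill the flux-at-infinity term.
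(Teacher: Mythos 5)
Your proof is correct and follows essentially the same energy-method strategy as the paper's: test the harmonic identity against a cutoff times $\rho\CT f$, use the transmission conditions to kill the interface terms, use the pointwise decay from Lemma~\ref{lemm:3-ext} to kill the flux at infinity, and invoke the normalization \eqref{condi:average} and Lemma~\ref{lemm:A2-2} to finish. The one (inessential) difference is your choice of the Sobolev-type cutoff $\psi_R$ borrowed from the proof of Theorem~\ref{theo:2-bdd_2}(2); the paper uses the plain scaling cutoff $\omega_L(x)=\omega(x/L)$, which already suffices here because the strong decay $|\wtd\CT_\infty f|\lesssim|x|^{1-N}$, $|\nabla\wtd\CT_\infty f|\lesssim|x|^{-N}$ makes the boundary term $O(L^{-1})$ -- the logarithmic cutoff is only needed when no pointwise decay is available.
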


\begin{proof}
Let $\omega(x)$ be an element of $C_0^\infty(\BR^N)$ satisfying
\begin{equation*}
\omega(x)=
\left\{\begin{aligned}
&1 && (|x|\leq 1), \\
&0 && (|x|\geq 2),
\end{aligned}\right.
\end{equation*}
and set $\omega_L(x)=\omega(x/L)$ for $L>0$ large enough.
Note that $\CT f\in H_{q',\lc}^2(\BR^N\setminus\Sigma)$ for $q'=q/(q-1)$ by $q\in[2,\infty)$.
Then, by the assumption $(I+\CG)f=0$, \eqref{eq:8:20191114}, and integration by parts,
\begin{align}
0
&=((I+\CG)f,\omega_L\rho\CT f)_{\BR^N\setminus\Sigma} 
=(\Delta\CT f,\omega_L\rho\CT f)_{\BR^N\setminus\Sigma} \nonumber \\ 
&=-(\nabla \CT f,\omega_L\rho\nabla \CT f)_{\BR^N\setminus\Sigma}
-(\nabla \CT f,(\nabla\omega_L) \rho\CT f)_{\BR^N}. \label{eq:1_20191130}
\end{align}
Since $L>0$ is large enough and $\spp (\nabla \omega)(\cdot\,/L)\subset D_{L,2L}=\{x\in\BR^N: L\leq |x|\leq 2L\}$,
one sees that on $\spp (\nabla\omega)(\cdot\,/L)$
\begin{equation*}
\CT f =\wtd \CT_\infty f+c_f, \quad 
\nabla\CT f =\nabla\wtd \CT_\infty f, \quad \rho=\rho_-.
\end{equation*}
We thus observe that
\begin{align*}
&|(\nabla \CT f,(\nabla\omega_L) \rho\CT f)_{\BR^N}|
=\frac{\rho_-}{L}\left|\int_{\BR^N}\nabla \wtd\CT_\infty f\cdot(\nabla\omega)\left(\frac{x}{L}\right)(\wtd\CT_\infty f+c_f) \intd x\right| \\
&\leq \frac{\rho_-}{L}
\int_{D_{L,2L}}|\nabla \wtd\CT_\infty f|\left|(\nabla\omega)\left(\frac{x}{L}\right)\right||\wtd\CT_\infty f|\intd x \\
&+\frac{\rho_-|c_f|}{L}\int_{D_{L,2L}}|\nabla \wtd\CT_\infty f|\left|(\nabla\omega)\left(\frac{x}{L}\right)\right| \intd x \\
&=:I_1+I_2.
\end{align*}
By Lemma \ref{lemm:3-ext},
\begin{align*}
I_1&\leq \frac{C}{L}\left(\sup_{x\in\BR^N}|\nabla\omega(x)|\right)\|f\|_{\CL_q(\BR^N)}^2\int_{D_{L,2L}}|x|^{-(2N-1)} \intd x, \\
I_2&\leq  \frac{C}{L}|c_f|\left(\sup_{x\in\BR^N}|\nabla\omega(x)|\right)
\|f\|_{\CL_q(\BR^N)} \int_{D_{L,2L}}|x|^{-N} \intd  x.
\end{align*}
Hence $\lim_{L\to\infty}(\nabla \CT f,(\nabla\omega_L)\rho \CT f)_{\BR^N}=0$.

Now we take the limit: $L\to\infty$ in \eqref{eq:1_20191130}, i.e. 
\begin{equation*}
\lim_{L\to\infty}\int_{\BR^N\setminus\Sigma}\rho(x)\left|[\nabla\CT f](x)\right|^2 \omega\left(\frac{x}{L}\right) \intd x=0.
\end{equation*}
Then the monotone convergence theorem yields
$$
\int_{\BR^N\setminus\Sigma}
\rho(x)\left|[\nabla\CT f](x)\right|^2=0.
$$
Thus $\nabla\CT f=0$ in $\BR^N\setminus\Sigma$, which implies that
there are constants $c_\pm$ so that $\rho\CT f=c_\pm$ in $\Omega_\pm$. 
Since $\jmp{\rho\CT f}=0$ on $\Sigma$, one has $c_+=c_-$. 
On the other hand, it holds by \eqref{condi:average} that
\begin{equation*}
c_+|D_{R_3,R_4}|=\int_{D_{R_3,R_4}}\rho\CT f\intd x=\rho_-\int_{D_{R_3,R_4}}\CT f\intd x=0,
\end{equation*}
which implies $c_+=c_-=0$. Hence $\CT f=0$,
which, combined with \ref{lemm:A2-2}, furnishes $f=0$. This completes the proof of the lemma.
\end{proof}

\def\thesection{B}
\renewcommand{\theequation}{B.\arabic{equation}}
\section{}

In this appendix, we prove for $\CR\Bf$ defined as \eqref{defi:Rf}

\begin{lemm}\label{lemm:apB-1}
Suppose that Assumption $\ref{assu:q}$ holds. Then $\CR\Bf\in\CL_q(\BR^N)$ for any $\Bf\in F_q(\BR^N\setminus\Sigma)$.
\end{lemm}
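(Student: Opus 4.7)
The plan is to verify the two conditions defining $\CL_q(\BR^N)$: that $\CR\Bf$ is supported in $D_{R_1,R_2}$ and that $(\CR\Bf,1)_{\BR^N}=0$. The support property is immediate from the definition \eqref{defi:Rf}: $\spp\nabla\psi_\infty\cup\spp\Delta\psi_\infty$ lies in $\{(2-2/3)R\leq |x|\leq (2-1/3)R\}$ and $\spp\nabla\psi_0\cup\spp\Delta\psi_0$ lies in $\{(3+1/3)R\leq|x|\leq(3+2/3)R\}$, both contained in $D_{R_1,R_2}\subset\BR^N\setminus B_R$, hence disjoint from $\Sigma$. Membership in $L_q(\BR^N)$ follows by combining Lemmas~\ref{lemm:1-ext} and~\ref{lemm:2-ext} with the uniform boundedness of the cut-offs and their derivatives. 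The heart of the matter is therefore to establish the zero-mean condition.

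The key idea is to exploit the identity $\CR\Bf=\Delta\CS\Bf-\dv\Bf$ in $\BR^N\setminus\Sigma$, which was used to define $\CR\Bf$. Fix $L>R_2$ so that $\spp\CR\Bf\subset B_L$, and write
\[
\int_{\BR^N}\CR\Bf\intd x=\int_{B_L\setminus\Sigma}\Delta\CS\Bf\intd x-\int_{B_L\setminus\Sigma}\dv\Bf\intd x.
\]
Applying the divergence theorem separately on $\Omega_+$ and on $B_L\setminus(\Omega_+\cup\Sigma)$, and accounting for the opposite orientations of $\Sigma$ as boundary of the two pieces, yields
\[
\int_{\BR^N}\CR\Bf\intd x=\int_{S_L}\bigl(\pd_n\CS\Bf-\Bn_L\cdot\Bf\bigr)\intd\sigma+\int_\Sigma\bigl(\jmp{\Bn\cdot\nabla\CS\Bf}-\jmp{\Bn\cdot\Bf}\bigr)\intd\sigma,
\]
where $\Bn_L$ denotes the outer unit normal on $S_L$.

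The $\Sigma$-integral vanishes by construction of $\CS$. On a neighborhood of $\Sigma\subset B_R$ one has $\vph_0\equiv 1$, $\vph_\infty\equiv 0$, $\psi_0\equiv 1$, and $\nabla\psi_0\equiv 0$. In particular $\CS_\infty(\vph_\infty\Bf)$ is smooth across $\Sigma$ and contributes no jump, while the third boundary condition of \eqref{eq:3-ext} applied to $V_0:=\CS_0(\vph_0\Bf)$ gives $\jmp{\Bn\cdot\nabla\CS\Bf}=\jmp{\Bn\cdot\nabla V_0}=\jmp{\Bn\cdot(\vph_0\Bf)}=\jmp{\Bn\cdot\Bf}$ on $\Sigma$. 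For the $S_L$-integral, on a neighborhood of $S_L$ (with $L$ chosen large) one has $\psi_\infty\equiv 1$, $\psi_0\equiv 0$, so $\CS\Bf$ agrees with $V_\infty:=\CS_\infty(\vph_\infty\Bf)$ near $S_L$, and $\vph_\infty\equiv 1$ there as well. Applying the divergence theorem to the equation $\Delta V_\infty=\dv(\vph_\infty\Bf)$ on $B_L$ gives $\int_{S_L}\pd_n V_\infty\intd\sigma=\int_{S_L}\Bn_L\cdot(\vph_\infty\Bf)\intd\sigma=\int_{S_L}\Bn_L\cdot\Bf\intd\sigma$, so the $S_L$-integral also vanishes, and the zero-mean condition follows.

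The main obstacle, such as it is, lies in correctly setting up the divergence theorem in the presence of the interior interface $\Sigma$ and in tracking which cut-offs are constant or vanishing on $\Sigma$ versus on $S_L$; no substantial estimate is required beyond what is already provided by Lemmas~\ref{lemm:1-ext} and~\ref{lemm:2-ext}.
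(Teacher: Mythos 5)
Your proof is correct. Both verifications (support in $D_{R_1,R_2}$ and vanishing mean) are sound, and the divergence-theorem bookkeeping, including the signs on $\Sigma$, is handled properly. Your argument, however, uses a genuinely different decomposition from the paper's. The paper does not start from $\CR\Bf=\Delta\CS\Bf-\dv\Bf$; instead it sets $\psi=1-\psi_\infty$ and algebraically rewrites $\CR\Bf$ as
\[
\CR\Bf=-\dv(\vph_0\Bf)-\dv\bigl((\nabla\psi)\CS_\infty(\vph_\infty\Bf)+\psi\nabla\CS_\infty(\vph_\infty\Bf)\bigr)
+\dv\bigl((\nabla\psi_0)\CS_0(\vph_0\Bf)+\psi_0\nabla\CS_0(\vph_0\Bf)\bigr),
\]
a sum of divergences of vector fields that all vanish near $S_{4R}$ (because each carries a factor of $\vph_0$, $\psi$, or $\psi_0$). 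Consequently, the paper's application of the divergence theorem on $B_{4R}\setminus\Sigma$ produces only $\Sigma$-integrals, which cancel via \eqref{eq:3-ext}. Your decomposition is the untouched defining identity $\CR\Bf=\Delta\CS\Bf-\dv\Bf$; since $\nabla\CS\Bf$ and $\Bf$ do not vanish near $S_L$, this produces a far-boundary term $\int_{S_L}(\pd_n\CS\Bf-\Bn_L\cdot\Bf)\,d\sigma$ that you must separately kill by a second divergence-theorem argument on $B_L$ applied to $\Delta V_\infty=\dv(\vph_\infty\Bf)$. Each route trades one kind of bookkeeping for another: the paper front-loads an algebraic identity so that only $\Sigma$ appears; you keep the identity raw and pay with an extra boundary-integral cancellation. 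The essential cancellation at $\Sigma$ via the jump condition of $\CS_0$ is common to both.
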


\begin{proof}
Since $\spp \CR\Bf\subset D_{R_1,R_2}$, 
it suffices to verify that $(\CR\Bf,1)_{\BR^N}=0$ for $\Bf\in F_q(\BR^N\setminus\Sigma)$ in what follows.
Let $\psi=1-\psi_\infty$. 
Similarly to \eqref{eq:7_20191115}, we write
\begin{align*}
\CR\Bf
&=\psi\Delta\CS_\infty(\vph_\infty\Bf)-\dv((\nabla\psi)\CS_\infty(\vph_\infty\Bf)
+\psi\nabla \CS_\infty(\vph_\infty\Bf)) \\
&-\psi_0\Delta\CS_0(\vph_0\Bf)+\dv((\nabla\psi_0)\CS_0(\vph_0\Bf)+\psi_0\nabla\CS_0(\vph_0\Bf))
\quad \text{in $\BR^N\setminus\Sigma$},
\end{align*}
which, combined with the facts:
\begin{equation*}
\psi\Delta\CS_\infty(\vph_\infty\Bf)=\psi\dv(\vph_\infty\Bf)=0, \quad
\psi_0\Delta\CS_0(\vph_0\Bf)
=\psi_0\dv(\vph_0\Bf) =\dv(\vph_0\Bf), 
\end{equation*}
furnishes that
\begin{align*}
\CR\Bf&=-\dv(\vph_0\Bf)-\dv((\nabla\psi)\CS_\infty(\vph_\infty\Bf)+\psi\nabla \CS_\infty(\vph_\infty\Bf)) \\
&+\dv((\nabla\psi_0)\CS_0(\vph_0\Bf)+\psi_0\nabla\CS_0(\vph_0\Bf)) \quad \text{in $\BR^N\setminus\Sigma$.}
\end{align*}
It then holds that
\begin{align*}
(\CR\Bf,1)_{B_{4R}\setminus\Sigma}&=
-(\dv(\vph_0\Bf),1)_{B_{4R}\setminus\Sigma} \\
&-(\dv((\nabla\psi)\CS_\infty(\vph_\infty\Bf)+\psi\nabla \CS_\infty(\vph_\infty\Bf)),1)_{B_{4R}\setminus\Sigma} \\
&+(\dv((\nabla\psi_0)\CS_0(\vph_0\Bf)),1)_{B_{4R}\setminus\Sigma}+(\dv(\psi_0\nabla\CS_0(\vph_0\Bf)),1)_{B_{4R}\setminus\Sigma} \\
&=:I_1+I_2+I_3+I_4.
\end{align*}
By Gauss's divergence theorem, we see that 
\begin{gather*}
I_1=-\int_\Sigma \jmp{\Bn\cdot\Bf}\intd\sigma, \quad 
I_2=I_3=0, \\
I_4=\int_\Sigma\psi_0\jmp{\Bn\cdot\nabla\CS_0(\vph_0\Bf)}\intd\sigma
=\int_\Sigma \vph_0 \jmp{\Bn\cdot \Bf}\intd\sigma
=\int_\Sigma \jmp{\Bn\cdot\Bf}\intd\sigma,
\end{gather*}
where $d\sigma$ is the surface element of $\Sigma$.
Hence $(\CR\Bf,1)_{\BR^N}=(\CR\Bf,1)_{B_{4R}\setminus\Sigma}=0$, 
which completes the proof. 
\end{proof}

\def\thesection{C}
\renewcommand{\theequation}{C.\arabic{equation}}
\section{}
In this appendix, we  prove Lemma \ref{lemm:6-ext}.
Similarly to Subsection \ref{subsec:5-2} and the appendix A,
it suffices to prove the following lemma.

\begin{lemm}\label{lemm:B1}
Suppose that Assumption $\ref{assu:q}$ holds.
Let $q\in(1,2)$ additionally and $f\in\CL_q(\BR^N)$ with $(I+\CG)f=0$. Then $f=0$.
\end{lemm}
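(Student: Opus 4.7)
The plan is to reduce Lemma \ref{lemm:B1} to Lemma \ref{lemm:A2} by showing that any $f\in\CL_q(\BR^N)$ with $q\in(1,2)$ that is annihilated by $I+\CG$ must in fact lie in $\CL_2(\BR^N)$; once this is established, Lemma \ref{lemm:A2} applied at $q=2$ (admissible since $r>N\geq 2$) forces $f=0$.

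The upgrade will be a Sobolev bootstrap driven by the identity $f=-\CG f$ together with the mapping property \eqref{eq:2-A}, namely $\CG\in\CL(\CL_p(\BR^N),H_p^1(\BR^N))$ for every exponent $p$ compatible with Assumption \ref{assu:q}. Applied at $p=q$, this gives $f\in H_q^1(\BR^N)$; combined with $\spp f\subset D_{R_1,R_2}$ and $(f,1)_{\BR^N}=0$, the Sobolev embedding furnishes $f\in\CL_{p_1}(\BR^N)$ for
\[
p_1 \;=\; \min\!\left(\frac{Nq}{N-q},\,r\right),
\]
the truncation at $r$ being inserted precisely so that $p_1$ still satisfies Assumption \ref{assu:q}: the upper bound $p_1\leq r$ is by construction, while $p_1\geq q\geq r/(r-1)$ yields $p_1'\leq r$ automatically. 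The uniqueness clauses in Lemmas \ref{lemm:3-ext} and \ref{lemm:4-ext} ensure that $\CT_\infty f$, $\CT_0 f$, and hence $\CG f$, are defined independently of the ambient $L$-space, so $(I+\CG)f=0$ persists after the upgrade.

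Iterating the same step with $p_1$ in place of $q$, I would obtain a sequence $p_0=q<p_1<p_2<\cdots$. A direct computation gives $p_{k+1}-p_k\geq p_k^2/(N-p_k)\geq q^2/N$ whenever $p_k<2$, while if the Sobolev gain first overshoots $r$ the truncation yields $p_{k+1}=r>2$ in one step; in either case finitely many iterations produce some $p_K\in[2,r]$. Compact support then embeds $\CL_{p_K}\hookrightarrow \CL_2$, and Lemma \ref{lemm:A2} closes the argument.

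The main obstacle I anticipate is bookkeeping rather than mathematics: one must verify at each step that the new exponent $p_k$ remains inside the admissible range $[r/(r-1),r]$ demanded by Assumption \ref{assu:q}, so that \eqref{eq:2-A} and the underlying Lemmas \ref{lemm:3-ext} and \ref{lemm:4-ext} continue to apply. The upper bound is enforced by the truncation at $r$, and the lower bound is preserved by monotonicity of the sequence. The conceptual content beyond this is the standard gain in integrability that $H^1$-regularity confers on a compactly supported function, which here is delivered by the one-derivative smoothing inherent in the potential-type operators $\CT_\infty$ and $\CT_0$ that build $\CG$.
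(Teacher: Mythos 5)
Your proof is correct, but it takes a genuinely different route from the paper's. The paper argues by duality: since $q\in(1,2)$, the conjugate exponent $q'=q/(q-1)$ lies in $(2,\infty)$, where Theorem~\ref{theo:final} has already been established in Case~1. From $(I+\CG)f=0$ and \eqref{eq:8:20191114}, $\CT f$ solves the homogeneous transmission problem; the paper then tests $\CT f$ against the strong solutions $w\in\wht H_{q'}^1\cap\wht H_{q'}^2$ of the problem with sources $\dv\Phi$, $\Phi\in C_0^\infty(\BR^N\setminus\Sigma)$, and runs the Hardy-type Sobolev cut-off argument of Subsection~\ref{subsec:3-3} to get $\rho\,\CT f\equiv c$, then $c=0$ by the normalization \eqref{condi:average}, and finally $f=0$ by Lemma~\ref{lemm:A2-2}. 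Your bootstrap instead exploits $f=-\CG f$ and the one-derivative smoothing \eqref{eq:2-A}, upgrading $f$ along a chain of admissible exponents until $f\in\CL_2$, where Lemma~\ref{lemm:A2} closes the argument. Both routes are sound. Yours is more self-contained (it never invokes Theorem~\ref{theo:final} at the dual exponent) but requires a verification you gesture at rather than carry out: Lemmas~\ref{lemm:3-ext} and~\ref{lemm:4-ext} do not literally state uniqueness clauses, so the assertion that $\wtd\CT_\infty f$, $\CT_0 f$, and hence $\CG f$ are independent of the ambient $\CL_p$-class needs to be justified either by noting that $\wtd\CT_\infty$ is given by an explicit potential-type formula (hence exponent-independent) and that $\CT_0$ inherits uniqueness from Theorem~\ref{theo:3-bdd}, or by a density-and-continuity argument on $\CL_q\cap\CL_{p_1}$. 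The paper's duality argument sidesteps this consistency bookkeeping, at the cost of re-running the cut-off machinery of Subsection~\ref{subsec:3-3}; your argument trades that for the (finite, elementary) Sobolev iteration. It is also worth noting explicitly in your write-up that the case $p_k\geq N$ is handled because then $H_{p_k}^1$ on the compact support of $f$ embeds into every $L_p$, $p<\infty$, so the truncated exponent jumps directly to $r>N\geq 2$; you mention the overshoot but not the borderline $p_k=N$.
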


\begin{proof}
Since $(I+\CG)f=0$, we have by \eqref{eq:8:20191114}
\begin{equation*}
\left\{\begin{aligned}
\Delta \CT f&=0 && \text{in $\BR^N\setminus \Sigma$,} \\
\jmp{\rho\CT f}&=0 && \text{on $\Sigma$,} \\
\jmp{\Bn\cdot\nabla\CT f}&=0 && \text{on $\Sigma$.}
\end{aligned}\right.
\end{equation*}
On the other hand,
one sees by Theorem \ref{theo:final}
that for any $\Phi\in C_0^\infty(\BR^N\setminus\Sigma)\subset F_{q'}(\BR^N\setminus\Sigma)$, $q'=q/(q-1)\in(2,\infty)$,
there exists $w\in \wht H_{q'}^1(\BR^N\setminus\Sigma)\cap \wht H_{q'}^2(\BR^N\setminus\Sigma)$ such that
\begin{equation*}
\left\{\begin{aligned}
\Delta w &= \dv\Phi && \text{in $\BR^N\setminus\Sigma$,} \\
\jmp{\rho w}&=0 && \text{on $\Sigma$,} \\
\jmp{\Bn\cdot\nabla w}&=0 && \text{on $\Sigma$.}  
\end{aligned}\right.
\end{equation*}
Similarly to the proof of uniqueness in Subsection \ref{subsec:3-3}, 
it holds that $\rho\CT f= c$ for some constant $c$.
One then concludes $c=0$ by \eqref{condi:average}, i.e. $\CT f=0$,
which, combined with Lemma \ref{lemm:A2-2} furnishes $f=0$.
This completes the proof of Lemma \ref{lemm:B1}.
\end{proof}


\end{document}